\theoremstyle{plain}
\newtheorem{theorem}{Theorem}[section]
\newtheorem{proposition}[theorem]{Proposition}
\newtheorem{corollary}[theorem]{Corollary}
\newtheorem{lemma}[theorem]{Lemma}  
\newcommand{\supp}{\mathop{\mathrm{supp}}\nolimits} 
\newcommand{\card}{\mathop{\mathrm{card}}\nolimits}
\numberwithin{equation}{section}  
\theoremstyle{definition}
\theoremstyle{remark}
\def\XXint#1#2#3{{\setbox0=\hbox{$#1{#2#3}{\int}$}
\vcenter{\hbox{$#2#3$}}\kern-.5\wd0}}
\title[Estimates for maximal Fourier multiplier operators]
{Estimates for maximal Fourier multiplier operators   
on $\Bbb R^2$ via square functions}  
\author{Shuichi Sato } 
\begin{document} 
\address{
Kanazawa Japan}
\email{shuichipm@gmail.com}

\begin{abstract} 
We consider certain Littlewood-Paley square functions on $\Bbb R^2$             and prove sharp estimates for them, from which we can deduce  
$L^p$ boundedness of 
maximal functions defined by Fourier multipliers of Bochner-Riesz type 
 on $\Bbb R^2$.  This is a generalization of a result due to A. Carbery 1983. 
\end{abstract}
  \thanks{2020 {\it Mathematics Subject Classification.\/}
  42B08,  42B15. 
  \endgraf
  {\it Key Words and Phrases.} Bochner-Riesz operator, square function, 
 Kakeya maximal function.}
\thanks{The author is partly supported
by Grant-in-Aid for Scientific Research (C) No. 20K03651, Japan Society for the  Promotion of Science.}

\maketitle 

\section{Introduction}\label{sec1}

Let  $I$ be a compact interval in $\Bbb R$ and $\psi$  a real valued function 
in $C^\infty(I)$. Let 
$a\in C_0^\infty(\Bbb R^2)$, $\supp (a)\subset I^\circ\times \Bbb R$, 
 where $I^\circ$ denotes the interior of $I$. 
Put 
\begin{equation}\label{e1.1}
\sigma_\lambda(\xi)=a(\xi)(\xi_2-\psi(\xi_1))_+^\lambda
\end{equation}
 for $\lambda>0$, where $r_+=\max(r,0)$ for $r\in \Bbb R$.  Define  
\begin{equation}\label{e1.2}
S^\lambda_Rf(x)=\int_{\Bbb R^2} \sigma_\lambda(R^{-1}\xi) 
\hat{f}(\xi)e^{2\pi i\langle x, \xi\rangle}\, d\xi, \quad R>0, 
\end{equation} 
where $f\in \mathscr{S}(\Bbb R^2)$ (the Schwartz space of infinitely 
differentiable, rapidly decreasing functions), and 
\begin{equation*} 
\hat{f}(\xi)=\mathscr F(f)(\xi) 
= \int_{\Bbb R^2} f(x)e^{-2\pi i\langle x, \xi\rangle}\, dx
\end{equation*} 
is the Fourier  transform on $\Bbb R^2$ with 
$\langle x, \xi\rangle=x_1\xi_1+x_2\xi_2$, 
$x=(x_1,x_2)$, $\xi=(\xi_1, \xi_2)$.   
Put $S^\lambda_*f=\sup_{R>0}|S^\lambda_Rf|$. 
\par 
We assume that 
\begin{enumerate} 
\item[$(A.1)$]  
$0\notin \supp(a)$, 
\item[$(A.2)$]   
$\psi(t)-t\psi'(t)\neq 0$ on $I$, which means that 
the curve $\Gamma=\{(t,\psi(t)): t\in I\}$ does not have a tangent line
 passing through the origin.  
\end{enumerate} 
 In this note we shall prove the following. 
\begin{theorem} \label{thm1.1} 
Suppose that $\psi''\neq 0$ on $I$. Let $\lambda>0$. 
Then, there exists a positive constant $C_\lambda$ such that  
\begin{equation*} 
\|S^\lambda_*f\|_4\leq C_\lambda\|f\|_4. 
\end{equation*} 
\end{theorem}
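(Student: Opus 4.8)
\smallskip
\noindent\emph{Proof proposal.}
The plan is to reduce the pointwise maximal bound to an $L^{4}$ estimate for an associated Littlewood--Paley square function, in the spirit of Carbery's treatment of the Bochner--Riesz maximal operator, and then to establish that square function estimate by a C\'ordoba-type argument exploiting the curvature hypothesis $\psi''\neq 0$ together with the $L^{2}(\Bbb R^2)$ boundedness (with only a logarithmic loss) of the Kakeya maximal operator in the plane.

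First I would pass from the maximal operator to square functions. Since $0\notin\supp(a)$ and $\supp(a)$ is compact, for $f\in\mathscr S(\Bbb R^2)$ the symbol $\sigma_\lambda(R^{-1}\xi)$ tends to $0$ in a dominated way both as $R\to0$ and as $R\to\infty$, so $S^\lambda_Rf(x)\to0$ at both ends; hence
\begin{equation*}
|S^\lambda_Rf(x)|^{2}=-2\re\int_R^\infty S^\lambda_tf(x)\,\overline{\partial_tS^\lambda_tf(x)}\,dt ,
\end{equation*}
and Cauchy--Schwarz followed by H\"older gives $\|S^\lambda_*f\|_4\le\sqrt2\,\|g_0f\|_4^{1/2}\|g_1f\|_4^{1/2}$, where
\begin{equation*}
g_0f=\Big(\int_0^\infty|S^\lambda_tf|^{2}\,\frac{dt}{t}\Big)^{1/2},\qquad
g_1f=\Big(\int_0^\infty|t\,\partial_tS^\lambda_tf|^{2}\,\frac{dt}{t}\Big)^{1/2}.
\end{equation*}
The generating symbol of $g_0$ is $\sigma_\lambda$, while that of $g_1$ is $-\eta\cdot\nabla\sigma_\lambda(\eta)=-(\eta\cdot\nabla a)(\eta)(\eta_2-\psi(\eta_1))_+^{\lambda}-\lambda(\eta_2-\eta_1\psi'(\eta_1))a(\eta)(\eta_2-\psi(\eta_1))_+^{\lambda-1}$. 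Writing $\eta_2-\eta_1\psi'(\eta_1)=(\eta_2-\psi(\eta_1))+(\psi(\eta_1)-\eta_1\psi'(\eta_1))$ and invoking $(A.2)$, the only genuinely singular contribution has a nonvanishing $C_0^\infty$ amplitude and is of Bochner--Riesz order $\lambda-1$. Thus it suffices to prove
\begin{equation*}
\|g_\mu f\|_4\le C_\mu\|f\|_4,\qquad g_\mu f=\Big(\int_0^\infty|S^\mu_tf|^{2}\,\frac{dt}{t}\Big)^{1/2},
\end{equation*}
for $\mu=\lambda$ and $\mu=\lambda-1$ (together with the harmless variants in which $a$ is replaced by another $C_0^\infty$ function).

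I claim this square function estimate holds whenever $\mu>-1/2$, which covers $\mu=\lambda$ for every $\lambda>0$ and $\mu=\lambda-1$ for $\lambda>1/2$; the remaining range $0<\lambda\le1/2$ for the $g_1$-term is reached by an additional step (iterating the identity above after first gaining a fixed fraction of a derivative, or linearizing $S^\lambda_*$ and interpolating in $\lambda$ with a crude endpoint near $\lambda=0$). To prove $\|g_\mu f\|_4\le C_\mu\|f\|_4$, decompose $\sigma_\mu=\sum_{j\ge0}\sigma_\mu^{(j)}$, where $\sigma_\mu^{(j)}$ is supported where $\eta_2-\psi(\eta_1)\sim2^{-j}$, so that $|\sigma_\mu^{(j)}|\sim2^{-j\mu}$ on a curved slab of thickness $2^{-j}$ about $\Gamma$. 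By $(A.2)$ the function $t\mapsto\xi_2/t-\psi(\xi_1/t)$ has only nondegenerate zeros for each fixed $\xi\neq0$, so the set of $t$ putting $\xi/t$ in that slab is an interval of length $\sim2^{-j}$; an application of Plancherel then yields the gain $\|g_\mu^{(j)}f\|_2\lesssim2^{-j(\mu+1/2)}\|f\|_2$. For the $L^{4}$ bound one further splits the $j$-th slab, using $\psi''\neq0$, into $\sim2^{j/2}$ plates on each of which the relevant arc of $\Gamma$ lies within $2^{-j}$ of a line segment.

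On the core estimate: write $\|g_\mu^{(j)}f\|_4^{2}=\|(g_\mu^{(j)}f)^{2}\|_2$ and dualize against $0\le w\in L^{2}$; expanding $|S^{\mu,(j)}_tf|^{2}$ and using Plancherel in both the dilation variable $t$ and in $x$ reduces the estimate to a bilinear sum over pairs of plates. The curvature $\psi''\neq0$ makes distinct plates essentially transversal at each frequency-separation scale, and the resulting geometric sum is summed exactly as in C\'ordoba's argument, i.e.\ through the $L^{2}(\Bbb R^2)$ boundedness of the Kakeya maximal operator over the $\sim2^{j/2}$ directions present; this gives $\|g_\mu^{(j)}f\|_4\lesssim2^{-j(\mu+1/2)}j^{c}\|f\|_4$, and summing over $j\ge0$ converges for every $\mu>-1/2$, proving $\|g_\mu f\|_4\le C_\mu\|f\|_4$ and hence the theorem. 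The step I expect to be the main obstacle is precisely this per-scale $L^{4}$ square function estimate: arranging the plate decomposition so that the curvature of $\Gamma$ can be exploited uniformly, and keeping the Kakeya/C\'ordoba loss polynomial in $j$ so that it is absorbed by the factor $2^{-j\mu}$ for all $\mu>0$ (respectively by $2^{-j(\mu+1/2)}$ for $\mu>-1/2$). A secondary technical point is handling the small-$\lambda$ regime in the reduction to $g_\mu$.
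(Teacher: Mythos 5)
Your core square-function machinery (dyadic decomposition in the distance to $\Gamma$, splitting the $2^{-j}$-slab into $\sim 2^{j/2}$ plates, C\'ordoba-type biorthogonality from $\psi''\neq 0$, and the planar Kakeya maximal bound with logarithmic loss over $\sim 2^{j/2}$ directions) is essentially the route the paper takes for its Theorems \ref{thm1.2} and \ref{thm1.3}, including the need to keep the loss polynomial in $j$ so that $\sum_j 2^{-j(\mu+1/2)}j^{c}$ converges for $\mu>-1/2$. The genuine gap is in your reduction of $S^\lambda_*$ to square functions. The identity $|S^\lambda_Rf|^2=-2\re\int_R^\infty S^\lambda_tf\,\overline{t\partial_tS^\lambda_tf}\,\frac{dt}{t}$ forces you to bound the square function at order $\lambda-1$, so your argument as written only covers $\lambda>1/2$: the square function of order $\mu$ is bounded on $L^4$ only for $\mu>-1/2$ (this threshold is sharp, as the $L^2$ computation already shows), and neither of your suggested patches closes the range $0<\lambda\le 1/2$. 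In particular there is no ``crude endpoint near $\lambda=0$'' on $L^4$ to interpolate with — boundedness of $S^\lambda_*$ on $L^4$ for $\lambda$ near $0$ is exactly what is being proved, and the only cheap endpoint available is $L^2$ (the paper's Theorem \ref{thm1.2+}), which after interpolation in $(\lambda,p)$ yields $L^p$, $2\le p<4$, statements rather than $L^4$ for all $\lambda>0$; iterating the fundamental-theorem identity does not gain order either.

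The paper closes this gap differently, losing only (slightly more than) $1/2$ rather than $1$ in the order: it uses the Stein--Weiss fractional-integration identity (Lemmas \ref{lem4.1} and \ref{lem4.3}, Corollaries \ref{cor4.2} and \ref{cor4.4}), which dominates the maximal operator of order $\lambda=\delta+\beta$, $\beta>1/2$, pointwise by $M\bigl(g_{\mathscr F^{-1}(\varphi^{\delta})}f\bigr)$ with $\delta>-1/2$, so that the square-function input needed is exactly the range $\mu>-1/2$ you can prove. The price of that reduction is a step your proposal does not address at all: the identity requires the multiplier to be of the form $\widetilde a(\xi)\,(\rho(\xi)-1)_+^{\lambda}$ (or $(1-\rho(\xi))_+^{\lambda}$) with $\rho$ \emph{homogeneous of degree one}, whereas $\xi_2-\psi(\xi_1)$ is not homogeneous. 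The paper constructs such a $\rho$ (e.g.\ $\rho(u,v)=u/\Psi^{-1}(u/v)$ with $\Psi(t)=t/\psi(t)$), using precisely the non-tangency hypothesis $(A.2)$ to make $\Psi$ invertible, and then shows $a(\xi)(\xi_2-\psi(\xi_1))_+^{\lambda}=\widetilde a(\xi)(\rho(\xi)-1)_+^{\lambda}$ with $\widetilde a\in C_0^\infty$ (equation \eqref{e4.1}). To repair your proof for all $\lambda>0$ you would need to replace your $g_0$--$g_1$ reduction by this homogenization-plus-fractional-integration argument (or some substitute losing at most $1/2+\varepsilon$ in the order); everything after that point in your outline then matches the paper.
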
  
This is a generalization of a result of Carbery \cite{Ca}; weighted 
estimates can be found in \cite{Ca2}. 
See Sogge \cite[Chap.2]{Sogg} for related results. 
We refer to \cite{Ca-Sj, Fe2, Sa} for background results.  
\par 
For the $L^2$ estimates we have the following. 
\begin{theorem} \label{thm1.2+} 
Let $\lambda>0$.  
Then,
\begin{equation*} 
\|S^\lambda_*f\|_2\leq C_\lambda\|f\|_2.  
\end{equation*} 
Here we need not assume that $\psi''\neq 0$ on $I$.  
\end{theorem}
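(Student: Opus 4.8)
**Proof proposal for Theorem 1.2+ ($L^2$ estimate for $S^\lambda_*$).**

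The plan is to reduce the maximal estimate to a square-function estimate via the standard device of controlling $\sup_R$ by an integral over the scaling parameter. Write $S^\lambda_R f = S^\lambda_1(f_R)$-type scaling and note that, by the fundamental theorem of calculus applied to $R \mapsto S^\lambda_R f(x)$,
\begin{equation*}
|S^\lambda_* f(x)|^2 \leq 2\int_0^\infty |S^\lambda_R f(x)|\,\Bigl|R\frac{\partial}{\partial R}S^\lambda_R f(x)\Bigr|\,\frac{dR}{R},
\end{equation*}
so that by Cauchy--Schwarz
\begin{equation*}
\|S^\lambda_* f\|_2^2 \leq 2\Bigl\|\Bigl(\int_0^\infty |S^\lambda_R f|^2\,\frac{dR}{R}\Bigr)^{1/2}\Bigr\|_2 \Bigl\|\Bigl(\int_0^\infty \bigl|R\partial_R S^\lambda_R f\bigr|^2\,\frac{dR}{R}\Bigr)^{1/2}\Bigr\|_2.
\end{equation*}
Each of the two factors is the $L^2$ norm of a continuous Littlewood--Paley square function whose multiplier is, respectively, $\sigma_\lambda(R^{-1}\xi)$ and $R\partial_R[\sigma_\lambda(R^{-1}\xi)] = -\langle R^{-1}\xi,(\nabla\sigma_\lambda)(R^{-1}\xi)\rangle$. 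So it suffices to show that each of these families defines a bounded square function on $L^2(\Bbb R^2)$.

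By Plancherel, the $L^2$ bound for a square function $f \mapsto (\int_0^\infty |m(R^{-1}\xi)\hat f(\xi)|^2\,dR/R)^{1/2}$ is exactly
\begin{equation*}
\sup_{\xi\neq 0}\int_0^\infty |m(R^{-1}\xi)|^2\,\frac{dR}{R} = \sup_{\xi\neq 0}\int_0^\infty |m(t\xi/|\xi|)|^2\,\frac{dt}{t},
\end{equation*}
so the whole theorem reduces to verifying that $m_1 = \sigma_\lambda$ and $m_2(\eta) = \langle \eta,\nabla\sigma_\lambda(\eta)\rangle$ each satisfy $\int_0^\infty |m_j(t\omega)|^2\,dt/t \leq C_\lambda$ uniformly in the direction $\omega \in S^1$. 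Because $a$ is compactly supported in $\{\xi_1\in I^\circ\}$ and $0\notin\supp(a)$ by $(A.1)$, for fixed $\omega$ the ray $t\mapsto t\omega$ meets $\supp(\sigma_\lambda)$ in a bounded set of $t$ bounded away from $0$, so only the behaviour near the curve $\xi_2 = \psi(\xi_1)$ matters. Near that curve, along the ray one has $(\xi_2-\psi(\xi_1))_+ \sim c(\omega)(t - t_0(\omega))_+$ where $t_0(\omega)$ is the parameter at which the ray crosses $\Gamma$ — and here hypothesis $(A.2)$ enters: it guarantees the ray is genuinely transverse to $\Gamma$ (the curve has no tangent through the origin), so $c(\omega)$ stays bounded below and the crossing is a simple first-order zero. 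Consequently $|m_1(t\omega)|^2 \sim (t-t_0)_+^{2\lambda}$ is locally integrable against $dt/t$ for every $\lambda > 0$, uniformly in $\omega$; differentiating brings down one power, so $|m_2(t\omega)|$ behaves like $(t-t_0)_+^{\lambda-1}$ plus an $a$-derivative term of order $(t-t_0)_+^\lambda$, and $\int (t-t_0)_+^{2\lambda-2}\,dt$ still converges near $t_0$ precisely because $\lambda>0$. This is where one uses that no curvature assumption is needed — only transversality, i.e. $(A.2)$.

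The main obstacle is the uniformity in $\omega$ of the two one-dimensional integrals, especially for the differentiated multiplier $m_2$: one must check that $t_0(\omega)$, the coefficient $c(\omega)$, and the $C^\infty$ data stay in a compact, nondegenerate regime as $\omega$ ranges over the (compact) set of directions that actually hit $\supp(\sigma_\lambda)$, and that the contribution of $\partial_t[a(t\omega)]$ near $t_0$ does not spoil convergence. This is handled by a compactness argument: parametrize the relevant directions, use $(A.1)$ and $(A.2)$ together with the smoothness of $\psi$ and $a$ to get locally uniform comparisons $(\xi_2-\psi(\xi_1))_+ \asymp (t-t_0(\omega))_+$ with constants controlled on the compact parameter set, and then the required estimate $\int_0^\infty |m_j(t\omega)|^2\,dt/t \leq C_\lambda$ follows from the elementary bound $\int_0^\delta s^{2\lambda-2}\,ds < \infty$ for $\lambda>0$ together with the boundedness of the region of integration. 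Note that $\psi''\neq 0$ plays no role here, which is exactly why Theorem 1.2+ needs no curvature hypothesis, in contrast to Theorem 1.1.
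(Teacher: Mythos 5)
Your reduction breaks down at the very last step, and the failure is quantitative: after the fundamental-theorem-of-calculus plus Cauchy--Schwarz step, the second square function has multiplier $m_2(\eta)=\langle\eta,\nabla\sigma_\lambda(\eta)\rangle = \langle\eta,\nabla a(\eta)\rangle(\eta_2-\psi(\eta_1))_+^{\lambda}+\lambda\, a(\eta)\,(\eta_2-\eta_1\psi'(\eta_1))\,(\eta_2-\psi(\eta_1))_+^{\lambda-1}$. By $(A.2)$ the coefficient $\eta_2-\eta_1\psi'(\eta_1)$ is bounded \emph{away from zero} near $\Gamma$ on $\supp(a)$, so there is no cancellation: along a ray, $|m_2(t\omega)|\sim c\,(t-t_0)_+^{\lambda-1}$ with $c>0$, and $\int_{t_0}^{t_0+\epsilon}(t-t_0)^{2\lambda-2}\,dt$ converges only when $2\lambda-2>-1$, i.e.\ $\lambda>1/2$. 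Your claimed elementary bound ``$\int_0^\delta s^{2\lambda-2}\,ds<\infty$ for $\lambda>0$'' is false in the range $0<\lambda\leq 1/2$, which is precisely the range that makes the theorem nontrivial. So the FTC/Cauchy--Schwarz scheme, as written, proves the statement only for $\lambda>1/2$ and cannot be patched within that framework, since the divergence of $\int_0^\infty|m_2(t\omega)|^2\,dt/t$ is genuine.

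The paper avoids this loss with the Stein--Weiss fractional-integration device rather than differentiation in $R$. First, condition $(A.2)$ is used to build a gauge $\rho$, homogeneous of degree one on a cone, with $\rho=1$ on $\Gamma$, so that $a(\xi)(\xi_2-\psi(\xi_1))_+^\lambda=\widetilde a(\xi)(\rho(\xi)-1)_+^{\lambda}$ (or $(1-\rho(\xi))_+^{\lambda}$), putting the multiplier in dilation-compatible form. Then Lemma \ref{lem4.1}/\ref{lem4.3} writes $(R-\rho)_+^{\delta+\beta}$ as a fractional average of $(s-\rho)_+^{\delta}$ over $s<R$, and Corollaries \ref{cor4.2}/\ref{cor4.4} give the pointwise bound $S_*^{\delta+\beta}f\leq C\,M\bigl(g_{\mathscr F^{-1}(\varphi^\delta)}f\bigr)$ with any $\beta>1/2$, $\delta>-1/2$; choosing $\delta=\lambda-\beta$ covers every $\lambda>0$ at the price of a square function of \emph{negative} order $\delta$. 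The $L^2$ bound for that square function (Theorem \ref{thm1.7+}) is not a one-line Plancherel-along-rays computation either, because the multiplier is unbounded at $\Gamma$ when $\delta<0$: it is obtained by decomposing dyadically in the distance to the curve and summing the $\delta^{1/2}$-gain estimate of Theorem \ref{thm1.5+}, whose proof (Section \ref{sec6}, via Lemmas \ref{lem3.4} and \ref{lem3.5}) is where your ray-transversality observation actually lives, but equipped with the crucial smallness in $\delta$ that makes the summation over dyadic shells possible. In short, your first factor and the uniform-in-$\omega$ Plancherel reduction are fine, but covering $0<\lambda\leq 1/2$ requires replacing $R\partial_R$ by the $\beta$-smoothing average and proving boundedness of square functions of order down to $-1/2$, which is the real content of the paper's argument.
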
  
Interpolating between the estimates in Theorems \ref{thm1.1} and 
\ref{thm1.2+}, we have the following. 
\begin{corollary}\label{cor1.3} 
Let $\psi$ satisfy the conditions in Theorem $\ref{thm1.1}$. 
Let $\lambda>0$.  
Then 
\begin{equation*} 
\|S^\lambda_*f\|_p\leq C_\lambda\|f\|_p, \quad 2\leq p\leq 4. 
\end{equation*} 
\end{corollary}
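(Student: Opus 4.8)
The plan is to obtain the $L^p$ bound for $2\le p\le4$ by interpolating the $L^4$ estimate of Theorem~\ref{thm1.1} against the $L^2$ estimate of Theorem~\ref{thm1.2+}. The one point that needs attention is that $S^\lambda_*$ is a maximal --- hence merely sublinear --- operator, so the Riesz--Thorin theorem does not apply to it directly; I would first linearize.

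To linearize, fix a measurable function $R\colon\mathbb R^2\to(0,\infty)$ and set
\[
T_Rf(x)=S^\lambda_{R(x)}f(x),\qquad f\in\mathscr S(\mathbb R^2).
\]
Since $S^\lambda_\rho$ is linear for each fixed $\rho>0$, the operator $T_R$ is linear in $f$, and the pointwise bound $|T_Rf(x)|\le S^\lambda_*f(x)$ gives $\|T_Rf\|_p\le\|S^\lambda_*f\|_p$ for every $p$. In the other direction, for $f\in\mathscr S(\mathbb R^2)$ the function $\rho\mapsto S^\lambda_\rho f(x)$ is continuous and tends to $0$ as $\rho\to0$ and as $\rho\to\infty$ (using $0\notin\supp(a)$ together with the compact support of $\sigma_\lambda$ and the rapid decay of $\hat f$); hence $S^\lambda_*f(x)=\sup_{\rho\in\mathbb Q_{>0}}|S^\lambda_\rho f(x)|$, and if, for a finite set $Q\subset\mathbb Q_{>0}$, one lets $R$ be the function selecting at each $x$ some $\rho\in Q$ that maximizes $|S^\lambda_\rho f(x)|$, then $|T_Rf|=\max_{\rho\in Q}|S^\lambda_\rho f|$, which increases to $S^\lambda_*f$ pointwise as $Q\uparrow\mathbb Q_{>0}$. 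By monotone convergence,
\[
\|S^\lambda_*f\|_p=\sup_R\|T_Rf\|_p,
\]
the supremum being over all such $R$. It therefore suffices to bound $\|T_Rf\|_p$ by a constant multiple of $\|f\|_p$ uniformly in $R$.

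Now Theorem~\ref{thm1.2+} gives $\|T_Rf\|_2\le\|S^\lambda_*f\|_2\le C_\lambda\|f\|_2$, and Theorem~\ref{thm1.1}, whose hypotheses are in force, gives $\|T_Rf\|_4\le\|S^\lambda_*f\|_4\le C_\lambda\|f\|_4$, both uniformly in $R$. Since $T_R$ is linear, Riesz--Thorin interpolation yields $\|T_Rf\|_p\le C_\lambda\|f\|_p$ for $1/p=(1-\theta)/2+\theta/4$, $0\le\theta\le1$, again uniformly in $R$; taking the supremum over $R$ and then a routine density argument complete the proof. Equivalently, one may bypass the linearization by regarding $f\mapsto(S^\lambda_\rho f)_{\rho>0}$ as a single linear map into the vector-valued Lebesgue space $L^p\big(\mathbb R^2;C_b((0,\infty))\big)$, where $C_b((0,\infty))$ denotes the bounded continuous functions on $(0,\infty)$; Theorems~\ref{thm1.1} and \ref{thm1.2+} are precisely its boundedness at $p=4$ and $p=2$, and since $[L^{p_0}(B),L^{p_1}(B)]_\theta=L^{p_\theta}(B)$ for any Banach space $B$, the intermediate bounds follow at once.

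The only genuine obstacle is the one just discussed: because the endpoint inequalities are stated for a sublinear maximal operator, one must linearize (or pass to a vector-valued formulation) before interpolating; once that is set up the argument is entirely routine, and the resulting constant depends on $\lambda$ only through the two endpoint constants.
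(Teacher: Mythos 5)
Your proposal is correct and follows the same route as the paper, which simply interpolates the $L^2$ bound of Theorem \ref{thm1.2+} with the $L^4$ bound of Theorem \ref{thm1.1}; the paper leaves the interpolation step unexplained, and your linearization of the maximal operator (or, equivalently, the vector-valued reformulation) is the standard and correct way to justify it. No issues.
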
 
\par 
Let $\Phi\in C_0^\infty(\Bbb R)$, 
$\supp (\Phi)\subset [-1, 1]$. 
 Let 
$b\in C_0^\infty(\Bbb R^2)$, $\supp (b)\subset I^\circ\times \Bbb R$.  
We assume that $b=0$ near the origin.  
Put 
\begin{equation}\label{e1.3}
\phi(\xi)=\phi^{(\delta)}(\xi)=b(\xi)\Phi(\delta^{-1}(\xi_2-\psi(\xi_1))) 
\end{equation}  
with a small number $\delta \in (0, 1/2]$.     
For appropriate functions $h$ and $f$, define the Littlewood-Paley function 
\begin{equation*} 
g_h(f)=\left(\int_0^\infty |h_t*f|^2\, \frac{dt}{t} \right)^{1/2}, 
\quad h_t(x)=t^{-2}h(t^{-1}x).
\end{equation*}  
\par 
 Let $\eta=\mathscr F^{-1}(\phi^{(\delta)})$. 
To prove Theorems \ref{thm1.1} and \ref{thm1.2+}, we show the following two 
theorems on $g_\eta$.   
\begin{theorem} \label{thm1.2} 
Let $\psi$ be as in Theorem $\ref{thm1.1}$. 
Then we have 
\begin{equation*} 
\|g_\eta(f)\|_4 \leq C\delta^{1/2}\left(\log\frac{1}{\delta}\right)^\tau 
\|f\|_4 
\end{equation*} 
for some $\tau>0$, where the constant $C$ is independent of $\Phi$ provided that $\|(d/dr)^m\Phi\|_\infty \leq 1$ for $0\leq m\leq 3$.     
\end{theorem}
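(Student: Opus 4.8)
The plan is to exploit the localization of $\phi^{(\delta)}$ to a $\delta$-neighborhood of the parabola-like curve $\Gamma$ and reduce the $L^4$ bound to an $L^2$ orthogonality estimate combined with a Kakeya-type maximal bound, following the Córdoba/Carbery philosophy. Since $|h_t*f|^2$ appears integrated against $dt/t$, I would first discretize: the essential scales are $t\sim 2^j$, and for each fixed $t$ the multiplier $\phi^{(\delta)}(t\xi)$ (or its dyadic analogue) is supported where $\xi$ lies in a $\delta/t$-tube-family around the dilated curve. Because $\psi''\neq 0$, arclength decomposition of $\Gamma$ into $\sim \delta^{-1/2}$ arcs of length $\delta^{1/2}$ makes each piece essentially a rectangle $R_\nu$ of dimensions $\delta^{1/2}\times\delta$; the corresponding pieces $\phi_\nu$ of $\phi$ have Fourier supports with bounded overlap, so $\|g_\eta(f)\|_2^2 \lesssim \delta \|f\|_2^2$ by Plancherel and the disjointness (this is the $\delta^{1/2}$ in $L^2$, which is the expected gain and is essentially sharp).

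Next, to get $L^4$, I would square: $\|g_\eta(f)\|_4^2 = \big\| \sum_\nu \int |{(\phi_\nu)}_t * f|^2\,dt/t \big\|_2$ after expanding, and the cross terms $\eta_\nu * f \cdot \overline{\eta_{\nu'}*f}$ have Fourier support in $R_\nu + (-R_{\nu'})$. The curvature hypothesis $\psi''\neq 0$ guarantees that as $(\nu,\nu')$ ranges over pairs at a fixed "angular separation" $\sim 2^k\delta^{1/2}$, these sum-sets have bounded overlap (a standard consequence of the transversality of tangent lines to a curved arc), so one can apply Plancherel again in the frequency variable, at the cost of a factor that is summable in $k$ up to the $\log(1/\delta)$ loss — this accounts for the power $\tau$. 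The remaining ingredient is that the diagonal-ish contribution, where $\nu\approx\nu'$, is controlled by a square function associated to the single-scale pieces, which by the almost-orthogonality and a weighted (Fefferman-Stein / Kakeya) estimate for the maximal function over the rectangles $R_\nu$ costs at most another logarithmic factor — here the $\|(d/dr)^m\Phi\|_\infty\le 1$ hypothesis is used to keep the kernels $\eta_\nu$ uniformly controlled (their $L^1$ norms and tails are bounded independently of $\Phi$), so all constants are $\Phi$-independent.

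Concretely, the steps in order: (i) reduce to a single dyadic $t\sim 1$ after a change of variables, using the scaling structure of $g_\eta$; (ii) decompose $\phi = \sum_\nu \phi_\nu$ by arclength along $\Gamma$ and record the geometry of the $R_\nu$ and of $R_\nu - R_{\nu'}$ using $\psi''\neq 0$ and $(A.1)$–$(A.2)$; (iii) prove the $L^2$ bound $\delta^{1/2}$ by orthogonality; (iv) expand the $L^4$ norm, organize pairs by angular separation $2^k\delta^{1/2}$, apply bounded-overlap + Plancherel at each level $k$; (v) sum over $k$, picking up $\log(1/\delta)$, and handle the near-diagonal term by the Kakeya/Nikodym maximal bound (in $\Bbb R^2$ this is the elementary $\log$-loss bound for the maximal function over arbitrarily-oriented $\delta^{1/2}\times\delta$ rectangles); (vi) track constants to confirm $\Phi$-independence. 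The main obstacle I anticipate is step (iv)–(v): making the bounded-overlap counting for the difference sets $R_\nu - R_{\nu'}$ quantitative enough that the per-level bound is $O(\delta^{1/2})$ with only polynomially-in-$\log(1/\delta)$ many effective levels, and correctly interfacing this with the vector-valued (square-function-in-$t$) nature of the estimate rather than a fixed-scale one; getting the exponent $\tau$ sharp is secondary, but the logarithmic bookkeeping between the $t$-integral, the $\nu$-sum, and the $k$-sum is where the real care is needed.
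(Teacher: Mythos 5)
Your outline follows the same general philosophy as the paper's proof (Cordoba--Carbery: decomposition of $\phi^{(\delta)}$ into $\sim\delta^{-1/2}$ pieces along $\delta^{1/2}$-intervals in $\xi_1$, a biorthogonality step in the fourth-power expansion resting on $\psi''\neq 0$, and then a Kakeya-type maximal estimate over $\sim\delta^{-1/2}$ directions combined with weighted Littlewood--Paley inequalities via duality). But there is a genuine gap in your treatment of the dilation parameter $t$, which is exactly where the paper has to work hardest. Step (i), ``reduce to a single dyadic $t\sim1$ by a change of variables,'' is not available: $\phi^{(\delta)}$ is supported in a $\delta$-neighborhood of the fixed compact curve $\Gamma$ and is not homogeneous, so the square function genuinely integrates over all $t>0$; in the expansion of $\|g_\eta(f)\|_4^4$ one meets pairs of scales $(t,u)$ that are neither comparable nor strongly separated, and the paper disposes of these by first splitting $(0,\infty)$ into $L\sim\log(1/\delta)$ classes $W_k$ so that within a class either $u<t<2u$ or $u<2\delta t$ (the sets $\Xi$, $\Lambda$), at the cost of one logarithm. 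Your plan never confronts this cross-scale interaction.

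More seriously, the mechanism producing the factor $\delta^{1/2}$ is misidentified. Bounded overlap of the Fourier supports of the pieces $\phi_\nu$ at a fixed $t$ gives no smallness at all; the gain comes from the $t$-integral: for each fixed $\xi$ (and, in the $L^4$ argument, for each fixed slab $P^{\ell}_h$ and dyadic block) the set of $t$ with $t\xi\in\supp(\phi^{(\delta)})$ has measure $O(\delta)$ with respect to $dt/t$. This rests on the transversality hypothesis $(A.2)$ --- the $\delta$-neighborhood of $\Gamma$ over $I'$ is contained in $\bigcup\{s\Gamma:|1-s|\le C\delta\}$ and distinct dilates $s\Gamma$ are pairwise disjoint (the paper's Lemmas 3.3--3.6, and Section 6 for the $L^2$ case) --- an input completely absent from your outline, which invokes only curvature. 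Without it, neither your step (iii) ($L^2$ bound with the $t$-integral) nor the claimed per-level $O(\delta^{1/2})$ bound in steps (iv)--(v) follows; curvature enters only in the biorthogonality step (the paper's inequality \eqref{e3.5}, via \cite[Lemma 5.3]{Sa}). A minor further correction: the maximal operator over arbitrarily oriented $\delta^{1/2}\times\delta$ rectangles admits no logarithmic bound; what is used is the maximal operator over a fixed set of $N\sim\delta^{-1/2}$ directions, with the Str\"omberg--Katz $(\log N)^{\alpha}$ bound on $L^2$, fed into a weighted duality argument (Lemma \ref{lem2.2}, Proposition \ref{prop3.7}) rather than applied directly.
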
 
\begin{theorem} \label{thm1.5+} 
Let $\psi$ be as in Theorem $\ref{thm1.2+}$. 
Then 
\begin{equation*} 
\|g_\eta(f)\|_2 \leq C\delta^{1/2}\|\Phi\|_\infty\|f\|_2,  
\end{equation*} 
 where the constant $C$ is independent of $\Phi$.     
\end{theorem}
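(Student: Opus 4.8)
The plan is to pass to the Fourier transform side, where the estimate collapses to a one-dimensional sublevel set computation. Since $\eta=\mathscr F^{-1}(\phi^{(\delta)})$ we have $\widehat{\eta_t*f}(\xi)=\hat\eta(t\xi)\hat f(\xi)=\phi^{(\delta)}(t\xi)\hat f(\xi)$, and Plancherel's and Tonelli's theorems give
\[
\|g_\eta(f)\|_2^2=\int_{\Bbb R^2}|\hat f(\xi)|^2\left(\int_0^\infty\bigl|\phi^{(\delta)}(t\xi)\bigr|^2\,\frac{dt}{t}\right)d\xi ,
\]
so it suffices to prove the uniform bound
\[
\sup_{\xi\neq 0}\int_0^\infty\bigl|\phi^{(\delta)}(t\xi)\bigr|^2\,\frac{dt}{t}\le C\delta\,\|\Phi\|_\infty^2 .
\]

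For this, observe first that by \eqref{e1.3} and $|\Phi|\le\|\Phi\|_\infty\mathbf{1}_{[-1,1]}$ one has $\bigl|\phi^{(\delta)}(t\xi)\bigr|\le\|b\|_\infty\|\Phi\|_\infty$ when $t\in E_\xi:=\{t>0:\ t\xi\in\supp(b),\ |t\xi_2-\psi(t\xi_1)|\le\delta\}$, and $\phi^{(\delta)}(t\xi)=0$ otherwise; thus the matter reduces to $\int_{E_\xi}\frac{dt}{t}\le C\delta$ with $C$ independent of $\xi$. I will use two features of $\supp(b)$: it is bounded and it stays away from the origin, so there are constants $0<c_1\le M$, depending only on $b$, with $c_1\le t|\xi|\le M$ for every $t\in E_\xi$. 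Hence $E_\xi$ is contained in the bounded interval $K_\xi:=\{t>0:\ t\xi_1\in I\}\cap[\,c_1|\xi|^{-1},\,M|\xi|^{-1}\,]$, on which $t^{-1}\le c_1^{-1}|\xi|$.

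The key is a derivative identity. Put $F(t)=t\xi_2-\psi(t\xi_1)$ and $G(t)=F(t)/t$ for $t>0$ with $t\xi_1\in I$; then
\[
G'(t)=\frac{\psi(t\xi_1)-t\xi_1\,\psi'(t\xi_1)}{t^{2}} .
\]
By hypothesis $(A.2)$ and compactness of $I$ there is $c_0>0$ with $|\psi(u)-u\psi'(u)|\ge c_0$ for all $u\in I$; therefore $G'$ never vanishes on the interval $\{t>0:\ t\xi_1\in I\}$, so $G$ is strictly monotone there, and on $K_\xi$ one has $|G'(t)|\ge c_0 t^{-2}\ge c_0M^{-2}|\xi|^2$. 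For $t\in E_\xi$ we get $|G(t)|=|F(t)|/t\le\delta/t\le c_1^{-1}\delta|\xi|$, so $E_\xi\subset\{t\in K_\xi:\ |G(t)|\le c_1^{-1}\delta|\xi|\}$, which by the monotonicity of $G$ is a single subinterval of $K_\xi$ of length at most $(2c_1^{-1}\delta|\xi|)/(c_0M^{-2}|\xi|^2)=2M^2(c_0c_1)^{-1}\delta|\xi|^{-1}$. Multiplying by $t^{-1}\le c_1^{-1}|\xi|$ yields $\int_{E_\xi}\frac{dt}{t}\le 2M^2(c_0c_1^{2})^{-1}\delta$, and with the two displayed identities this finishes the proof.

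The main — and essentially the only — obstacle is the lower bound for $|G'|$. The plain derivative $F'(t)=\xi_2-\xi_1\psi'(t\xi_1)$ may vanish on a whole $t$-interval, which is exactly why the hypothesis $\psi''\neq 0$ is neither assumed nor available here; differentiating $F(t)/t$ instead brings out the quantity $\psi-u\psi'$ that $(A.2)$ controls. Everything else is bookkeeping with the scaling $t\sim|\xi|^{-1}$ dictated by $\supp(b)$, and it is worth noting that the argument never divides by $\xi_1$, so it is uniform in $\xi\neq 0$ even when $\xi_1=0$, where $(A.2)$ reads simply $\psi(0)\neq 0$.
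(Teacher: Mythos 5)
Your proof is correct, and it begins exactly as the paper's does: Plancherel reduces the theorem to the uniform bound $\sup_{\xi}\int_0^\infty|\phi^{(\delta)}(t\xi)|^2\,\frac{dt}{t}\le C\delta\|\Phi\|_\infty^2$. Where you diverge is in how that one-dimensional estimate is obtained. The paper invokes its Lemmas 3.4 and 3.5 (the $\delta$-neighbourhood of $\Gamma$ is covered by the dilates $s\Gamma$ with $|1-s|\le B_1\delta$, and distinct dilates are disjoint), concluding that the admissible $t$ on a fixed ray lie in a multiplicative interval of ratio $(1+B_1\delta)/(1-B_1\delta)$, whence $\int dt/t\le\log\frac{1+B_1\delta}{1-B_1\delta}\le C\delta$. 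You instead differentiate $G(t)=(t\xi_2-\psi(t\xi_1))/t$ and read off $|G'(t)|\ge c_0t^{-2}$ from $(A.2)$, turning the matter into an elementary sublevel-set bound on the interval $K_\xi$ where $t\sim|\xi|^{-1}$. The two routes rest on the same geometric fact --- $(A.2)$ makes $\Gamma$ transverse to rays through the origin, which is precisely why your $G(t)=\xi_2-\xi_1\Psi_*(t\xi_1)$ is strictly monotone and why the paper's dilates $s\Gamma$ are pairwise disjoint --- but yours is self-contained and quantitative in one step, while the paper reuses lemmas it needs elsewhere (Lemmas 3.3 and 3.6, Theorem \ref{thm1.2}), so neither is globally more economical. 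Your closing remark is exactly the right point: $F'(t)=\xi_2-\xi_1\psi'(t\xi_1)$ can vanish on a whole interval when $\psi''$ is allowed to vanish, so one must divide by $t$ first to bring out $\psi(u)-u\psi'(u)$; and your argument is indeed uniform at $\xi_1=0$, where $(A.2)$ reduces to $\psi(0)\neq0$. The only cosmetic remark is that your final constant contains $\|b\|_\infty^2$ and the geometry of $\supp(b)$, which is permitted since the theorem only requires $C$ to be independent of $\Phi$.
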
 

These results can be used to prove the following.   
\begin{theorem} \label{thm1.3} 
Let $r, \lambda, \theta\in \Bbb R$. Define 
$\varphi_{\lambda,\theta}(r)=r^\lambda(\log(2+r^{-1}))^{-\theta}$ 
if $r>0$ and  $\varphi_{\lambda,\theta}(r)=0$ if $r\leq 0$.  
Let $\psi$ be as in Theorem $\ref{thm1.1}$ and 
let $b$ be as in \eqref{e1.3}. Put  
$\Psi_{\lambda,\theta}(\xi)
=b(\xi)\varphi_{\lambda,\theta}(\xi_2-\psi(\xi_1))$. 
Let $\tau$ be as in Theorem $\ref{thm1.2}$. Then  we have 
\begin{equation*} 
\|g_{\mathscr{F}^{-1}(\Psi_{\lambda, \theta})}(f)\|_4 \leq C_{\lambda,\theta}
\|f\|_4    
\end{equation*} 
if $\lambda=-1/2$ and $\theta>\tau+1;$ 
this is also valid for every $\theta\in \Bbb R$ when $\lambda>-1/2$. 
\end{theorem}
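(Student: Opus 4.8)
The plan is to expand $\varphi_{\lambda,\theta}$ dyadically in the variable $r=\xi_2-\psi(\xi_1)$, to match each dyadic block with a multiplier as in \eqref{e1.3} so that Theorem \ref{thm1.2} applies to it, and then to sum up; the exponent conditions in the statement will turn out to be exactly those making the resulting series of constants converge.

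First I would fix $\Phi_0\in C_0^\infty(\Bbb R)$ with $\supp(\Phi_0)\subset[1/4,1]$ and $\sum_{j\in\Bbb Z}\Phi_0(2^jr)=1$ for all $r>0$, and set $\delta_j=2^{-j}$ and $\chi_0(r)=1-\sum_{j\ge1}\Phi_0(2^jr)$ for $r>0$. Then $\chi_0$ vanishes on $(0,1/4)$ and equals $1$ on $[1/2,\infty)$, so $\varphi_{\lambda,\theta}\chi_0$ (which is $0$ for $r<1/4$) extends to a function in $C^\infty(\Bbb R)$. Putting $c_j=\delta_j^{\lambda}\big(\log(2+\delta_j^{-1})\big)^{-\theta}$ and
\[
\Phi^{(j)}(s)=s^{\lambda}\Big(\frac{\log(2+(\delta_js)^{-1})}{\log(2+\delta_j^{-1})}\Big)^{-\theta}\Phi_0(s),
\]
a one-line computation gives $\varphi_{\lambda,\theta}(r)\Phi_0(2^jr)=c_j\,\Phi^{(j)}(\delta_j^{-1}r)$ for $r>0$, hence the decomposition
\[
\Psi_{\lambda,\theta}(\xi)=\Psi_0(\xi)+\sum_{j\ge1}c_j\,b(\xi)\,\Phi^{(j)}\big(\delta_j^{-1}(\xi_2-\psi(\xi_1))\big),
\qquad
\Psi_0(\xi)=b(\xi)\,(\varphi_{\lambda,\theta}\chi_0)(\xi_2-\psi(\xi_1)).
\]
Here $\Psi_0\in C_0^\infty(\Bbb R^2)$ vanishes near the origin (because $b$ does), so $g_{\mathscr F^{-1}(\Psi_0)}$ is bounded on $L^4$ by standard Littlewood--Paley theory. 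For $s$ in $\supp(\Phi_0)$ the ratio $\log(2+(\delta_js)^{-1})/\log(2+\delta_j^{-1})\to1$ together with all its $s$-derivatives as $j\to\infty$; since $\Phi_0$ is fixed and $s^{\lambda}$ has bounded derivatives on $[1/4,1]$, there is a constant $K\ge1$, independent of $j$, with $\|(d/dr)^m(K^{-1}\Phi^{(j)})\|_\infty\le1$ for $0\le m\le3$ and all $j\ge1$. Since $\supp(K^{-1}\Phi^{(j)})\subset[-1,1]$ and $\delta_j\le1/2$, each $b(\xi)K^{-1}\Phi^{(j)}(\delta_j^{-1}(\xi_2-\psi(\xi_1)))$ is a multiplier as in \eqref{e1.3}, so Theorem \ref{thm1.2} applies to it with a constant $C$ independent of $j$.

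Next, writing $\eta_j=\mathscr F^{-1}\big(b\,\Phi^{(j)}(\delta_j^{-1}(\xi_2-\psi(\xi_1)))\big)$ and applying Minkowski's inequality in $L^2(dt/t)$ to the identity $\mathscr F^{-1}(\Psi_{\lambda,\theta})=\mathscr F^{-1}(\Psi_0)+\sum_{j\ge1}c_j\eta_j$, I get the pointwise bound
\[
g_{\mathscr F^{-1}(\Psi_{\lambda,\theta})}(f)\le g_{\mathscr F^{-1}(\Psi_0)}(f)+\sum_{j\ge1}c_j\,g_{\eta_j}(f).
\]
Theorem \ref{thm1.2}, applied to $K^{-1}\Phi^{(j)}$ with $K$ absorbed into the constant, yields $\|g_{\eta_j}(f)\|_4\le C\delta_j^{1/2}\big(\log\delta_j^{-1}\big)^\tau\|f\|_4$, so
\[
\|g_{\mathscr F^{-1}(\Psi_{\lambda,\theta})}(f)\|_4\le\Big(C_b+C\sum_{j\ge1}c_j\,\delta_j^{1/2}\big(\log\delta_j^{-1}\big)^\tau\Big)\|f\|_4 .
\]
Because $c_j\,\delta_j^{1/2}\big(\log\delta_j^{-1}\big)^\tau\asymp 2^{-j(\lambda+1/2)}j^{\tau-\theta}$ as $j\to\infty$, the series converges for every $\theta\in\Bbb R$ when $\lambda>-1/2$ (exponential decay beats the polynomial factor), and for $\lambda=-1/2$ it converges exactly when $\tau-\theta<-1$, i.e.\ $\theta>\tau+1$. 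This gives the claimed estimate.

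The only genuinely technical point is the uniform-in-$j$ control of the first three derivatives of the slowly varying logarithmic factor in $\Phi^{(j)}$: this is what guarantees that the constant supplied by Theorem \ref{thm1.2} does not deteriorate with $j$. A direct estimate shows those derivatives are $O\big((\log\delta_j^{-1})^{-1}\big)$ on $\supp(\Phi_0)$, so that in fact $K^{-1}\Phi^{(j)}$ converges in $C^3$ to $K^{-1}s^{\lambda}\Phi_0(s)$; everything else --- the smoothness and vanishing of $\Psi_0$, the Minkowski step, and the summation of the geometric-times-polynomial series --- is routine.
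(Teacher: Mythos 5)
Your proposal is correct and follows essentially the same route as the paper: a dyadic decomposition in $\xi_2-\psi(\xi_1)$ into blocks of the form \eqref{e1.3} with $\delta=2^{-j}$, a smooth remainder handled by standard Littlewood--Paley theory, uniform application of Theorem \ref{thm1.2} to the normalized bumps, and summation of the series $\sum_j 2^{-j(\lambda+1/2)}j^{\tau-\theta}$, which yields exactly the stated conditions on $\lambda$ and $\theta$. The only difference is cosmetic (the paper absorbs the logarithm via the factor $n^\theta(\log(2+2^n/u))^{-\theta}\phi_0(u)$ rather than your ratio of logarithms, and leaves the uniform $C^3$ bounds implicit, which you verify explicitly).
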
 
\begin{theorem} \label{thm1.7+} 
Let $\varphi_{\lambda,\theta}$ be as in Theorem $\ref{thm1.3}$.  
Let $\psi$ be as in Theorem $\ref{thm1.2+}$ and 
let $b$ be as in \eqref{e1.3}. Put  
$\Psi_{\lambda,\theta}(\xi)
=b(\xi)\varphi_{\lambda,\theta}(\xi_2-\psi(\xi_1))$. 
Then  
\begin{equation*} 
\|g_{\mathscr{F}^{-1}(\Psi_{\lambda, \theta})}(f)\|_2 \leq C_{\lambda,\theta}
\|f\|_2    
\end{equation*} 
 if $\lambda=-1/2$ and $\theta> 1;$    
this  also holds true  for every $\theta\in \Bbb R$ when $\lambda>-1/2$. 
\end{theorem}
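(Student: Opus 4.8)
The plan is to derive Theorem \ref{thm1.7+} from Theorem \ref{thm1.5+} by decomposing $\varphi_{\lambda,\theta}$ dyadically in the variable $s = \xi_2 - \psi(\xi_1)$ and summing the pieces. First I would fix a smooth function $\Phi_0 \in C_0^\infty(\Bbb R)$ supported in $[1/2, 2]$ with $\sum_{j\in\Bbb Z}\Phi_0(2^j r) = 1$ for $r>0$, and write
\begin{equation*}
\varphi_{\lambda,\theta}(s) = \sum_{j\geq j_0} 2^{-j\lambda}\bigl(\log(2+2^{j}/s)\bigr)^{-\theta}\Phi_0(2^j s) =: \sum_{j\geq j_0} 2^{-j\lambda} c_j(s)\Phi_0(2^j s),
\end{equation*}
at least for $s$ in the bounded range forced by $b$; on that range the truncation to $j \geq j_0$ for some fixed $j_0$ depending only on $\supp(b)$ costs only a smooth bump that contributes a trivially bounded $g$-function piece. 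The point is that $c_j(s)\Phi_0(2^j s)$ is, after rescaling $r = 2^j s$, a function $\Phi_j(r) = (\log(2+2^j/ (2^{-j}r)))^{-\theta}\Phi_0(r)$ — wait, more carefully $c_j(s) = (\log(2 + 2^j/s))^{-\theta}$ with $s = 2^{-j}r \in [1/2,2]\cdot 2^{-j}$, so $2^j/s = 2^{2j}/r$ and $c_j = (\log(2 + 2^{2j}/r))^{-\theta}$, which is $\asymp (2j\log 2)^{-\theta} = C j^{-\theta}$ uniformly on $r\in[1/2,2]$, with all derivatives in $r$ of size $O(j^{-\theta})$ as well. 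Hence, setting $\Phi^{(j)}(r) = c_j(2^{-j}r)\Phi_0(r) / \|\cdot\|$ rescaled so that $\|(d/dr)^m\Phi^{(j)}\|_\infty \lesssim j^{-\theta}$ for $0\le m\le 3$, the multiplier $b(\xi)2^{-j\lambda}c_j(s)\Phi_0(2^j s)$ is exactly of the form treated in Theorem \ref{thm1.5+} with $\delta = 2^{-j}$ and $\Phi = \Phi^{(j)}$ (times the harmless extra factor $b$, which is absorbed since $b\in C_0^\infty$; one checks the proof of Theorem \ref{thm1.5+} allows a fixed smooth cutoff in $\xi$).

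The second step is to pass from a bound on each $g_{\eta_j}$, where $\eta_j = \mathscr F^{-1}$ of the $j$-th piece, to a bound on $g_{\mathscr F^{-1}(\Psi_{\lambda,\theta})}$. For this I would use the triangle inequality in the Hilbert-space-valued norm defining the $g$-function: since $\mathscr F^{-1}(\Psi_{\lambda,\theta}) = \sum_j \eta_j$ (with the constants $2^{-j\lambda}$ and the $j^{-\theta}$ absorbed into $\eta_j$), Minkowski's inequality gives
\begin{equation*}
\|g_{\mathscr F^{-1}(\Psi_{\lambda,\theta})}(f)\|_2 \leq \sum_{j\geq j_0} \|g_{\eta_j}(f)\|_2 \leq C\sum_{j\geq j_0} 2^{-j\lambda}\, j^{-\theta}\, 2^{-j/2}\,\|f\|_2,
\end{equation*}
invoking Theorem \ref{thm1.5+} with $\delta = 2^{-j}$ and $\|\Phi^{(j)}\|_\infty \lesssim j^{-\theta}$ (for $\lambda > -1/2$ one just uses $j^{-\theta} \le C$ when $\theta \ge 0$, and when $\theta < 0$ one still has $j^{-\theta}$ absorbed by the geometric gain — see below). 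The series $\sum_j 2^{-j(\lambda + 1/2)} j^{-\theta}$ converges precisely when $\lambda + 1/2 > 0$, i.e. $\lambda > -1/2$, for any real $\theta$; and when $\lambda = -1/2$ the general term is $C j^{-\theta}$, so the series $\sum_j j^{-\theta}$ converges iff $\theta > 1$. This reproduces exactly the stated hypotheses.

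The main obstacle I anticipate is the bookkeeping around the logarithmic factor and making sure the $j$-dependence of $\Phi^{(j)}$ enters only through $\|\Phi^{(j)}\|_\infty$ and not through its derivatives in a way that would spoil Theorem \ref{thm1.5+}: that theorem is stated with $C$ independent of $\Phi$ but the bound is $C\delta^{1/2}\|\Phi\|_\infty$, so it is genuinely the sup-norm that matters (unlike the $L^4$ version, Theorem \ref{thm1.2}, which also needs control of three derivatives — that is why the $L^4$ statement in Theorem \ref{thm1.3} needs $\theta > \tau + 1$ rather than $\theta > 1$). Concretely I must verify that $c_j(2^{-j}r)\Phi_0(r)$ has $\|\Phi^{(j)}\|_\infty \asymp j^{-\theta}$ — immediate from $c_j \asymp (2j\log 2)^{-\theta}$ on $\supp\Phi_0$ — and that the remaining factor $b(\xi)$ together with the restriction of $s$ to a bounded interval does not interfere, which follows because $b$ is a fixed $C_0^\infty$ cutoff that can be folded into the amplitude in the proof of Theorem \ref{thm1.5+}. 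A minor additional point is the contribution of $s$ outside the dyadic range (i.e. $s\gtrsim 1$ or the non-dyadic tail near $s=0$ cut off by $b$'s support): on $\supp(b)$ the variable $s=\xi_2-\psi(\xi_1)$ ranges over a bounded set, so only finitely many negative $j$ and no truly large-$s$ terms occur, and the finitely many leftover pieces are handled by a single application of Theorem \ref{thm1.5+} (or are smooth multipliers whose $g$-functions are bounded by a standard Littlewood-Paley estimate). Assembling these pieces and summing the geometric/zeta series completes the proof.
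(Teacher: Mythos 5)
Your proposal is correct and takes essentially the same route as the paper: decompose dyadically in $s=\xi_2-\psi(\xi_1)$, apply Theorem \ref{thm1.5+} to each piece with $\delta=2^{-j}$ and $\|\Phi^{(j)}\|_\infty\lesssim j^{-\theta}$ (only the sup norm matters at $L^2$, unlike the $L^4$ case), and sum $\sum_j 2^{-j(\lambda+1/2)}j^{-\theta}$, which converges exactly under the stated hypotheses. The only slip is in your displayed decomposition, which should keep the bounded factor $(2^js)^\lambda$ and the logarithm $\log(2+1/s)$ (that is, $\log(2+2^j/r)$ after the rescaling $r=2^js$, as in the paper's $\varphi_{n,\lambda,\theta}$), but since both are comparable to $1$ and $j^{-\theta}$ respectively on $\supp\Phi_0$, none of your estimates change.
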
 
Theorems \ref{thm1.3} and \ref{thm1.7+} will be shown by applying Theorems 
\ref{thm1.2} and \ref{thm1.5+}, respectively.  
Considering the case when $\lambda>-1/2$ and $\theta=0$ and applying 
interpolation, by Theorems \ref{thm1.3} and \ref{thm1.7+} we 
 have in particular the following. 
\begin{corollary}\label{cor1.8} 
Let $\Psi_{\lambda,\theta}$ be as in Theorem $\ref{thm1.3}$.  Then 
if $\lambda>-1/2$,  we have 
\begin{equation*} 
\|g_{\mathscr{F}^{-1}(\Psi_{\lambda, 0})}(f)\|_p \leq C_{\lambda}
\|f\|_p,   \quad p\in [2, 4].     
\end{equation*} 
\end{corollary}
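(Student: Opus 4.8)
The plan is to deduce Corollary \ref{cor1.8} from Theorems \ref{thm1.3} and \ref{thm1.7+} by interpolation with values in a Hilbert space. Fix $\lambda>-1/2$ and put $\theta=0$, so that $\varphi_{\lambda,0}(r)=r_+^\lambda$ and $\Psi_{\lambda,0}(\xi)=b(\xi)(\xi_2-\psi(\xi_1))_+^\lambda$. First I would observe that, since $b$ is smooth with compact support and vanishes near the origin and since $2\lambda>-1$, the function $\Psi_{\lambda,0}$ lies in $L^2(\Bbb R^2)$; hence $k:=\mathscr F^{-1}(\Psi_{\lambda,0})\in L^2(\Bbb R^2)$ and $g_k(f)$ is well defined for $f\in\mathscr S(\Bbb R^2)$.

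Next I would set up the operator-theoretic framework. Let $\mathcal H=L^2((0,\infty),dt/t)$ and let $T$ be the linear map $Tf=(k_t*f)_{t>0}$. For $f\in\mathscr S(\Bbb R^2)$ the function $(x,t)\mapsto (k_t*f)(x)$ is jointly measurable, so $Tf$ is a well-defined $\mathcal H$-valued measurable function on $\Bbb R^2$, and by construction
\begin{equation*}
\|Tf\|_{L^p(\Bbb R^2;\mathcal H)}=\|g_k(f)\|_p .
\end{equation*}
Then I would quote Theorem \ref{thm1.7+} with $\theta=0$ in the regime $\lambda>-1/2$ to get $\|Tf\|_{L^2(\Bbb R^2;\mathcal H)}\le C_\lambda\|f\|_2$, and Theorem \ref{thm1.3} with $\theta=0$ in the same regime to get $\|Tf\|_{L^4(\Bbb R^2;\mathcal H)}\le C_\lambda\|f\|_4$; thus $T$ is a linear operator bounded from $L^2$ and from $L^4$ into the corresponding $\mathcal H$-valued Lebesgue spaces.

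Finally I would apply the vector-valued Riesz--Thorin interpolation theorem for linear operators into a fixed Banach space. Since the target $\mathcal H$ is a (Hilbert, in particular Banach) space, the complex interpolation identity $[L^2(\Bbb R^2;\mathcal H),L^4(\Bbb R^2;\mathcal H)]_\vartheta=L^p(\Bbb R^2;\mathcal H)$ holds whenever $1/p=(1-\vartheta)/2+\vartheta/4$, $0\le\vartheta\le1$, and this yields
\begin{equation*}
\|g_{\mathscr F^{-1}(\Psi_{\lambda,0})}(f)\|_p=\|Tf\|_{L^p(\Bbb R^2;\mathcal H)}\le C_\lambda\|f\|_p
\end{equation*}
for all $p\in[2,4]$ and $f\in\mathscr S(\Bbb R^2)$; a density argument then extends the estimate to all $f\in L^p(\Bbb R^2)$, which is exactly the assertion of the corollary.

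I do not expect a serious obstacle here: the only things to be careful about are the well-definedness and linearity of $T$ on $\mathscr S(\Bbb R^2)$, the joint measurability that makes the $\mathcal H$-valued object meaningful, and the fact that the Riesz--Thorin argument runs verbatim with a Hilbert-space target — all standard. The one genuine simplification is the choice $\theta=0$, which makes the two input bounds honest strong-type estimates with no logarithmic loss; treating the endpoint $\lambda=-1/2$ would instead require tracking the factor $(\log(1/\delta))^\tau$ coming from Theorem \ref{thm1.2} and the condition $\theta>\tau+1$ in Theorem \ref{thm1.3}, but that is not needed for this corollary.
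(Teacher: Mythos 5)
Your proposal is correct and follows essentially the same route as the paper, which obtains the corollary precisely by interpolating the $L^2$ bound of Theorem \ref{thm1.7+} and the $L^4$ bound of Theorem \ref{thm1.3} in the case $\lambda>-1/2$, $\theta=0$; your only addition is to make the standard vector-valued (Hilbert-space target) interpolation framework explicit.
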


Theorem \ref{thm1.3} and a modification will be applied in proving Theorem 
\ref{thm1.1}.  Similarly, Theorem \ref{thm1.2+} can be shown by 
Theorem \ref{thm1.7+} and a variant. 
 See Stein-Weiss \cite[\S 5, Chap.\,VII]{SW} for the relation 
between  boundedness of  square functions and  boundedness of 
 maximal Bochner-Riesz type operators.  
Theorems \ref{thm1.2} and \ref{thm1.3} generalize results of \cite{Ca}.  
See also \cite{Cl} and \cite{SZ} for related results.   
A result analogous to Theorem \ref{thm1.3} of the case $\lambda>-1/2$ and 
$\theta=0$ is stated in \cite{Cl} by considering square functions defined with 
 homogeneous functions in a general setting.  
\par 
In the proofs of the theorems we shall apply arguments of \cite{Ca}. Also, 
we shall apply a reasoning in \cite{Co2}, which leads to \eqref{e3.5} below.  
See also \cite{ Co, Fe} for background results in the ideas of the proofs. 
One purpose of this note is to provide proofs of 
the theorems with computations being accessible based on the condition 
$(A.2)$ and \cite[Lemma 5.3]{Sa}.       
\par  
By a partition of unity, the  proofs of Theorems \ref{thm1.1}, \ref{thm1.2} and \ref{thm1.3} may be divided into several cases:  
\par 
\begin{enumerate} 
\item[$(B.1)$]   $\Gamma \subset (-b,b)\times (c,d)$, $0<b$, $0<c<d$, 
$I=[A,B]\subset (-b,b)$;
\item[$(B.2)$]  $\Gamma \subset (-b,b)\times (-d, -c)$, $0<b$, $0<c<d$, 
$I=[A,B]\subset (-b,b)$; 
\item[$(B.3)$] $\Gamma \subset (a,b)\times (-d, d)$, $0<a<b$, $0<d$, 
$I=[A,B]\subset (a,b)$;
\item[$(B.4)$] $\Gamma \subset (-b,-a)\times (-d, d)$, $0<a<b$, $0<d$, 
$I=[A,B]\subset (-b,-a)$. 
\end{enumerate}  
In Section \ref{sec2} we shall review some preliminary results for Kakeya 
maximal functions and Littlewood-Paley inequalities.   
In Section \ref{sec3} we shall prove Theorems \ref{thm1.2} and \ref{thm1.3} 
under the conditions of case $(B.1)$ with $\Psi'>0$ on $I$, where 
$\Psi(t)=t/\psi(t)$; we note that the condition $\psi(t)\neq t\psi'(t)$ 
implies that $\Psi'(t)\neq 0$. 
We shall prove a vector valued inequality for a sequence of Fourier multiplier 
operators in Section \ref{sec3} (Proposition \ref{prop3.7}), which will be 
applied to prove Theorem \ref{thm1.2}.  
To prove Theorem \ref{thm1.2} we shall also need some results on geometry of 
support sets of certain Fourier multipliers,
 which will be taken from \cite{Sa}.  
Theorem \ref{thm1.3} follows from Theorem \ref{thm1.2}. 
\par 
Theorem \ref{thm1.1} will be shown in Section \ref{sec4} by applying 
Theorem \ref{thm1.3} under the conditions of the 
case $(B.1)$ with $\Psi'>0$ on $I$. To apply Theorem \ref{thm1.3} we need to 
introduce a homogeneous function of degree one in a cone. The function will be 
constructed by using the condition $(A,2)$. (See Corollaries \ref{cor4.2} and 
\ref{cor4.4}.)  
\par 
 Theorems \ref{thm1.1}, \ref{thm1.2} and \ref{thm1.3} will be shown in 
Section \ref{sec5} in their full generalities claimed. The proof of 
Theorem \ref{thm1.5+} will be given in Section \ref{sec6}. 
The other results on $L^2$ boundedness can be shown in the same way as in 
the case of $L^4$ boundedness by applying the boundedness of $g_\eta$, 
as we can easily see.

\section{Maximal functions and  Littlewood-Paley inequalities}\label{sec2}  
 
Let $N$ be a positive integer and 
let $\Omega_N$ be a set of vectors $v$ in $\Bbb R^2$ with $|v|=1$ 
such that $\card \Omega_N\leq N$.  
Let $\mathscr B_N$ be the set of all rectangles in $\Bbb R^2$ 
 one of whose sides is parallel to a vector in $\Omega_N$. 
Let $M_{\Omega_N}$ be the maximal operator defined by  
\begin{equation*} 
M_{\Omega_N}f(x)
=\sup_{x\in R\in \mathscr B_N} \frac{1}{|R|}\int_R|f(y)|\, dy,  
\end{equation*} 
where $|R|$ denotes the Lebesgue measure of $R$. 
Then we have the following (see \cite{Stro, Ka}).
\begin{lemma}\label{lem2.1}  
There exists a positive constant $\alpha$ independent of $N$ such that 
\begin{equation*} 
\|M_{\Omega_N}f\|_2 \leq C(\log(2N))^\alpha\|f\|_2  
\end{equation*} 
for some $C>0$.  
\end{lemma}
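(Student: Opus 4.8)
The plan is to recover this as the classical planar Kakeya maximal estimate, by the linearization‑plus‑geometry method of C\'ordoba, in the form valid for an \emph{arbitrary} $N$‑element set of directions (Str\"omberg; the sharp dependence on $N$ being due to Katz). First I would linearize: fix $f\ge 0$ and a measurable assignment $x\mapsto R(x)\in\mathscr B_N$ with $x\in R(x)$ and $|R(x)|^{-1}\int_{R(x)}f>\tfrac12 M_{\Omega_N}f(x)$; it suffices to bound the linear operator $Tf(x)=|R(x)|^{-1}\int_{R(x)}f$ with a constant independent of the assignment. By approximating the assignment by one taking finitely many (hence distinct) values $R_1,\dots,R_M$ one gets pairwise disjoint sets $\Omega_j=\{x:R(x)=R_j\}\subset R_j$.

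Next I would reduce the $L^2$ bound to a geometric overlap estimate for the $R_j$. Writing $a_j=|R_j|^{-1}\int_{R_j}f$, duality gives $\|Tf\|_2^2=\sum_j|\Omega_j|a_j^2=\int f\,\big(\sum_j a_j|\Omega_j|\,|R_j|^{-1}\mathbf 1_{R_j}\big)\le\|f\|_2\,\big\|\sum_j a_j|\Omega_j|\,|R_j|^{-1}\mathbf 1_{R_j}\big\|_2$, and expanding the last $L^2$ norm produces the bilinear form $\sum_{j,k}a_ja_k|\Omega_j||\Omega_k|\,|R_j\cap R_k|/(|R_j||R_k|)$. The task is then to show this is $\lesssim(\log N)^{2\alpha}\sum_j|\Omega_j|a_j^2$. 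I would split the pairs $(j,k)$ according to (i) whether $R_j$ and $R_k$ point in the same direction of $\Omega_N$ — where the classical strong maximal theorem (Hardy--Littlewood in each coordinate) handles the sum with a bounded constant — and (ii) pairs with distinct directions, which I would organize by the dyadic size of the angle between the long sides.

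The heart of the matter, and the step I expect to be the main obstacle, is C\'ordoba's geometric lemma: for two rectangles whose long sides make an angle $\theta$ and whose eccentricities are at most $N$, one has $|R_j\cap R_k|\lesssim |R_k|\cdot\min\!\big(1,(N\theta)^{-1}\big)$, up to the usual correction when the two rectangles differ greatly in shape. Feeding this into the bilinear form and summing over the $O(\log N)$ dyadic angular annuli, the contribution of each annulus is controlled by $\sum_j|R_j|$ (using the disjointness of the $\Omega_j$, which is what prevents a ``bush'' of thin tubes from producing more overlap than the Kakeya‑compressed amount), and the resulting $\log N$ factors accumulate to the stated bound with some fixed $\alpha$; no attempt at the optimal exponent is needed here. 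Since the estimate is exactly the content of the cited references, one may alternatively simply invoke \cite{Stro, Ka}.
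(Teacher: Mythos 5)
The paper itself gives no argument for this lemma: it is stated with the pointer ``(see \cite{Stro, Ka})'', so your closing remark --- that one may simply invoke Str\"omberg and Katz --- is exactly what the paper does, and as a citation the lemma is fine. Note also that in the application (Proposition \ref{prop3.7}) the direction set $\Omega_{N_*}$ consists of tangent/position directions of a curve with $\psi''\neq 0$ sampled at equally spaced points, so it is essentially uniformly distributed, and for such sets the C\'ordoba--Str\"omberg circle of ideas does apply.

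However, the argument you actually sketch has a genuine gap if it is meant to prove the lemma as stated, i.e.\ for an \emph{arbitrary} set of at most $N$ directions. The linearization and the duality step are fine, but the key summation --- ``organize the pairs by the dyadic size of the angle, use $|R_j\cap R_k|\lesssim|R_k|\min(1,(N\theta)^{-1})$, and let the $O(\log N)$ angular annuli each contribute $\sum_j|R_j|$'' --- relies implicitly on the directions being uniformly distributed (or lacunary): only then does one control how many of the chosen rectangles can fall into a single dyadic angular annulus relative to a given $R_k$. For an arbitrary $N$-point set the directions may cluster badly, the disjointness of the sets $\Omega_j$ alone does not cap the within-annulus bilinear sum, and no straightforward accumulation of $\log N$ factors results; obtaining \emph{any} power of $\log N$ in this generality is precisely the content of Katz's theorem, whose proof is a genuinely different combinatorial/induction argument, not the overlap computation above. (A smaller point: rectangles in $\mathscr B_N$ have arbitrary eccentricity, so an overlap bound normalized by eccentricity $N$, as in your $\min(1,(N\theta)^{-1})$, is not the right form; the usual estimate is in terms of the two widths and the angle.) So either restrict the sketch to the uniformly distributed case actually needed later, or rely on the citations, as the paper does.
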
 
\par 
Let $P_n=I_n\times \Bbb R$, $n \in \Bbb Z$ (the set of integers), where 
$\{I_n\}$ is a sequence of non-overlapping intervals in $\Bbb R$   
such that $|I_n|=\tau$, $\forall n$.     
Let $T_mf=\mathscr F^{-1}(m\hat{f})$ for a bounded function $m$. 
If $m=\chi_E$  for a measurable set $E$, we also write $T_{E}$ for 
$T_{\chi_E}$. Then we have the following result 
(see \cite[Lemma $2$]{Ca}). 

\begin{lemma}\label{lem2.2}  
Let $w$ be a non-negative function on $\Bbb R^2$.   Let $s>1$. Then there exists $\Theta>0$ such that 
\begin{equation*} 
\int_{\Bbb R^2} \sum_{n\in \Bbb Z} |T_{P_n}f|^2 w\, dx\leq 
C\left(\frac{1}{s-1}\right)^{\Theta} \int_{\Bbb R^2} 
|f|^2(M_1(w^s))^{1/s}\, dx, 
\end{equation*} 
where $M_1$ denotes the strong maximal operator and $C$ is a constant 
independent of $\tau$.  This is also true if $P_n$ is replaced by 
$Q_n=\Bbb R\times I_n$. 
\end{lemma}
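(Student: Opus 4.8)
The plan is to reduce the weighted square‑function estimate for the Fourier restrictions $T_{P_n}$ to a one‑dimensional Littlewood–Paley inequality with $A_p$‑type weights, combined with the tensor structure of the strips $P_n = I_n\times\Bbb R$. First I would exploit the product form: since the multiplier $\chi_{P_n}$ depends only on $\xi_1$, the operator $T_{P_n}$ acts only in the first variable, i.e.\ $T_{P_n} = T^{(1)}_{I_n}\otimes \mathrm{Id}$, where $T^{(1)}_{I_n}$ is the Fourier projection onto $I_n$ on $\Bbb R$. So by Fubini it suffices to prove, for a.e.\ fixed $x_2$, the one‑variable estimate
\begin{equation*}
\int_{\Bbb R}\sum_{n}|T^{(1)}_{I_n}g(x_1)|^2 w(x_1,x_2)\,dx_1
\le C\Big(\tfrac{1}{s-1}\Big)^{\Theta}\int_{\Bbb R}|g(x_1)|^2 \big(M_{1}(w^{s})(\,\cdot\,,x_2)\big)^{1/s}(x_1)\,dx_1,
\end{equation*}
and then integrate in $x_2$; care is needed because the strong maximal function $M_1$ does not factor, but one has the pointwise bound $M_1(w^s)(x_1,x_2)\ge M^{(1)}_{x_1}\big(M^{(2)}_{x_2}(w^s)\big)$ in either order, so it is enough to put $M^{(1)}$ on the inside after first majorising $w$ by $M^{(2)}_{x_2}(w^s)^{1/s}$ in the $x_2$ direction, absorbing the latter via the one‑dimensional weighted inequality in $x_2$ for the trivial (identity) operator. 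I would organise this as: (i) freeze $x_2$ and handle the $x_1$‑projections onto equal‑length intervals; (ii) transfer back.

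For step (i), the key classical input is the weighted inequality for the lacunary‑type / equal‑length interval square function on $\Bbb R$. For a partition of $\Bbb R$ into consecutive intervals of equal length $\tau$, the associated square function $g\mapsto(\sum_n|T^{(1)}_{I_n}g|^2)^{1/2}$ is essentially a (modulated, rescaled) smooth Littlewood–Paley square function at scale $\tau$; by Córdoba–Rubio de Francia type theory it satisfies
\begin{equation*}
\int_{\Bbb R}\sum_n|T^{(1)}_{I_n}g|^2\, v\,dx_1\le C\int_{\Bbb R}|g|^2\, M^{(1)}_r(v)\,dx_1
\end{equation*}
for any $r>1$, with $C=C(r)$ blowing up polynomially as $r\downarrow1$; the exponent $\Theta$ in the statement records this blow‑up. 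Rescaling by $\tau$ shows the constant is independent of $\tau$, which is exactly the uniformity the lemma demands. I would quote this (it is standard, and is the $\tau$‑independent form one gets by dilation invariance of the smooth Littlewood–Paley projections) rather than reprove it; the sharp‑maximal‑function / $A_1$‑extrapolation proof of the $(\tfrac{1}{r-1})^\Theta$ dependence is well known.

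Combining: apply the one‑dimensional estimate in $x_1$ with weight $v(x_1)=M^{(2)}_{x_2}(w^s)(x_1,x_2)^{1/s}$ and exponent $r=s$; this produces $M^{(1)}_s\big(M^{(2)}_{x_2}(w^s)^{1/s}\big)\le \big(M^{(1)}_1(M^{(2)}(w^s))\big)^{1/s}$, and then $M^{(1)}_1 M^{(2)}_1\le M_1$ gives the claimed bound $(M_1(w^s))^{1/s}$ after integrating in $x_2$, the $x_2$‑integration being handled identically because the operator there is the identity and the inequality $\int |g|^2 M^{(2)}(w^s)^{1/s}\,dx_2$ trivially dominates $\int|g|^2 w\,dx_2$ is in the wrong direction — so in fact one must be a little more careful and instead run the argument so that $M_1$ appears only once, by noting $w\le (M_1(w^s))^{1/s}$ pointwise and that it suffices to majorise $v$ by $(M_1(w^s))^{1/s}$, which is $\ge M^{(1)}_s(\cdot)$ applied to anything $\le (M_1(w^s))^{1/s}$ already. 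The case $Q_n=\Bbb R\times I_n$ is identical with the roles of $x_1,x_2$ interchanged. The main obstacle, and the only genuinely delicate point, is keeping the constant independent of $\tau$ and tracking the $(s-1)^{-\Theta}$ dependence simultaneously — both are handled by the dilation‑invariant smooth square function at scale $\tau$ together with the known polynomial‑in‑$(r-1)^{-1}$ bound for the weighted square function inequality; everything else is bookkeeping with products of one‑dimensional maximal operators.
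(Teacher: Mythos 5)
Your overall strategy is the same as the paper's: exploit the tensor structure $T_{P_n}=T^{(1)}_{I_n}\otimes \mathrm{Id}$ to reduce to a one--dimensional weighted inequality for the square function associated with equal--length intervals, then pass from the one--dimensional maximal operator to the strong maximal operator $M_1$ by freezing $x_2$ (using $w\leq (M_1(w^s))^{1/s}$ and the a.e.\ domination of the one--variable maximal function of $w(\cdot,x_2)$ by $M_1w$). That reduction is fine, and your self-correction at the end lands on the right fix, even if the route to it is tangled.

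The problem is that the one--dimensional inequality you ``quote rather than reprove'' \emph{is} the content of the lemma, and the justification you offer for it is not correct: the square function $\bigl(\sum_n|T^{(1)}_{I_n}g|^2\bigr)^{1/2}$ built from \emph{sharp} cutoffs onto equal intervals is not ``essentially a smooth Littlewood--Paley square function at scale $\tau$''; the whole difficulty (this is Carleson's and C\'ordoba's equal-interval theorem) lies in the sharpness of the cutoffs, and no pointwise or trivial comparison with a smooth square function exists. What the paper actually does at this point is: (i) prove a weighted bound for a single sharp projection $T_J$ with weight $(M(w^s))^{1/s}$ and constant $(s-1)^{-2\beta_0}$, by writing $T_J$ as a product of modulated Hilbert transforms and invoking the weighted Hilbert-transform inequality; (ii) introduce a translated grid $J_m=[(j-1/2)\tau,(j+1/2)\tau]$ and smooth frequency cutoffs $f_m$ with $T_{J_m}f=T_{J_m}f_m$, split each $T_{I_j}f$ into at most two pieces $T_{J_m\cap I_j}f_m$, apply (i) to each piece, and then control $\sum_m\int|f_m|^2(M(w^s))^{1/s}$ by $\int|f|^2\,M\bigl((M(w^s))^{1/s}\bigr)$ via C\'ordoba's almost-orthogonality argument, absorbing the extra $M$ by the $A_1$ property of $(M(w^s))^{1/s}$ at the cost of one more factor $(s-1)^{-1}$; this yields $\Theta=2\beta_0+1$. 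If you are permitted simply to cite C\'ordoba's weighted equal-interval inequality with its $(r-1)^{-\Theta}$ blow-up, your argument closes; but as a proof it defers exactly the step that needs proving, and the one sentence you give in its place would not survive scrutiny.
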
 
\begin{proof} 
Let $H$ be the Hilbert transform on $\Bbb R$. Put 
$M^{(s)}(w)= (M(w^s))^{1/s}$, $s>1$,  where $M$ 
denotes the Hardy-Littlewood maximal operator on $\Bbb R$ and 
 $w$ a non-negative function on $\Bbb R$.
Then it is known that 
\begin{equation} \label{e2.1}
\int |Hf|^2M^{(s)}(w)\, dx \leq C\left(\frac{1}{s-1}\right)^{\beta_0}\int |f|^2
M^{(s)}(w)\, dx 
\end{equation} 
for some $\beta_0>0$. It follows that if $J=[a,b]$ is an interval 
\begin{equation} \label{e2.2} 
\int |T_J f|^2M^{(s)}(w)\, dx \leq C\left(\frac{1}{s-1}\right)^{2\beta_0}
\int |f|^2 M^{(s)}(w)\, dx .  
\end{equation} 
This can be seen from \eqref{e2.1} by the equation 
\begin{equation*} 
T_J=(1/2)\left(I+iN_aHN_{-a}\right)
(1/2)\left(I-iN_bHN_{-b}\right), 
\end{equation*} 
where $I$ denotes the identity operator and $N_af(x)=e^{2\pi ixa}f(x)$.   
\par 
We show the lemma with $\Theta=2\beta_0 +1$. 
Let $J_j=[(j-1/2)\tau, (j+1/2)\tau]$, $j\in \Bbb Z$. 
If $\varphi\in C^\infty(\Bbb R)$, $\supp(\varphi) \subset [-2, 2]$, 
$\varphi=1$ on $[-1,1]$ and  
$\widehat{f_j}(\xi)=\varphi(2j-2\tau^{-1}\xi)\hat{f}(\xi)$, then 
$T_{J_m}f=T_{J_m}f_m$ and 
\begin{align*}  
|T_{I_j}f|^2&= \left|\sum_{m: |J_m\cap I_j|\neq 0}T_{J_{m}\cap I_j}f\right|^2 
\leq 2\sum_{m: |J_m\cap I_j|\neq 0}\left|T_{J_{m}\cap I_j}f\right|^2 
\\ 
&=2\sum_{m: |J_m\cap I_j|\neq 0}\left|T_{J_{m}\cap I_j}f_m\right|^2. 
\end{align*} 
Using this and \eqref{e2.2} with $J_m\cap I_j$ in place of $J$, we have ,    
\begin{align*} 
\sum_j\int |T_{I_j}f|^2 M^{(s)}(w)\, dx &\leq 
C\sum_j\sum_{m: |J_m\cap I_j|\neq 0}
\left(\frac{1}{s-1}\right)^{2\beta_0}\int \left|f_m\right|^2 M^{(s)}(w)\, dx 
\\ 
&=C\sum_m\sum_{j: |J_m\cap I_j|\neq 0}
\left(\frac{1}{s-1}\right)^{2\beta_0}\int \left|f_m\right|^2 M^{(s)}(w)\, dx 
\\ 
&\leq C\left(\frac{1}{s-1}\right)^{2\beta_0}\sum_m  
\int |f_{m}|^2 M^{(s)}(w)\, dx 
\\ 
&\leq C\left(\frac{1}{s-1}\right)^{2\beta_0}
\int |f|^2 MM^{(s)}(w)\, dx   
\\ 
&\leq C\left(\frac{1}{s-1}\right)^{2\beta_0+1} 
\int |f|^2 M^{(s)}(w)\, dx,    
\end{align*} 
where the penultimate inequality follows from the arguments in 
\cite[pp. 297-298]{Co4} and the last inequality is implied by 
the well-known estimate for the $A_1$ constant of $M^{(s)}(w)$. 
 From this the conclusion of the Lemma \ref{lem2.2} can be derived. 
\end{proof}

\section{Proofs of Theorems \ref{thm1.2} and \ref{thm1.3} 
under $(B.1)$ with $\Psi'>0$} 
\label{sec3} 
In this section we prove Theorems \ref{thm1.2} and \ref{thm1.3}
 under the conditions of $(B.1)$ stated in Section \ref{sec1} 
with $\Psi'>0$.   
We may assume that $\delta=2^{-L}$ for some positive integer $L$.  
 Let 
\begin{equation*}
W_k=\cup \{(2^n,2^{n+1}]: n\equiv k \mod L\}, \quad k= 0, 1, \dots, L-1. 
\end{equation*}
We have 
\begin{equation} \label{e3.1}
g_h(f)\leq \sum_{k=0}^{L-1} g_h^{(k)}(f), 
\end{equation} 
where 
\begin{equation*} 
g_h^{(k)}(f)=\left(\int_{W_k}
|h_t*f(x)|^2\, \frac{dt}{t}\right)^{1/2}. 
\end{equation*} 
We write $W$ for $W_0$ and estimate $g_\eta^{(0)}(f)$.   
The other functions $g_\eta^{(k)}(f)$ will be estimated similarly. 
\par 
 We assume that $\delta$ is a small positive number and 
that $\delta^{-1/2}$ is an integer. Put $N=c_0\delta^{-1/2}$, where 
$c_0\delta^{-1/2}$ is a positive integer.  
We assume that 
$\omega_k=[a_{k-1}, a_k]$, $-b=a_0<a_1<\dots <a_{N}=b$, 
$a_k-a_{k-1}=\delta^{1/2}$, $1\leq k\leq N$, 
 $\cup_{k=1}^N \omega_k=[-b,b]$ and that     
 $H_j=[b_{j-1},b_j]$, $b_j-b_{j-1}=\delta^{1/2}$, $1\leq j\leq N$, 
$\cup_{j=1}^NH_j=[c,d]$. 
\par 
Recall that  
\begin{equation*} 
\phi^{(\delta)}(\xi)=\phi(\xi)= b(\xi)\Phi(\delta^{-1}(\xi_2-\psi(\xi_1))).  
\end{equation*} 
If $\delta$ is small enough, we have 
\begin{equation*} 
\supp(\phi)\subset \left(\bigcup_{\omega_\ell\subset I} \omega_\ell\right)
\times \Bbb R. 
\end{equation*} 
Decompose $\phi^{(\delta)}$ as 
\begin{equation} \label{e3.2} 
\phi^{(\delta)}=\sum_\ell \zeta_\ell\phi^{(\delta)} +
\sum_\ell \widetilde{\zeta}_\ell\phi^{(\delta)}
=: \phi^{(\delta)}_{(1)} +\phi^{(\delta)}_{(2)}, 
\end{equation} 
where $\zeta_\ell, \widetilde{\zeta}_\ell\in C_0^\infty(\Bbb R)$ such that 
$\supp(\zeta_\ell)\subset \omega_\ell$, $\supp(\widetilde{\zeta_\ell})\subset 
\omega_\ell+(1/2)\delta^{1/2}$ and  
\begin{equation*} 
|(d/ds)^\gamma \zeta_\ell|\leq 
C_\gamma \delta^{-\gamma/2}, \quad |(d/ds)^\gamma \widetilde{\zeta}_\ell|\leq 
C_\gamma \delta^{-\gamma/2}, \quad \forall \gamma \in \Bbb Z\cap [0,\infty).  
\end{equation*} 
We estimate $g_{(i)}(f)$ for $i=1$, where 
$g_{(i)}(f)=g_{h_{(i)}}(f)$ with 
$h_{(i)}=\mathscr{F}^{-1}(\phi^{(\delta)}_{(i)})$, $i=1, 2$. 
 The function $g_{(2)}(f)$ can be estimated similarly. 
Also, let $g_{(i)}^{(k)}(f)=g_{h_{(i)}}^{(k)}(f)$, $0\leq k\leq L-1$, 
$i=1, 2$. 
\par 
To prove Theorem \ref{thm1.2}, we apply Lemma 5.3 of \cite{Sa} (see \eqref{e3.5} below). For this 
reason it is convenient to assume that $\supp(\Phi)\subset [0, c_*]$, where 
$c_*$ is a sufficiently small positive number depending on $\psi$.  The 
result for $\Phi$ with support in $[-1, 1]$ easily follows by change of 
variables.  
We divide the intervals $\{\omega_\ell\}$ into $4$ families as in \cite{Sa}: 
\begin{equation*}   
\begin{split} 
\mathscr F_1&= \{\omega_1, \omega_5, \omega_9, \dots \}, \quad  
\mathscr F_2= \{\omega_2, \omega_6, \omega_{10}, \dots \}, 
\\ 
\mathscr F_3&= \{\omega_3, \omega_7, \omega_{11}, \dots \}, \quad  
\mathscr F_4= \{\omega_4, \omega_8, \omega_{12}, \dots \}.   
\end{split} 
\end{equation*}
We focus on the family $\mathscr F_1$; the other families can be treated 
similarly.    
Let $\zeta_\ell$, $\omega_\ell \subset I$, be as above. 
Let $\nu(\ell)=4(\ell-1)+1$. 
Put 
\begin{gather*} \label{e3.2+}
s^{(\delta)}(\xi)=\sum_{\ell: \omega_{\nu(\ell)} \subset I} 
s^{(\delta)}_{\nu(\ell)}(\xi), 
\quad        
s^{(\delta)}_\ell(\xi)= \phi(\xi)\zeta_\ell(\xi_1), \quad 
s^{(\delta,t)}_\ell(\xi)=s^{(\delta)}_\ell(t\xi), 
\\ \label{e3.3+}
s^{(\delta,t)}(\xi)=s^{(\delta)}(t\xi). 
\end{gather*} 
\par 
We consider $g_{\mathscr F_1}^{(k)}(f):=
g_{\mathscr F^{-1}(s^{(\delta)})}^{(k)}(f)$, $0\leq k\leq L-1$ and  
we define $g_{\mathscr F_i}^{(k)}(f)$, $2\leq i\leq4$, similarly to
 $g_{\mathscr F_1}^{(k)}(f)$ by using $\mathscr F_i$. 
We prove 
\begin{equation} \label{e3.3} 
\|g_{\mathscr F_i}^{(k)}(f)\|_4 \leq 
C\delta^{1/2}\left(\log\frac{1}{\delta}\right)^{(\beta/2)+2\alpha}\|f\|_4, 
\quad  \beta=3\Theta+2\beta_0,     
\end{equation} 
for $1\leq i\leq 4$, $0\leq k\leq L-1$, 
where $\Theta$ and $\beta_0$ are as in Lemma $\ref{lem2.2}$ and \eqref{e2.1}, 
respectively, and $\alpha$ is as in Lemma $\ref{lem2.1}$.     
It follows that 
\begin{equation*}
\|g_{(i)}^{(k)}(f)\|_4 \leq 
C\delta^{1/2}\left(\log\frac{1}{\delta}\right)^{(\beta/2)+2\alpha}\|f\|_4    
\end{equation*} 
for $i=1$.  
By this and \eqref{e3.1} we see that 
\begin{equation*}
\|g_{(i)}(f)\|_4 \leq 
C\delta^{1/2}\left(\log\frac{1}{\delta}\right)^{(\beta/2)+2\alpha+1}\|f\|_4   
\end{equation*} 
for $i=1$.  This estimate for $i=2$ can be shown similarly. Thus we have 
\begin{equation} \label{e3.4} 
\|g_\eta(f)\|_4 \leq C\delta^{1/2}
\left(\log\frac{1}{\delta}\right)^{(\beta/2)+2\alpha+1}\|f\|_4.  
\end{equation}  
This will complete the proof of Theorem \ref{thm1.2} with 
$\tau=(\beta/2)+2\alpha+1$ 
under the conditions of the case $(B.1)$ with $\Psi'>0$. 
\par 
Now we prove \eqref{e3.3} for $k=0$ and $i=1$; the other estimates can be 
shown similarly. 
We observe that if $t, u\in W$ and $t>u$,  then $t<2u$ or $u<2\delta t$.   
Put 
\begin{equation*} 
\Xi=\{(u,t)\in W\times W: u<t<2u\}, \quad 
\Lambda=\{(u,t)\in W\times W: u<2\delta t\}.  
\end{equation*} 
 By applying the Plancherel theorem we see that 
\begin{align*} 
\|g_{\mathscr F_1}^{(0)}(f)\|_4^4
&\leq 2\iint_{W\times W, t>u} 
\int_{\Bbb R^2}\left|s^{(\delta,t)}\hat{f}*s^{(\delta,u)}\hat{f}\right|^2
\, d\xi \, \frac{dt}{t}\, \frac{du}{u} 
\\ 
&= 2\iint_{\Xi} 
\int_{\Bbb R^2}\left|s^{(\delta,t)}\hat{f}*s^{(\delta,u)}\hat{f}\right|^2\, d\xi \, \frac{dt}{t}\, \frac{du}{u} 
\\ 
&\quad + 2\iint_{\Lambda} 
\int_{\Bbb R^2}\left|s^{(\delta,t)}\hat{f}*s^{(\delta,u)}\hat{f}\right|^2
\, d\xi \, \frac{dt}{t}\, \frac{du}{u} 
\\ 
&= 2\iint_{\Xi} 
\int_{\Bbb R^2}\left|\sum_{\ell}\sum_{m} s^{(\delta,t)}_{\nu(\ell)}  \hat{f}*
s^{(\delta,u)}_{\nu(m)} \hat{f}\right|^2\, d\xi \, \frac{dt}{t}\, \frac{du}{u} 
\\ 
&\quad + 2\iint_{\Lambda} 
\int_{\Bbb R^2}\left|
\sum_{\ell} s^{(\delta,u)}_{\nu(\ell)} 
\hat{f}*s^{(\delta,t)}\hat{f}\right|^2\, d\xi \, \frac{dt}{t}\, \frac{du}{u} 
\\ 
&=: Q_1+Q_2. 
\end{align*} 
Arguing similarly to \cite[pp.\,319-320]{Sa} with application of 
\cite[Lemma 5.3]{Sa} for $Q_1$, where we need the condition that 
$\psi''\neq 0$, we see that 
\begin{align} \label{e3.5} 
Q_1+Q_2 &\leq C\iint_{\Xi} 
\int_{\Bbb R^2} \sum_{\ell}\sum_{m}\left| s^{(\delta,t)}_{\nu(\ell)} 
\hat{f}*s^{(\delta,u)}_{\nu(m)}  \hat{f}\right|^2\, d\xi 
\, \frac{dt}{t}\, \frac{du}{u} 
\\ 
&\quad + C\iint_{\Lambda} 
\int_{\Bbb R^2}\sum_{\ell}\left| s^{(\delta,u)}_{\nu(\ell)} 
\hat{f}*s^{(\delta,t)}\hat{f}\right|^2\, d\xi \, \frac{dt}{t}
\, \frac{du}{u}           \notag
\end{align} 
(see also \cite{Co2} for these arguments). 
Define 
\begin{equation*} 
\mathscr F(S^{(\delta,t)}_\ell f)(\xi)= s^{(\delta,t)}_\ell(\xi) \hat{f}(\xi), 
\quad 
V(f)(x)
=\left(\int_0^\infty\sum_{\ell} |S^{(\delta,t)}_\ell f|^2\, 
\frac{dt}{t}\right)^{1/2}.  
\end{equation*}
Applying the Plancherel theorem on the right hand side of \eqref{e3.5}, we 
see that 
\begin{equation*}
\|g_{\mathscr F_1}^{(0)}(f)\|_4^4  
\leq  C\int V(f)(x)^4 \, dx + C\int \left(g_{\mathscr F_1}^{(0)}(f)(x)V(f)(x)\right)^2\, dx,
\end{equation*} 
which implies that 
\begin{equation}\label{e3.6}
\|g_{\mathscr F_1}^{(0)}(f)\|_4\leq  C\|V(f)\|_4. 
\end{equation} 
\par 
Let $[a_{\ell-2},a_{\ell+1}]\subset I$.   Define 
\begin{gather*}   
\Delta_\ell=\{(\xi_1,\xi_2): \Psi(a_{\ell-1})\xi_2\leq \xi_1 
\leq \Psi(a_{\ell})\xi_2, \, \xi_2>0\}
=\{r(t, \psi(t)): t\in \omega_\ell, r>0\},  
\\ 
S^\ell_h=\{(\xi_1,\xi_2): \psi'(a_{\ell-1})\xi_1 +h\delta \leq \xi_2 
\leq \psi'(a_{\ell-1})\xi_1 +(h+1)\delta \}, \quad h\in \Bbb Z, 
\\ 
P^{\ell}_h=S^\ell_h\cap \widetilde{\Delta}_\ell, 
\quad 
P^{\ell,k,j}_h=S^\ell_h\cap (\omega_k\times H_j)\cap \widetilde{\Delta}_\ell 
\cap (\Bbb R\times [c,d]), \quad 
\widetilde{\Delta}_\ell=\cup_{|\ell'-\ell|\leq 1} \Delta_{\ell'}. 
\end{gather*}  
To prove \eqref{e3.4}, we may assume that $\supp(\hat{f}) 
\subset \cup_{1\leq \ell\leq N}\Delta_\ell$. 
To estimate $V(f)$ we need Lemmas \ref{lem3.2} and \ref{lem3.3} below. 
We first observe the following. 
\begin{lemma} \label{lem3.1} 
It holds that $\supp(s^{(\delta)}_\ell)\subset \widetilde{\Delta}_\ell$.  
\end{lemma}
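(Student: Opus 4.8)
The plan is to unwind the support condition on $s^{(\delta)}_\ell=\phi\,\zeta_\ell(\xi_1)$, reformulate membership in $\widetilde{\Delta}_\ell$ as a single inequality for the slope $\xi_1/\xi_2$, and then observe that the vertical $\delta$-thickening built into $\phi^{(\delta)}$ changes that slope by only $O(\delta)$, which is negligible against the $\delta^{1/2}$ angular width of the cones $\Delta_k$.

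First I would record the elementary description of the cones available in the present case $(B.1)$ with $\Psi'>0$: here $\psi>0$ and $\psi$ is bounded on $I$, $\Psi(t)=t/\psi(t)$ is $C^\infty$ on $I$, and $m_\Psi:=\min_{t\in I}\Psi'(t)>0$, the minimum existing since $\Psi'$ is continuous on the compact interval $I$. Because $\Psi$ maps $\omega_k$ strictly increasingly onto $[\Psi(a_{k-1}),\Psi(a_k)]$, the ray description of $\Delta_k$ agrees with its inequality description, so
\[
\Delta_k=\{(\xi_1,\xi_2):\ \Psi(a_{k-1})\le \xi_1/\xi_2\le \Psi(a_k),\ \xi_2>0\};
\]
since $[a_{\ell-2},a_{\ell+1}]\subset I$ and $\Psi$ is increasing, this gives
\[
\widetilde{\Delta}_\ell=\{(\xi_1,\xi_2):\ \Psi(a_{\ell-2})\le \xi_1/\xi_2\le \Psi(a_{\ell+1}),\ \xi_2>0\}.
\]

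Next I would take any $\xi$ with $s^{(\delta)}_\ell(\xi)\neq 0$. From $\supp(\zeta_\ell)\subset\omega_\ell$ I get $\xi_1\in[a_{\ell-1},a_\ell]$, and from $\supp(\Phi)\subset[0,c_*]$ I get $0\le \varepsilon\le c_*\delta$, where $\varepsilon:=\xi_2-\psi(\xi_1)$; in particular $\xi_2\ge\psi(\xi_1)>0$. A one-line computation gives $\xi_1/\xi_2-\Psi(\xi_1)=-\xi_1\varepsilon/(\psi(\xi_1)\xi_2)$, hence $|\xi_1/\xi_2-\Psi(\xi_1)|\le C_0c_*\delta$, where $C_0$ depends only on $\sup_I|t|$ and $\inf_I\psi$. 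On the other hand the mean value theorem gives $\Psi(a_{k+1})-\Psi(a_k)\ge m_\Psi\delta^{1/2}$ whenever $a_k,a_{k+1}\in I$. Choosing $c_*$ small enough (depending on $\psi$ only) that $C_0c_*\le m_\Psi$, and using $\delta\le\delta^{1/2}$, I obtain $|\xi_1/\xi_2-\Psi(\xi_1)|\le m_\Psi\delta^{1/2}$; since $\Psi(\xi_1)\in[\Psi(a_{\ell-1}),\Psi(a_\ell)]$ this forces $\Psi(a_{\ell-2})\le\xi_1/\xi_2\le\Psi(a_{\ell+1})$, i.e.\ $\xi\in\widetilde{\Delta}_\ell$. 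Taking closures (the set of such $\xi$ lies in $\{\xi_2\ge\inf_I\psi>0\}$, on which $\widetilde{\Delta}_\ell$ is relatively closed) completes the proof.

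I expect the only delicate point to be the calibration in the last step: one must ensure that the vertical thickening by $c_*\delta$ produces an angular change small compared with $m_\Psi\delta^{1/2}$, which is exactly why $c_*$ was chosen small depending on $\psi$ (through the lower bound for $\psi$ and through $\min_I\Psi'$). This freedom is available precisely because we are in case $(B.1)$, where $\psi$ is bounded away from zero; the other cases $(B.2)$--$(B.4)$ are reduced to this one by the changes of variables used throughout the section.
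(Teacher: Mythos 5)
Your proposal is correct and follows essentially the same route as the paper: both arguments observe that the vertical $O(\delta)$ thickening of $\Gamma$ perturbs the slope $\xi_1/\xi_2$ by only $O(\delta)$, while $\Psi'\neq 0$ forces adjacent cones $\Delta_k$ to have angular width $\gtrsim\delta^{1/2}$, so the support can spill over only into $\Delta_{\ell\pm1}$. Your single two-sided inequality $|\xi_1/\xi_2-\Psi(\xi_1)|\le C_0c_*\delta$ merely streamlines the paper's case split on the sign of $a_{\ell-1}$ and $a_\ell$; otherwise the computation is identical.
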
 
\begin{proof}   
(1) Suppose that $a_{\ell-1}\geq 0$. Recall that $\Phi$ is supported in 
$[0,c_*]$, $c_*<1$.    
 If $\xi \in  
\supp(s^{(\delta)}_\ell)\setminus \Delta_\ell$, then 
$\xi=(s,u+\kappa)$ with $s/u=\Psi(a_{\ell-1})$ for some 
$s\in [a_{\ell-1},a_\ell]$ and $\kappa \in [0,\delta]$. 
\par 
Note that $|s/(u+\kappa)-s/u|\leq C\delta$ and 
that $|\Psi(a_{\ell-1})-\Psi(a_{\ell-2})| \geq c\delta^{1/2}$, 
$s/u=\Psi(a_{\ell-1})$.  Thus 
 we see that 
$s/(u+\kappa)\geq \Psi(a_{\ell-2})$ if $\delta$ is small enough, 
and hence  $\Psi(a_{\ell-2})\leq s/(u+\kappa)\leq \Psi(a_{\ell-1})$, 
which implies that $\xi=(s,u+\kappa)\in \Delta_{\ell-1}$. 
\par 
(2) Suppose that $a_{\ell}\leq 0$.  
If $\xi \in  
\supp(s^{(\delta)}_\ell)\setminus \Delta_\ell$, then $\xi
=(s,u+\kappa)$ with $s/u=\Psi(a_\ell)$ 
for some 
$s\in [a_{\ell-1},a_\ell]$ and $\kappa \in [0,\delta]$. 
Therefore, arguing as in part (1), we see that $\xi\in \Delta_{\ell+1}$ 
 if $\delta$ is small enough.  
\par 
(3) Suppose that $a_{\ell-1}< 0<a_\ell$.  Then we can see that 
$\supp(s^{(\delta)}_\ell)\setminus \Delta_\ell= \emptyset$.  
\par 
Combining results in (1), (2) and (3), we finish the proof of the lemma. 
\end{proof}

\begin{lemma} \label{lem3.2} 
Let $t\in [1,2]$ and  
\begin{equation*} 
\mathscr F_{\ell, t}=\{h\in \Bbb Z: S^\ell_h\cap \supp(s^{(\delta,t)}_\ell)\neq \emptyset\}, 
\end{equation*} 
\begin{equation*} 
\mathscr G_{\ell, t}=\{(k,j): (\omega_k\times H_j)\cap 
\supp(s^{(\delta,t)}_\ell)\neq \emptyset\}. 
\end{equation*} 
Then 
$\card(\mathscr F_{\ell, t})\leq C$, $\card(\mathscr G_{\ell, t})\leq C$
 with a constant $C$ independent of $\ell$, $t$ and 
\begin{equation*} 
s^{(\delta,t)}_\ell=\sum_{(k,j) \in \mathscr G_{\ell, t}}\, 
\, \sum_{h\in \mathscr F_{\ell, t}} \chi_{P^{\ell,k,j}_h} 
s^{(\delta,t)}_\ell.  
\end{equation*} 
\end{lemma}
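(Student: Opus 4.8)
The statement is entirely geometric: it describes the support of $s^{(\delta,t)}_\ell$, where $s^{(\delta,t)}_\ell(\xi)=s^{(\delta)}_\ell(t\xi)$, in relation to the slabs $S^\ell_h$ and the boxes $\omega_k\times H_j$, and the plan is to extract it from the smoothness of $\psi$ on the compact interval $I$ together with Lemma \ref{lem3.1}. First I would describe $\supp(s^{(\delta)}_\ell)$. Since $s^{(\delta)}_\ell(\xi)=b(\xi)\Phi(\delta^{-1}(\xi_2-\psi(\xi_1)))\zeta_\ell(\xi_1)$ with $\supp(\Phi)\subset[0,c_*]$ and $\supp(\zeta_\ell)\subset\omega_\ell=[a_{\ell-1},a_\ell]$, every $\xi$ in the support has $\xi_1\in\omega_\ell$ and $0\le\xi_2-\psi(\xi_1)\le c_*\delta$. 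A first-order Taylor expansion of $\psi$ at $a_{\ell-1}$ on $\omega_\ell$ (an interval of length $\delta^{1/2}$), using $\|\psi''\|_{L^\infty(I)}<\infty$, gives $|\psi(\xi_1)-\psi'(a_{\ell-1})\xi_1-E_\ell|\le\tfrac12\|\psi''\|_{L^\infty(I)}\delta$ for $\xi_1\in\omega_\ell$, with $E_\ell=\psi(a_{\ell-1})-a_{\ell-1}\psi'(a_{\ell-1})$. Hence $\supp(s^{(\delta)}_\ell)$ lies inside the slab $\{|\xi_2-\psi'(a_{\ell-1})\xi_1-E_\ell|\le c_1\delta\}$ of slope $\psi'(a_{\ell-1})$, where $c_1=c_*+\tfrac12\|\psi''\|_{L^\infty(I)}$, and also inside an axis-parallel rectangle $\omega_\ell\times J_\ell$ with $|J_\ell|\le(\|\psi'\|_{L^\infty(I)}+c_*)\delta^{1/2}$; all the constants are uniform in $\ell$.

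Next I would transfer this to $s^{(\delta,t)}_\ell$, whose support is $t^{-1}\supp(s^{(\delta)}_\ell)$. For $t\in[1,2]$ the factor $t^{-1}\in[\tfrac12,1]$ only contracts, so $\supp(s^{(\delta,t)}_\ell)$ still lies in a slab of slope $\psi'(a_{\ell-1})$ and width $\le2c_1\delta$ and in a rectangle with sides $\le\delta^{1/2}$ and $\le(\|\psi'\|_{L^\infty(I)}+c_*)\delta^{1/2}$. Since the family $\{S^\ell_h\}_{h\in\mathbb Z}$ tiles the plane into slabs of slope $\psi'(a_{\ell-1})$ and width $\delta$, and $S^\ell_h\cap\supp(s^{(\delta,t)}_\ell)\neq\emptyset$ forces $S^\ell_h$ to meet the enclosing slab, only boundedly many $h$ occur, i.e.\ $\card(\mathscr F_{\ell,t})\le C$ with $C$ depending only on $c_1$. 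Likewise $\{\omega_k\times H_j\}_{k,j}$ tiles $[-b,b]\times[c,d]$ into $\delta^{1/2}\times\delta^{1/2}$ rectangles, and $(\omega_k\times H_j)\cap\supp(s^{(\delta,t)}_\ell)\neq\emptyset$ forces $\omega_k$ to meet the $\xi_1$-projection of the support (inside $t^{-1}\omega_\ell$, of length $\le\delta^{1/2}$) and $H_j$ to meet its $\xi_2$-projection (of length $\le(\|\psi'\|_{L^\infty(I)}+c_*)\delta^{1/2}$); counting these two constraints separately gives $\card(\mathscr G_{\ell,t})\le C$.

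For the decomposition I would show that the sets $P^{\ell,k,j}_h$, indexed by all $(h,k,j)$, partition $\widetilde\Delta_\ell\cap([-b,b]\times[c,d])$ up to null sets, and that this set contains $\supp(s^{(\delta,t)}_\ell)$. Pairwise disjointness up to null sets is automatic because $\{S^\ell_h\}_h$ and $\{\omega_k\times H_j\}_{k,j}$ are tilings (their members intersect only along boundary lines), so $P^{\ell,k,j}_h\cap P^{\ell,k',j'}_{h'}$ is null unless $(h,k,j)=(h',k',j')$; and $\bigcup_{h,k,j}P^{\ell,k,j}_h=\big(\bigcup_hS^\ell_h\big)\cap\big(\bigcup_{k,j}\omega_k\times H_j\big)\cap\widetilde\Delta_\ell\cap(\mathbb R\times[c,d])=\widetilde\Delta_\ell\cap([-b,b]\times[c,d])$. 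The inclusion $\supp(s^{(\delta,t)}_\ell)\subset\widetilde\Delta_\ell$ follows from Lemma \ref{lem3.1} together with the fact that $\widetilde\Delta_\ell$ is a union of cones, hence invariant under $\xi\mapsto t^{-1}\xi$; the $\xi_1$-coordinate of a support point lies in $t^{-1}\omega_\ell\subset[-b,b]$; and its $\xi_2$-coordinate lies in $t^{-1}\big(\psi(I)+[0,c_*\delta]\big)\subset[c,d]$ in the case-$(B.1)$ normalization. Consequently a.e.\ $\xi\in\supp(s^{(\delta,t)}_\ell)$ lies in exactly one $P^{\ell,k,j}_h$, whose index automatically satisfies $h\in\mathscr F_{\ell,t}$ and $(k,j)\in\mathscr G_{\ell,t}$; thus $\sum_{(k,j)\in\mathscr G_{\ell,t}}\sum_{h\in\mathscr F_{\ell,t}}\chi_{P^{\ell,k,j}_h}=1$ a.e.\ on $\supp(s^{(\delta,t)}_\ell)$, which is the asserted identity.

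The step needing the most care is essentially bookkeeping: checking that the slab width and the rectangle dimensions in the support localization depend only on $c_*$, $\|\psi'\|_{L^\infty(I)}$ and $\|\psi''\|_{L^\infty(I)}$, hence are uniform in both $\ell$ and $t\in[1,2]$; and verifying the minor point that $\supp(s^{(\delta,t)}_\ell)\subset\mathbb R\times[c,d]$ for all $t\in[1,2]$, which one secures by keeping $\psi(I)$ well inside $(2c,d)$ when reducing to case $(B.1)$ via the partition of unity. This is the two-dimensional analogue of the support lemmas of \cite{Sa}, and every estimate involved is an elementary consequence of $\psi\in C^\infty(I)$.
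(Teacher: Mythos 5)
Your argument is correct and essentially the paper's own proof: the paper likewise localizes $\supp(s^{(\delta,t)}_\ell)$ in a ball of radius $c\delta^{1/2}$ centered at $t^{-1}(a_{\ell-1},\psi(a_{\ell-1}))$ (giving $\card(\mathscr G_{\ell,t})\leq C$) and in a $c\delta\times c\delta^{1/2}$ parallelogram whose long sides are parallel to $(1,\psi'(a_{\ell-1}))$, i.e. the same tangent-line/Taylor approximation you use (giving $\card(\mathscr F_{\ell,t})\leq C$), and derives the decomposition from Lemma \ref{lem3.1}, the dilation invariance $t^{-1}\widetilde{\Delta}_\ell=\widetilde{\Delta}_\ell$, and the non-overlapping of the sets $P^{\ell,k,j}_h$. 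Your only addition is to make explicit the harmless normalization ensuring $\supp(s^{(\delta,t)}_\ell)\subset \Bbb R\times[c,d]$ for all $t\in[1,2]$ (taking $c$ small enough, $d$ large enough in the reduction to case $(B.1)$), a point the paper uses implicitly when asserting that the rectangles $\omega_k\times H_j$ cover the support.
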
 
\begin{proof}  
Since $\supp(s^{(\delta,t)}_\ell)\subset B(t^{-1}(a_{\ell-1},\psi(a_{\ell-1})), c\delta^{1/2})$ for some $c>0$, we have $\card(\mathscr G_{\ell, t})\leq C$, 
where $B(\xi,r)$ denotes a ball of radius $r$ centered at $\xi$.     
\par 
The slope of the tangent line 
to the curve $t^{-1}\Gamma$ at $t^{-1}(a_{\ell-1}, 
\psi(a_{\ell-1}))$ is $\psi'(a_{\ell-1})$. 
Thus $\supp(s^{(\delta,t)}_\ell)$ is  contained in a parallelogram centered 
at  $t^{-1}(a_{\ell-1}, \psi(a_{\ell-1}))$ with side lengths $c\delta$ and 
$c\delta^{1/2}$, where the longer sides are parallel to the vector 
$(1, \psi'(a_{\ell-1}))$. 
From this we can deduce that $\card(\mathscr F_{\ell, t})\leq C$. 
\par 
By Lemma \ref{lem3.1}, 
$\supp(s^{(\delta,t)}_\ell)=t^{-1}\supp(s^{(\delta)}_\ell) 
\subset t^{-1}\widetilde{\Delta}_\ell=\widetilde{\Delta}_\ell$.  
Also, $\supp(s^{(\delta,t)}_\ell)\subset \cup_{(k,j)\in \mathscr G_{\ell, t}}
(\omega_k\times H_j)$,    $\supp(s^{(\delta,t)}_\ell)\subset \cup_{h \in 
\mathscr F_{\ell, t}}S^\ell_h$. These observations prove the equation for 
decomposing $s^{(\delta,t)}_\ell$, since $P^{\ell,k,j}_h= 
(\omega_k\times H_j)\cap S^\ell_h \cap \widetilde{\Delta}_\ell$ and 
the sets $P^{\ell,k,j}_h$ are mutually non-overlapping when $\ell$ is fixed. 
\end{proof} 
\begin{lemma}\label{lem3.3}  
Let 
\begin{align*} 
E(n,\ell,h)&= \{t\in [2^n,2^{n+1}]: (2^{-n}P^{\ell}_h) \cap 
\supp(\phi^{(\delta,t)}) \neq \emptyset\} 
\\ 
&=\{t\in [2^n,2^{n+1}]: (t2^{-n}P^{\ell}_h) \cap 
\supp(\phi^{(\delta)}) \neq \emptyset\},   
\end{align*}  
where $\phi^{(\delta,t)}(\xi)=\phi^{(\delta)}(t\xi)$. 
Then there exists a constant $C$ independent of $n,\ell,h$ such that   
\begin{equation*} 
\int_0^\infty \chi_{E(n,\ell,h)}(t) \, \frac{dt}{t} \leq C\delta. 
\end{equation*}  
\end{lemma}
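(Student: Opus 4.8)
The plan is to pass to dilations of the fixed set $P^\ell_h$ and extract the factor $\delta$ from the non-tangency condition $(A.2)$. Changing variables $s=t2^{-n}$ gives $dt/t=ds/s$, and $t\in E(n,\ell,h)$ holds precisely when $s\in[1,2]$ and $(sP^\ell_h)\cap\supp(\phi^{(\delta)})\neq\emptyset$; so it suffices to prove
\[
\bigl|\{\,s\in[1,2]:(sP^\ell_h)\cap\supp(\phi^{(\delta)})\neq\emptyset\,\}\bigr|\le C\delta .
\]
Recall that in the situation of $(B.1)$ with $\supp(\Phi)\subset[0,c_*]$ one has $\supp(\phi^{(\delta)})\subset\{\xi:\xi_1\in I,\ 0\le\xi_2-\psi(\xi_1)\le c_*\delta\}$. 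Fix $\ell$ with $[a_{\ell-2},a_{\ell+1}]\subset I$, and write $u_\ell(\xi)=\xi_2-\psi'(a_{\ell-1})\xi_1$ and $c_\ell=\psi(a_{\ell-1})-a_{\ell-1}\psi'(a_{\ell-1})$; by $(A.2)$ we have $|c_\ell|\ge c_1>0$ uniformly in $\ell$. Note that $\zeta\in S^\ell_h$ is the same as $u_\ell(\zeta)\in[h\delta,(h+1)\delta]$, and that $\widetilde\Delta_\ell$ is invariant under positive dilations.

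The key geometric step is: if $\xi\in\supp(\phi^{(\delta)})\cap\widetilde\Delta_\ell$, then $|u_\ell(\xi)-c_\ell|\le C\delta$. Indeed, write $\xi=(\xi_1,\psi(\xi_1)+\kappa)$ with $\xi_1\in I$, $0\le\kappa\le c_*\delta$; since $\xi\in\widetilde\Delta_\ell$, the point $\xi$ lies on the ray through $(\theta,\psi(\theta))$ for some $\theta\in[a_{\ell-2},a_{\ell+1}]$, so $\Psi(\theta)=\theta/\psi(\theta)=\xi_1/\xi_2=\xi_1/(\psi(\xi_1)+\kappa)$. Using that $\psi$ is bounded below by a positive constant on $I$ one gets $|\Psi(\theta)-\Psi(\xi_1)|\le C\delta$, and since $\Psi'$ is bounded below by a positive constant on the compact interval $I$, the inverse of $\Psi$ is Lipschitz, whence $|\xi_1-\theta|\le C\delta$ and therefore $|\xi_1-a_{\ell-1}|\le C\delta^{1/2}$. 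A second-order Taylor expansion of $\psi$ about $a_{\ell-1}$ now gives $\psi(\xi_1)-\psi'(a_{\ell-1})\xi_1=c_\ell+O\!\bigl((\xi_1-a_{\ell-1})^2\bigr)=c_\ell+O(\delta)$, and adding $\kappa=O(\delta)$ proves the claim. Only the boundedness of $\psi''$ is used here, consistent with the hypotheses of the lemma.

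To conclude, suppose $s\in[1,2]$ and $\zeta\in P^\ell_h$ with $s\zeta\in\supp(\phi^{(\delta)})$. From $\zeta\in S^\ell_h$ we get $u_\ell(\zeta)\in[h\delta,(h+1)\delta]$, hence $u_\ell(s\zeta)=s\,u_\ell(\zeta)$ satisfies $|u_\ell(s\zeta)-sh\delta|\le s\delta\le 2\delta$; and $s\zeta\in\widetilde\Delta_\ell$, so the geometric step gives $|u_\ell(s\zeta)-c_\ell|\le C\delta$. Thus $|sh\delta-c_\ell|\le C'\delta$. If $E(n,\ell,h)\neq\emptyset$ there is such an $s$, and then $|h|\delta\ge s^{-1}(|c_\ell|-C'\delta)\ge c_1/4$ once $\delta$ is small; hence the affine function $s\mapsto sh\delta$ has slope of modulus $\ge c_1/4$, and $\{s\in[1,2]:|sh\delta-c_\ell|\le C'\delta\}$ is an interval of length $\le 8C'\delta/c_1$. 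Consequently
\[
\int_0^\infty\chi_{E(n,\ell,h)}(t)\,\frac{dt}{t}
=\int_1^2\chi_{\{s:\,|sh\delta-c_\ell|\le C'\delta\}}(s)\,\frac{ds}{s}\le C\delta ,
\]
uniformly in $n,\ell,h$; for $\delta$ bounded below the bound is trivial since the left side never exceeds $\log 2$.

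The step I expect to be the main obstacle is the geometric claim: one must see that membership of $\xi$ in the $\delta$-tube $\supp(\phi^{(\delta)})$ together with $\xi\in\widetilde\Delta_\ell$ forces $\xi_1$ into an $O(\delta^{1/2})$-arc about $a_{\ell-1}$ — this is exactly where $\Psi'>0$, the radial transversality of $\Gamma$, is used — and that the curvature bound then upgrades this to $O(\delta)$-control of the tangential coordinate $u_\ell(\xi)$. Once this is in hand, condition $(A.2)$ guarantees that the dilation $s\mapsto s\,u_\ell(\zeta)$ is genuinely transversal rather than nearly stationary, and the gain of $\delta$ follows at once.
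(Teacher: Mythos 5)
Your proof is correct, but it takes a genuinely different route from the paper's. The paper deduces Lemma \ref{lem3.3} from three auxiliary facts about the radial foliation by dilates of $\Gamma$: Lemma \ref{lem3.4} (the $\delta$-tube around $\Gamma$ is contained in $\cup\{s\Gamma:|1-s|\le B_1\delta\}$), Lemma \ref{lem3.5} (distinct dilates $r\Gamma$, $s\Gamma$ are disjoint, proved via Rolle's theorem from $(A.2)$), and Lemma \ref{lem3.6} (if $P^{\ell}_h$ meets $\tau\supp(\phi^{(\delta)})$ then all of $P^{\ell}_h$ lies in the thin annulus $\cup\{s\tau\Gamma:|1-s|\le D\delta\}$, proved by comparing $\Gamma_\ell$ with the tangent line $l_{a_{\ell-1}}$ through the intercept computation \eqref{e3.7}); combining these pins $t$ to an interval of relative length $O(\delta)$. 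You instead work with the tangential affine coordinate $u_\ell(\xi)=\xi_2-\psi'(a_{\ell-1})\xi_1$: you show that on $\supp(\phi^{(\delta)})\cap\widetilde{\Delta}_\ell$ it is within $O(\delta)$ of the intercept $c_\ell=\psi(a_{\ell-1})-a_{\ell-1}\psi'(a_{\ell-1})$ (using $|\Psi'|\ge c>0$ to localize $\xi_1$ to an $O(\delta^{1/2})$-neighborhood of $a_{\ell-1}$, then Taylor expansion), that on $sP^{\ell}_h$ it equals $s\,u_\ell(\zeta)\in[sh\delta,s(h+1)\delta]$ by homogeneity, and that $(A.2)$ gives $|c_\ell|\ge c_1>0$, which forces $|h|\delta\ge c_1/4$ and hence confines $s$ to an interval of length $O(\delta)$. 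This is more self-contained — it bypasses Lemmas \ref{lem3.5} and \ref{lem3.6} entirely — at the cost of not producing the containment $P^{\ell}_h\subset\cup\{s\tau\Gamma:|1-s|\le D\delta\}$; the paper's Lemmas \ref{lem3.4} and \ref{lem3.5} also do duty elsewhere (e.g.\ in Section \ref{sec6}). Both arguments exploit $(A.2)$ in the same essential way. One cosmetic slip: the final display should read ``$\le$'' rather than ``$=$'', since $\{s\in[1,2]:(sP^{\ell}_h)\cap\supp(\phi^{(\delta)})\ne\emptyset\}$ is only contained in, not equal to, $\{s:|sh\delta-c_\ell|\le C'\delta\}$.
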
  
The proof of this requires the following results; Lemmas \ref{lem3.4}, 
\ref{lem3.5} and \ref{lem3.6} below. 
In Lemmas \ref{lem3.4} and \ref{lem3.5}, we consider the cases $(B.i)$, 
$1\leq i\leq 4$, together assuming $(A.2)$.  
\begin{lemma}\label{lem3.4} 
Let $I'$ be a compact interval such that $I'\subset I^\circ$. 
Then there exists a positive constant $B_1$ such that 
\begin{equation*} 
\{(\xi_1,\xi_2): |\xi_2-\psi(\xi_1)|\leq \delta, \, \xi_1\in I'\}
\subset \cup\{s\Gamma : |1-s|\leq B_1\delta\},  
\end{equation*}
if $\delta$ is small enough. 
\end{lemma}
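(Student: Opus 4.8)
The plan is to exploit condition $(A.2)$ through the function $\Psi(t)=t/\psi(t)$, whose derivative is nowhere zero on $I$ (as noted in the introduction, $\psi(t)-t\psi'(t)\neq 0$ gives $\Psi'(t)\neq 0$). The geometric content of the claim is that the horizontal $\delta$-neighborhood of $\Gamma$ above $I'$ is swept out by dilates $s\Gamma$ with $s$ in a $\delta$-window of $1$; this should hold precisely because $\Gamma$ is transverse to the radial direction, which is exactly what $(A.2)$ encodes. Concretely, given $(\xi_1,\xi_2)$ with $\xi_1\in I'$ and $|\xi_2-\psi(\xi_1)|\le\delta$, I want to find $s$ near $1$ and $t\in I$ with $(\xi_1,\xi_2)=s(t,\psi(t))$, i.e. $st=\xi_1$ and $s\psi(t)=\xi_2$; eliminating $s$ this means $\xi_1/t\cdot\psi(t)=\xi_2$, that is, $\xi_1\psi(t)/t=\xi_2$, or $\Psi(t)=\xi_1/\xi_2$ (after rearranging, using $\Psi(t)=t/\psi(t)$ one gets $t/\psi(t)\cdot\xi_2 = \xi_1$, so $\Psi(t)=\xi_1/\xi_2$).

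First I would fix the setup: since $I'\subset I^\circ$ is compact, choose a slightly larger compact interval $I''$ with $I'\subset (I'')^\circ\subset I''\subset I^\circ$, on which $\psi$, $\psi'$, and $\Psi$ are smooth with $\Psi'$ bounded away from $0$; also $\psi$ is bounded away from $0$ on $I''$ by $(A.1)$–type reasoning (here it follows from $(B.i)$, which keeps $\Gamma$ in a region where the relevant coordinate is bounded below). For $(\xi_1,\xi_2)$ in the given neighborhood with $\delta$ small, $\xi_2$ is within $\delta$ of $\psi(\xi_1)$, hence in a fixed compact subset of $(0,\infty)$ (or of $(-\infty,0)$ in cases $(B.2),(B.4)$ — the sign is fixed within each case), so $\xi_1/\xi_2$ is well-defined and lies near $\Psi(\xi_1)=\xi_1/\psi(\xi_1)$: indeed $|\xi_1/\xi_2 - \xi_1/\psi(\xi_1)| = |\xi_1|\,|\xi_2-\psi(\xi_1)|/(|\xi_2||\psi(\xi_1)|)\le C\delta$. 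Since $\Psi$ is a diffeomorphism from $I''$ onto its image with $|\Psi'|\ge c>0$, and $\Psi(\xi_1)$ is in the interior of $\Psi(I'')$ at a definite distance from the endpoints (because $\xi_1\in I'\Subset I''$), there is a unique $t=t(\xi_1,\xi_2)\in I''\subset I$ with $\Psi(t)=\xi_1/\xi_2$, and $|t-\xi_1|=|\Psi^{-1}(\xi_1/\xi_2)-\Psi^{-1}(\Psi(\xi_1))|\le c^{-1}C\delta$. Setting $s=\xi_1/t$, one checks $s(t,\psi(t))=(\xi_1, \xi_1\psi(t)/t)=(\xi_1,\xi_2)$ using $\Psi(t)=\xi_1/\xi_2$, and $|s-1|=|\xi_1-t|/|t|\le B_1\delta$ since $t$ stays in a compact set bounded away from $0$. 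This exhibits $(\xi_1,\xi_2)\in s\Gamma$ with $|1-s|\le B_1\delta$, proving the inclusion.

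The main obstacle, and the only point requiring care, is ensuring that the solution $t$ actually lands in $I$ (not merely near it) and that $s$ stays near $1$ uniformly — both hinge on the quantitative nondegeneracy $|\Psi'|\ge c>0$ on a compact neighborhood of $I'$ and on $\psi$ (equivalently the relevant coordinate) being bounded away from $0$ there, which is where $(A.2)$ and the normalization $(B.i)$ enter. The buffer $I'\Subset I^\circ$ is precisely what gives room to absorb the $O(\delta)$ displacement of $t$ away from $\xi_1$ while remaining inside $I$, so "$\delta$ small enough" in the statement means small relative to $\mathrm{dist}(I',\partial I)$ and the lower bound on $|\Psi'|$; everything else is a routine mean value estimate.
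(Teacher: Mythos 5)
Your overall strategy is the same as the paper's: given $(\xi_1,\xi_2)$ with $|\xi_2-\psi(\xi_1)|\le\delta$, solve $\Psi(t)=\xi_1/\xi_2$ for $t$ near $\xi_1$ (the paper does this with the intermediate value theorem, you with the inverse of $\Psi$), then scale. However, there is a genuine gap in your final step. You conclude $|s-1|=|\xi_1-t|/|t|\le B_1\delta$ ``since $t$ stays in a compact set bounded away from $0$.'' In case $(B.1)$ (and $(B.2)$) the interval $I\subset(-b,b)$ may well contain $0$ — the paper explicitly allows this, e.g.\ in Lemma 3.1 and Lemma 3.8 it distinguishes $a_{\ell-1}\ge 0$, $a_\ell\le 0$, $a_{\ell-1}<0<a_\ell$ — so $t$ is \emph{not} bounded away from $0$, and dividing $|\xi_1-t|\lesssim\delta$ by $|t|$ gives nothing when $|t|$ is comparable to $\delta$. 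Moreover at $\xi_1=0$ your formula $s=\xi_1/t$ is $0/0$ and the construction is not even defined; the paper treats exactly this degenerate case ($s=0$, hence $t=0$) separately, scaling vertically by $R=1+\kappa/\psi(0)$.

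The gap is fixable within your scheme, but it requires keeping the factor $|\xi_1|$ that you discarded: one has $|\Psi(t)-\Psi(\xi_1)|=|\xi_1|\,|\xi_2-\psi(\xi_1)|/(|\xi_2|\,\psi(\xi_1))\le C|\xi_1|\delta$, hence $|t-\xi_1|\le c^{-1}C|\xi_1|\delta$, so $|t|\ge|\xi_1|/2$ for small $\delta$ and then $|s-1|\le 2c^{-1}C\delta$ uniformly, with $\xi_1=0$ handled directly as above. The paper sidesteps the issue differently and more cleanly: it never divides by $|t|$ or $|s|$, but instead observes that $(s,\psi(s))$ and $r(s,\psi(s))=(\xi_1,\xi_2)$ both lie in a ball of radius $c\delta$ about $(\xi_1,\psi(\xi_1))$, so $|1-r|\,|(s,\psi(s))|\le C\delta$, and then uses that $|(s,\psi(s))|\ge\psi(s)\ge c>0$ in case $(B.1)$ — the norm of the point on $\Gamma$, not the parameter, is bounded below. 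As written, your proof establishes the lemma only when $I'$ stays away from $0$ (and, for cases $(B.3)$--$(B.4)$, you would need to carry out the switch to $\Psi_*(t)=\psi(t)/t$ that you only mention in passing, as does the paper).
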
 
\begin{proof} 
Here we give the proof for the case $(B.1)$. The other cases can be handled 
similarly. 
Let $t\in I'$ and $\kappa \in [-\delta, \delta]$. Let 
$m_0=\min(\Psi(A), \Psi(B))$, $m_1=\max(\Psi(A), \Psi(B))$. 
Then $m_0+\epsilon_0<t/\psi(t)<m_1-\epsilon_0$ for some $\epsilon_0>0$.  
(In the cases $(B.3)$, $(B.4)$, we use $\Psi_*(t)=\psi(t)/t$ instead of 
$\Psi$.)  
Thus, if $\delta$ is small enough, we have $t/(\psi(t)+\kappa)\in [m_0, m_1]$. 
So, by the intermediate value theorem there exists $s\in I=[A,B]$ such that 
$\Psi(s)=s/\psi(s)= t/(\psi(t)+\kappa)$. 
\par 
If $s\neq 0$, put 
$r=t/s=(\psi(t)+\kappa)/\psi(s)$. Then  we have 
$r(s,\psi(s))=(t,\psi(t)+\kappa)$ and  
\begin{equation*} 
\left|\frac{t}{\psi(t)}-\frac{s}{\psi(s)}\right|=
\left|\frac{t\kappa}{\psi(t)(\psi(t)+\kappa)}\right| \leq C\delta. 
\end{equation*} 
Thus we have $|t-s|\leq C\delta$, since 
$c_1\leq |\Psi'|\leq c_2$ with positive constants $c_1, c_2$, which 
follows by the assumption that $\Psi'\neq 0$ on $I$.  This implies that 
\begin{equation*} 
(s, \psi(s)), (t,\psi(t)+\kappa)\in B((t,\psi(t)), c\delta).  
\end{equation*} 
It follows that 
\begin{equation*} 
\left|(s, \psi(s))- r(s,\psi(s))\right|=|1-r|\left|(s,\psi(s))\right| 
\leq C\delta, 
\end{equation*} 
and hence $|1-r|\leq C\delta$. This proves the assertion of the lemma when 
$s\neq 0$. 
\par 
If $s=0$, then $t=0$ and $R(0,\psi(0))=(0,\psi(0)+\kappa)$ with $R=
(\psi(0)+\kappa)/\psi(0)=1+\kappa/\psi(0)$, $|\kappa/\psi(0)|\leq C\delta$.  
This applies to the case $s=0$. 
\end{proof} 

\begin{lemma} \label{lem3.5} 
If $r, s>0$ and $r\neq s$, then $r\Gamma \cap s\Gamma =\emptyset$. 
\end{lemma}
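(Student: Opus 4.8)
The plan is to reduce the non-intersection property to injectivity of the slope function attached to $\Gamma$. Suppose, toward a contradiction, that some point $\xi$ lies in $r\Gamma\cap s\Gamma$ with $r,s>0$. Then $\xi=r(t,\psi(t))=s(u,\psi(u))$ for suitable $t,u\in I$, and equating the two coordinates gives $rt=su$ and $r\psi(t)=s\psi(u)$. I note first that, under any of the hypotheses $(B.1)$--$(B.4)$, the vector $(t,\psi(t))$ is nonzero for every $t\in I$: in $(B.1),(B.2)$ because $\psi$ is bounded away from $0$ on $I$, and in $(B.3),(B.4)$ because $t$ is bounded away from $0$. In particular $\xi\neq 0$.

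Next I would eliminate $r,s$ by taking the quotient of the two coordinate relations (that is, multiplying $rt=su$ by $\psi(u)$ and $r\psi(t)=s\psi(u)$ by $u$ and cancelling $r$ to get $t\psi(u)=u\psi(t)$). In cases $(B.1)$ and $(B.2)$, where $\psi$ never vanishes on $I$, this yields $\Psi(t)=\Psi(u)$ with $\Psi(t)=t/\psi(t)$; in cases $(B.3)$ and $(B.4)$, where $t\neq 0$ throughout $I$, it yields $\Psi_*(t)=\Psi_*(u)$ with $\Psi_*(t)=\psi(t)/t$, exactly the split used in the proof of Lemma \ref{lem3.4}. The key point is that $(A.2)$ makes the relevant slope function strictly monotone: one has $\Psi'=(\psi-t\psi')/\psi^2$ and $\Psi_*'=(t\psi'-\psi)/t^2$, and each of these is nowhere zero on $I$ by $(A.2)$; being continuous and nonvanishing on the interval $I$, the slope function has constant sign, hence is injective. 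Therefore $t=u$.

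It then remains only to substitute $t=u$ into $rt=su$ and $r\psi(t)=s\psi(u)$, which become $rt=st$ and $r\psi(t)=s\psi(t)$; since $(t,\psi(t))\neq(0,0)$, at least one of the two coordinates is nonzero, forcing $r=s$ and contradicting $r\neq s$. I do not anticipate a genuine obstacle here: the only step requiring a moment's care is the passage from ``$\Psi'\neq 0$ on $I$'' to ``$\Psi$ injective on $I$'', which uses that $I$ is an interval and $\Psi'$ is continuous, together with the (case-by-case trivial) bookkeeping of which of $\Psi$, $\Psi_*$ is the appropriate slope function and the observation that $\Gamma$ avoids the origin.
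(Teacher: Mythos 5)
Your proposal is correct and follows essentially the same route as the paper: from a common point of $r\Gamma$ and $s\Gamma$ you deduce $\Psi(t)=\Psi(u)$ (or $\Psi_*(t)=\Psi_*(u)$ in cases $(B.3)$, $(B.4)$) and rule this out using $(A.2)$ via nonvanishing of $\Psi'$, the only cosmetic difference being that the paper invokes Rolle's theorem while you argue via strict monotonicity, and the paper writes out only case $(B.1)$ explicitly.
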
 
\begin{proof} 
We prove the lemma for the case $(B.1)$. The results in the other cases 
can be shown analogously. 
The proof is by contradiction.  Suppose that $r\Gamma \cap s\Gamma \neq  
\emptyset$ for some $r, s>0$ with $r\neq s$. 
We may assume that $r\Gamma \cap \Gamma \neq \emptyset$ for some $r>1$. 
Then $(t,\psi(t))=r(u,\psi(u))$ for some $t, u \in I$. 
We see that $u\neq 0$, since if $u=0$, then $t=0$ and $\psi(0)=r\psi(0)$, 
which implies that $r=1$. So, $u\neq 0$, $u\neq t$ and  $\Psi(t)=\Psi(u)$. 
By Rolle's theorem there exists 
$t_0$ between $t$ and $u$ such that $\Psi'(t_0)=0$. This contradicts 
the assumption $(A.2)$, since $\Psi'(t)=(\psi(t)-t\psi'(t))/\psi(t)^2$.   
\end{proof} 

\begin{lemma} \label{lem3.6}  
Let $\tau \in [1/2, 1]$. If $P^{\ell}_h\cap (\tau \supp(\phi^{(\delta)}))\neq 
\emptyset$, then there exists a positive constant $D$ such that  
\begin{equation*} 
P^{\ell}_h \subset \cup\{s\tau\Gamma : 1-D\delta \leq s\leq 1+D\delta\}.  
\end{equation*} 
\end{lemma}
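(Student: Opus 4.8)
The plan is to reduce the inclusion to a one–dimensional estimate along the rays emanating from the origin. By definition of $\Delta_{\ell'}$, every point of $\widetilde{\Delta}_\ell$ has a representation $\xi=r(t,\psi(t))$ with $r>0$ and $t\in[a_{\ell-2},a_{\ell+1}]\subset I$, and this representation is unique (if $\xi$ lay on both $r\Gamma$ and $r'\Gamma$, then $r=r'$ by Lemma \ref{lem3.5}, and then $t=t'$ since $\Gamma$ is a graph). For such a $\xi=(rt,r\psi(t))$ one has the identity
\begin{equation*}
\xi_2-\psi'(a_{\ell-1})\xi_1=r\bigl(\psi(t)-\psi'(a_{\ell-1})t\bigr)=:r\,g_\ell(t),
\end{equation*}
so membership $\xi\in P^\ell_h$ is equivalent to $\xi\in\widetilde{\Delta}_\ell$ together with $r\,g_\ell(t)\in[h\delta,(h+1)\delta]$. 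Writing $\xi=(r/\tau)\,\tau(t,\psi(t))$, the conclusion $\xi\in s\tau\Gamma$ with $1-D\delta\le s\le1+D\delta$ is exactly the assertion $|1-r/\tau|\le D\delta$; since $\tau\in[1/2,1]$ it is enough to prove $|r-\tau|\le D'\delta$ with $D'$ uniform in $\ell,h,\tau$.

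First I would record the properties of $g_\ell$. Assumption $(A.2)$ says precisely that $q(t)=\psi(t)-t\psi'(t)$ is nonvanishing on the compact interval $I$, hence $|q|\ge q_0>0$ there; since $g_\ell(a_{\ell-1})=q(a_{\ell-1})$, this gives $|g_\ell(a_{\ell-1})|\ge q_0$ uniformly in $\ell$. On $[a_{\ell-2},a_{\ell+1}]$, an interval of length $3\delta^{1/2}$ containing $a_{\ell-1}$, one has $|g_\ell'(s)|=|\psi'(s)-\psi'(a_{\ell-1})|\le2\|\psi''\|_\infty\delta^{1/2}$ and, by Taylor's formula, $|g_\ell(t)-g_\ell(a_{\ell-1})|\le2\|\psi''\|_\infty\delta$. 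Hence for $\delta$ small (independently of $\ell$),
\begin{equation*}
\tfrac12 q_0\le|g_\ell(t)|\le C_0,\qquad |g_\ell(t)-g_\ell(t')|\le C_1\delta\quad\bigl(t,t'\in[a_{\ell-2},a_{\ell+1}]\bigr),
\end{equation*}
with $C_0,C_1$ depending only on $\psi$ and $I$.

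Next I would fix the radial scale via an anchor point. By hypothesis there is $\xi^*\in P^\ell_h\cap\tau\supp(\phi^{(\delta)})$; write $\xi^*=r^*(t^*,\psi(t^*))$ with $r^*>0$, $t^*\in[a_{\ell-2},a_{\ell+1}]$. Since $\supp(b)$ projects onto a compact $I'\subset I^\circ$, we have $\supp(\phi^{(\delta)})\subset\{|\xi_2-\psi(\xi_1)|\le\delta,\ \xi_1\in I'\}$, so Lemma \ref{lem3.4} gives (for $\delta$ small) $\supp(\phi^{(\delta)})\subset\bigcup\{s\Gamma:|1-s|\le B_1\delta\}$, whence $\xi^*\in s^*\tau\Gamma$ for some $s^*$ with $|1-s^*|\le B_1\delta$. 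As $\xi^*\neq0$ lies on both $r^*\Gamma$ and $s^*\tau\Gamma$, Lemma \ref{lem3.5} forces $r^*=s^*\tau$, so $|r^*-\tau|\le B_1\delta$ and in particular $r^*\le2$. Now for any $\xi=r(t,\psi(t))\in P^\ell_h$, since $\xi$ and $\xi^*$ lie in the same strip $S^\ell_h$, both $r\,g_\ell(t)$ and $r^*g_\ell(t^*)$ lie in $[h\delta,(h+1)\delta]$, so
\begin{equation*}
|r-r^*|\,|g_\ell(t)|\le\bigl|r\,g_\ell(t)-r^*g_\ell(t^*)\bigr|+r^*\,|g_\ell(t)-g_\ell(t^*)|\le\delta+2C_1\delta.
\end{equation*}
Dividing by $|g_\ell(t)|\ge q_0/2$ yields $|r-r^*|\le C_2\delta$, hence $|r-\tau|\le(C_2+B_1)\delta$; with $\tau\ge1/2$ this gives $|1-r/\tau|\le D\delta$, $D=2(C_2+B_1)$, i.e.\ $\xi\in s\tau\Gamma$ with $s=r/\tau$ and $1-D\delta\le s\le1+D\delta$. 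Since $\xi\in P^\ell_h$ was arbitrary, the lemma follows.

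The one genuinely delicate point is the uniform lower bound $|g_\ell(t)|\ge q_0/2$: this is exactly the content of $(A.2)$ — indeed $g_\ell(a_{\ell-1})=\psi(a_{\ell-1})-a_{\ell-1}\psi'(a_{\ell-1})$ is, geometrically, the statement that the tangent to $\Gamma$ at $(a_{\ell-1},\psi(a_{\ell-1}))$ misses the origin — and it is what makes membership in the single strip $S^\ell_h$ pin down the scale $r$ with a constant independent of $\ell$, $h$, $\tau$. Everything else is routine Taylor estimation on intervals of length $O(\delta^{1/2})$ together with the uniqueness of the polar-type representation on $\widetilde{\Delta}_\ell$ supplied by Lemma \ref{lem3.5}; note in particular that only the boundedness of $\psi''$ on $I$, not $\psi''\neq0$, is needed for this lemma.
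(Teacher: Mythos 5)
Your proof is correct, and it reaches the conclusion by a noticeably different internal mechanism than the paper, even though both arguments rest on the same external inputs (Lemma \ref{lem3.4}, Lemma \ref{lem3.5}, condition $(A.2)$, and boundedness of $\psi''$ on $I$). You work in the radial parametrization $\xi=r(t,\psi(t))$ of $\widetilde{\Delta}_\ell$ and observe that membership in the strip $S^\ell_h$ reads $r\,g_\ell(t)\in[h\delta,(h+1)\delta]$ with $g_\ell(t)=\psi(t)-\psi'(a_{\ell-1})t$; since $g_\ell(a_{\ell-1})=\psi(a_{\ell-1})-a_{\ell-1}\psi'(a_{\ell-1})$ is bounded below by $(A.2)$ and $g_\ell'(a_{\ell-1})=0$ makes $g_\ell$ vary only by $O(\delta)$ on an interval of length $O(\delta^{1/2})$, the strip condition pins $r$ to within $O(\delta)$ of the value $r^*$ at an anchor point of $P^\ell_h\cap\tau\supp(\phi^{(\delta)})$, and one application of Lemmas \ref{lem3.4} and \ref{lem3.5} identifies $r^*$ with $\tau$ up to $O(\delta)$. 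The paper instead argues geometrically with the tangent line $l_{a_{\ell-1}}$: it shows via the explicit intersection formula \eqref{e3.7} (where $(A.2)$ enters as the nonvanishing denominator $\psi(a_k)-a_k\psi'(a_k)$) that $u^{-1}S^\ell_h\cap\widetilde{\Delta}_\ell$ lies in a $C\delta$-neighborhood $\Gamma_\ell(c_4)$ of the curve, and then applies Lemma \ref{lem3.4} a \emph{second} time to convert that neighborhood into the union of dilates $s\Gamma$ before rescaling by $u=v\tau$. Your route avoids the tangent-line containments and the second use of Lemma \ref{lem3.4}, and it makes transparent exactly where $(A.2)$ and $\|\psi''\|_{L^\infty(I)}<\infty$ (but not $\psi''\neq0$) are used, together with the uniformity in $\ell$, $h$, $\tau\in[1/2,1]$; the paper's version is more geometric and its intermediate containments mirror the structure of the arguments it imports from \cite{Sa}. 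Only cosmetic points: your Taylor constant $2\|\psi''\|_\infty\delta$ presumes the half-width $|t-a_{\ell-1}|\leq 2\delta^{1/2}$ with the quadratic remainder (which is fine since $g_\ell'(a_{\ell-1})=0$), and, as in the paper's own proof, the conclusion implicitly requires $\delta$ small enough for Lemma \ref{lem3.4} and for the lower bound $|g_\ell|\geq q_0/2$, which is consistent with the standing assumptions of Section \ref{sec3}.
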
 
\begin{proof} 
We have $\tau^{-1}P^{\ell}_h\cap (\supp(\phi^{(\delta)}))\neq \emptyset$. 
So by Lemma \ref{lem3.4},  $\tau^{-1}P^{\ell}_h\cap v\Gamma \neq \emptyset$ 
for some $v\in [1-B_1\delta, 1+B_1\delta]$. Thus 
 $u^{-1}P^{\ell}_h\cap \Gamma \neq \emptyset$ with $u=v\tau$. We note that 
\begin{equation*} 
u^{-1}S^\ell_h=\{(\xi_1,\xi_2): \psi'(a_{\ell-1})\xi_1+u^{-1}h\delta 
\leq \xi_2 \leq \psi'(a_{\ell-1})\xi_1+u^{-1}(h+1)\delta\}. 
\end{equation*} 
So, there exists $h_1\in [h,h+1]$ such that, if $L_{h_1}$ is the line defined 
by the equation $\xi_2=\psi'(a_{\ell-1})\xi_1+u^{-1}h_1\delta$, 
$\Gamma\cap u^{-1}\widetilde{\Delta}_\ell\cap L_{h_1}=\Gamma\cap 
\widetilde{\Delta}_\ell\cap L_{h_1} 
\neq \emptyset$. 
\par 
 Let $l_{a_{\ell-1}}$ be the tangent line to $\Gamma$ at 
$(a_{\ell-1}, \psi(a_{\ell-1}))$. Let $\Gamma_\ell=\Gamma\cap
\widetilde{\Delta}_\ell=
\{(t,\psi(t)): t\in[a_{\ell-2}, a_{\ell+1}]\}$ and $l_{a_{\ell-1}}^*= 
l_{a_{\ell-1}}\cap \widetilde{\Delta}_\ell$. Then we see that 
\begin{equation*} 
l_{a_{\ell-1}}^* \subset \Gamma_\ell(c_1):=\{(\xi_1,\xi_2)\in 
\widetilde{\Delta}_\ell: |\xi_2-\psi(\xi_1)|<c_1\delta, \, 
|a_{\ell-1}-\xi_1|\leq c_1\delta^{1/2}\} 
\end{equation*} 
for some $c_1>0$. 
Here we give the proof of this. 
Let $(\alpha_k, \beta_k)$ be the point of 
intersection of lines $l_{a_{\ell-1}}+(0,d)$, $|d|\leq c\delta$, 
and $a_k\xi_2=\psi(a_k)\xi_1$, 
$k=\ell+1$, $\ell-2$. 
It suffices to show that $|\alpha_k-a_{\ell-1}|\leq c\delta^{1/2}$ for some 
$c>0$.  For this we note that 
\begin{align}\label{e3.7}
\alpha_k-a_{\ell-1}
&=\frac{a_k\psi(a_{\ell-1})-\psi(a_k)a_{\ell-1}+a_kd}
{\psi(a_k)-a_k\psi'(a_{\ell-1})} 
\\ 
&=\frac{(a_k-a_{\ell-1})\psi(a_{\ell-1})+ 
a_{\ell-1}(\psi(a_{\ell-1})-\psi(a_k))+a_kd}
{\psi(a_k)-a_k\psi'(a_{k})+a_k(\psi'(a_k)-\psi'(a_{\ell-1}))}. \notag  
\end{align} 
From this with $d=0$ we have the inequality claimed, since 
$|\psi(a_k)-a_k\psi'(a_{k})|\geq c>0$ by $(A.2)$. 
\par 
Similarly we have 
\begin{equation*} 
\Gamma_\ell\subset l_{a_{\ell-1}}(c_2):= \{(\xi_1,\xi_2)\in 
\widetilde{\Delta}_\ell: 
\exists \xi_2' \,\, \text{such that} \,\, |\xi_2-\xi_2'|<c_2\delta, (\xi_1,\xi_2')\in l_{a_{\ell-1}}\} 
\end{equation*} 
for some $c_2>0$.  Since $\Gamma\cap \widetilde{\Delta}_\ell\cap L_{h_1} 
\neq \emptyset$ and 
\begin{equation*} 
\Gamma\cap \widetilde{\Delta}_\ell\cap L_{h_1} \subset \Gamma_\ell \subset 
l_{a_{\ell-1}}(c_2) 
\end{equation*} 
and the slopes of the lines  $L_{h_1}$ and $l_{a_{\ell-1}}$ are the same, the 
distance between $\xi_2$-intercepts of the lines $L_{h_1}$ and 
$l_{a_{\ell-1}}$ is
less than $c\delta$. Thus $u^{-1}S^\ell_h\cap \widetilde{\Delta}_\ell \subset 
l_{a_{\ell-1}}(c_3)$ for some $c_3>0$ and hence  
\begin{equation*} 
u^{-1}P^{\ell}_h \subset l_{a_{\ell-1}}(c_3)\subset \Gamma_\ell(c_4)
\end{equation*} 
for some $c_4>0$, where the second inclusion relation can be shown by using 
\eqref{e3.7}.  By applying Lemma \ref{lem3.4} we have 
\begin{align*} 
P^{\ell}_h &\subset \cup\{v\tau s\Gamma : 1-c_4 B_1\delta \leq s\leq 
1+c_4 B_1\delta\} 
\\ 
&= \cup\{\tau s\Gamma : v(1-c_4 B_1\delta) \leq s\leq 
v(1+c_4 B_1\delta)\}  
\\ 
&\subset \cup\{\tau s\Gamma : (1-B_1\delta)(1-c_4 B\delta) \leq s\leq 
(1+B_1\delta)(1+c_4 B_1\delta)\}  
\\ 
&\subset \cup\{\tau s\Gamma : 1-D\delta \leq s\leq 1+D\delta\}  
\end{align*}
for some positive constant $D$.  This completes the proof. 
\end{proof} 

\begin{proof}[Proof of Lemma $\ref{lem3.3}$]  
We may assume that $P^{\ell}_h \cap 
(\cup_{1\leq t\leq 2}\,t^{-1}\supp(\phi^{(\delta)}))\neq \emptyset$.  Then we 
can find $\tau\in [1/2, 1]$ such that $P^{\ell}_h \cap 
(\tau\supp(\phi^{(\delta)}))\neq \emptyset$, which implies by Lemma \ref{lem3.6} that  for $t\in [2^n, 2^{n+1}]$  
\begin{equation*} 
t2^{-n}P^{\ell}_h  \subset \cup\{st2^{-n}\tau\Gamma: 
1-D\delta\leq s\leq 1+D\delta\}.   
\end{equation*} 
So,  if $t\in E(n,\ell,h)$, by Lemmas \ref{lem3.4} and \ref{lem3.5}, 
\begin{equation*} 
\frac{1-B_1\delta}{2^{-n}\tau(1+D\delta)}\leq t \leq 
\frac{1+B_1\delta}{2^{-n}\tau(1-D\delta)}. 
\end{equation*} 
It follows that 
\begin{equation*} 
\int_0^\infty  \chi_{E(n,\ell,h)}(t) \, \frac{dt}{t} \leq 
\int_{\frac{1-B_1\delta}{2^{-n}\tau(1+D\delta)}}^
{\frac{1+B_1\delta}{2^{-n}\tau(1-D\delta)}} \, \frac{dt}{t}
\leq \log \frac{1+B_1\delta}{1-B_1\delta} + \log \frac{1+D\delta}{1-D\delta}
\leq C\delta. 
\end{equation*}  
\end{proof} 
\par   
Let $t\in [2^n,2^{n+1}]$. Then $2^{-n}t\in [1,2]$ and by Lemma \ref{lem3.2} we 
have  
\begin{align*} 
s^{(\delta,t)}_\ell(\xi)&=s^{(\delta,2^{-n}t)}_\ell(2^n\xi) 
\\ 
&=\sum_{{(k,j) \in \mathscr G_{\ell, 2^{-n}t}} }
\sum_{h\in \mathscr F_{\ell, 2^{-n}t}} 
\chi_{P^{\ell,k,j}_h}(2^n\xi) s^{(\delta,2^{-n} t)}_\ell(2^n\xi)  
\\ 
&=\sum_{{(k,j) \in \mathscr G_{\ell, 2^{-n}t}} }
\sum_{h\in \mathscr F_{\ell, 2^{-n}t}} 
\chi_{2^{-n}P^{\ell,k,j}_h}(\xi) s^{(\delta,t)}_\ell(\xi). 
\end{align*}    
Using this and applying Lemma \ref{lem3.3}, we see that 
\begin{align*} 
&V(f)^2= \sum_\ell \sum_{n\in \Bbb Z}\int_{2^n}^{2^{n+1}}
 |S^{(\delta,t)}_\ell f|^2\, \frac{dt}{t}
\\ 
&\leq C\sum_\ell \sum_{n\in \Bbb Z}\int_{2^n}^{2^{n+1}} 
\sum_{{(k,j) \in \mathscr G_{\ell, 2^{-n}t}} }
\sum_{h\in \mathscr F_{\ell, 2^{-n}t}} 
\left|T_{2^{-n}P^{\ell,k,j}_h} 
S^{(\delta,t)}_\ell f\right|^2 \, \frac{dt}{t}
\\ 
&\leq C \sum_{n}\sum_{k}\sum_{j}\sum_\ell \sum_{h}
\int \chi_{_{E(n,\ell,h)}}(t)\left|T_{2^{-n}P^{\ell,k,j}_h} 
S^{(\delta,t)}_\ell f\right|^2 \, \frac{dt}{t}
\\ 
&\leq C\sum_{n, k, j, \ell, h}
\int \chi_{_{E(n,\ell,h)}}(t) \sup_{t\in [2^n,2^{n+1}]} 
\left|S^{(\delta,t)}_\ell T_{2^{-n}P^{\ell,k,j}_h} f\right|^2 
\, \frac{dt}{t}
\\ 
&\leq C\delta \sum_{n, k, j, \ell, h}
 \sup_{t\in [2^n,2^{n+1}]} 
\left|S^{(\delta,t)}_\ell T_{2^{-n}P^{\ell,k,j}_h} f\right|^2.  
\end{align*}  
Thus  we have 
\begin{equation}\label{e3.8} 
V(f)(x) 
\leq C\delta^{1/2}
\left(\sum_{n, k, j, \ell, h} \sup_{t\in [2^n,2^{n+1}]} 
\left|S^{(\delta,t)}_\ell T_{2^{-n}P^{\ell,k,j}_h} f\right|^2\right)^{1/2}. 
\end{equation}
\par 
Observe that $S^{(\delta,t)}_\ell(f)=K_{\ell,t}*f$ with 
\begin{equation}\label{e3.9} 
K_{\ell,t}(x)=\int_{\Bbb R^2} \zeta_\ell(t\xi_1)\phi(t\xi)
e^{2\pi i\langle x,\xi\rangle} \, d\xi.  
\end{equation} 
We can see that  
\begin{equation}\label{e3.10}
 \left|(\partial/\partial t_\ell)^r(\partial/\partial n_\ell)^s 
\zeta_\ell(t\xi_1)\phi(t\xi) \right|\leq C_{r,s}
(t^{-1}\delta^{1/2})^{-r}(t^{-1}\delta)^{-s}, 
\end{equation} 
where $\partial/\partial t_\ell$ and $\partial/\partial n_\ell$ denote 
differentiations in the directions $t_\ell$ and $n_\ell$, where 
$t_\ell=(t_\ell^{(1)}, t_\ell^{(2)})=
(1,\psi'(a_{\ell-1}))'$ ($\xi'=|\xi|^{-1}\xi$), 
$n_\ell=(-t_\ell^{(2)},t_\ell^{(1)})$,   
and we recall that 
$\phi(\xi)=\Phi(\delta^{-1}(\xi_2-\psi(\xi_1)))b(\xi)$. 
The estimates \eqref{e3.10} follow by the 
observations as in \cite[p. 310]{Sa}.  
Let $O_\ell\in O(2)$ be such that $O_\ell e_1=t_\ell$ and 
$O_\ell e_2=n_\ell$ with $e_1=(1,0)$, $e_2=(0,1)$. 
Then applying integration by parts in \eqref{e3.9} and using \eqref{e3.10}, 
we see that 
\begin{equation}\label{e3.11} 
|K_{\ell,t}(O_\ell x)|
\leq C_{\alpha,\beta}t^{-2}\delta^{3/2}|t^{-1}\delta^{1/2}x_1|^{-\alpha} 
|t^{-1}\delta x_2|^{-\beta}
\end{equation} 
for $\alpha, \beta \in \Bbb Z\cap [0,3]$.  Taking 
$(\alpha,\beta)=(3,0), (0,3)$ in \eqref{e3.11}, we have 
\begin{equation*} 
\sup_{t\in [2^n,2^{n+1}]}|S^{(\delta,t)}_{\ell}f|\leq C\sum_{\nu=0}^\infty 
2^{-\nu} |E_{\ell,n,\nu}|^{-1}\chi_{E_{\ell,n,\nu}}*|f|, 
\end{equation*} 
where 
\begin{equation*} 
 E_{\ell,n,\nu}=\{O_\ell x: |x_1|\leq 2^\nu 2^{n}\delta^{-1/2}, 
|x_2|\leq 2^\nu 2^{n}\delta^{-1}\}. 
\end{equation*} 
Therefore by \eqref{e3.8} we see that 
\begin{multline}\label{e3.12} 
\|V(f)\|_4
\\ 
\leq C\delta^{1/2}\sum_{\nu=0}^\infty 
2^{-\nu}\left\|\left(\sum_{n,k,j,\ell,h}\left||E_{\ell,n,\nu}|^{-1}
\chi_{E_{\ell,n,\nu}}*\left|T_{2^{-n}P^{\ell,k,j}_h} f\right|
\right|^2\right)^{1/2} \right\|_4.  
\end{multline} 
\par  
Applying Lemmas \ref{lem2.1} and \ref{lem2.2}, we have the following result 
which will be used in estimating 
the $L^4$ norms on the right hand side of \eqref{e3.12}.  
\begin{proposition}\label{prop3.7} 
There exists a constant $C>0$ such that 
\begin{equation*} 
\left\|\left(\sum_{n,k,j,\ell,h}\left|T_{2^{-n}P^{\ell,k,j}_h} f\right|^2\right)^{1/2}\right\|_4 \leq 
C\left(\log\frac{1}{\delta}\right)^{(\beta+3\alpha)/2} \|f\|_4, \quad 
\beta=3\Theta+2\beta_0,     
\end{equation*} 
where $\Theta$, $\beta_0$ and $\alpha$ are as in \eqref{e3.3}. 
\end{proposition}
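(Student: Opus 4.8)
Write $E_\iota=2^{-n}P^{\ell,k,j}_h$ for $\iota=(n,k,j,\ell,h)$. Since $N=c_0\delta^{-1/2}$ we have $\log N\asymp\log(1/\delta)$, so it suffices to bound $\|(\sum_\iota|T_{E_\iota}f|^2)^{1/2}\|_4$ by $C(\log N)^{(\beta+3\alpha)/2}\|f\|_4$. The plan is the duality-and-peeling scheme of \cite{Ca} (compare \cite[pp.\,319--320]{Sa}). First,
\begin{equation*}
\Big\|\Big(\sum_\iota|T_{E_\iota}f|^2\Big)^{1/2}\Big\|_4^2=\Big\|\sum_\iota|T_{E_\iota}f|^2\Big\|_2=\sup\int_{\Bbb R^2}\sum_\iota|T_{E_\iota}f|^2\,w\,dx,
\end{equation*}
the supremum over $w\ge0$ with $\|w\|_2=1$. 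Using $P^{\ell,k,j}_h=S^\ell_h\cap(\omega_k\times\Bbb R)\cap(\Bbb R\times H_j)\cap\widetilde\Delta_\ell$ together with $2^{-n}\widetilde\Delta_\ell=\widetilde\Delta_\ell$ and $2^{-n}P^{\ell,k,j}_h\subset2^{-n}(\Bbb R\times[c,d])$, one factors the multiplier as a commuting product
\begin{equation*}
T_{E_\iota}=T_{2^{-n}S^\ell_h}\,T_{2^{-n}(\omega_k\times\Bbb R)}\,T_{2^{-n}(\Bbb R\times H_j)}\,T_{B_{n,\ell}},\qquad B_{n,\ell}:=\widetilde\Delta_\ell\cap2^{-n}(\Bbb R\times[c,d]).
\end{equation*}

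Next I would strip off the indices $h$, $k$, $j$ in turn. For fixed $(n,k,j,\ell)$ the sets $\{2^{-n}S^\ell_h\}_h$ are non-overlapping parallel strips of common width $\sim2^{-n}\delta$, with longer sides in the direction $t_\ell$; conjugation by $O_\ell$ makes them axis-parallel, so Lemma \ref{lem2.2} removes the $h$-sum at the price of $C(1/(s-1))^\Theta$, replacing $w$ by $(M_1^{O_\ell}(w^s))^{1/s}$, where $M_1^{O_\ell}$ is the strong maximal operator along the axes $t_\ell,n_\ell$. Then Lemma \ref{lem2.2} applied to the vertical strips $\{2^{-n}(\omega_k\times\Bbb R)\}_k$ removes the $k$-sum, and its $Q_n$-version applied to the horizontal strips $\{2^{-n}(\Bbb R\times H_j)\}_j$ removes the $j$-sum; each costs a further $C(1/(s-1))^\Theta$ and composes one more strong maximal average onto the weight. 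Choosing $\Omega_N$ to contain every direction $t_\ell$ gives $M_1^{O_\ell}\le M_{\Omega_N}$, so the modified weights are all dominated by $W:=(M_1M_1M_{\Omega_N}(w^s))^{1/s}$, which depends on neither $\ell$ nor $n$; summing over $\ell$ and applying the Cauchy--Schwarz inequality in $x$ one is left with $\|W\|_2\,\big\|\big(\sum_{n,\ell}|T_{B_{n,\ell}}f|^2\big)^{1/2}\big\|_4^2$, where $\|W\|_2\le C(\log N)^{\alpha}$ by Lemma \ref{lem2.1} and the $L^{2/s}$-boundedness (uniform as $s\to1^+$) of the maximal operators that occur.

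For the remaining square function the essential point is that the plates $B_{n,\ell}$ have bounded overlap: the cones $\widetilde\Delta_\ell$ occurring in the family $\mathscr F_1$ are pairwise disjoint, so each $\xi$ lies in at most one of them, while $\xi_2\in2^{-n}[c,d]$ for only $O(1)$ values of $n$; moreover each $B_{n,\ell}$ lies in a rectangle of eccentricity $\sim\delta^{-1/2}$ pointing in one of $\le N$ directions. A Córdoba-type estimate for such a family — obtained by a further round of the duality scheme, using the pointwise bound $|T_{B_{n,\ell}}g|\lesssim M_{B_{n,\ell}^*}|g|$ (up to rapidly decreasing tails), the single-interval bound \eqref{e2.2} for the sharp cutoffs in $\xi_2$ that define the slabs, and Lemma \ref{lem2.1} — gives $\|(\sum_{n,\ell}|T_{B_{n,\ell}}f|^2)^{1/2}\|_4\le C(\log N)^{\beta_0+\alpha}\|f\|_4$. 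Collecting all the losses,
\begin{equation*}
\Big\|\Big(\sum_\iota|T_{E_\iota}f|^2\Big)^{1/2}\Big\|_4^2\le C\Big(\tfrac{1}{s-1}\Big)^{3\Theta}(\log N)^{\alpha}\big((\log N)^{\beta_0+\alpha}\big)^2\|f\|_4^2=C\Big(\tfrac{1}{s-1}\Big)^{3\Theta}(\log N)^{2\beta_0+3\alpha}\|f\|_4^2,
\end{equation*}
and the choice $s=1+(\log N)^{-1}$ turns $(1/(s-1))^{3\Theta}$ into $(\log N)^{3\Theta}$, producing $(\log N)^{3\Theta+2\beta_0+3\alpha}=(\log N)^{\beta+3\alpha}$; a square root finishes the argument.

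The step I expect to be the main obstacle is the dilation sum over $n$: the strips used in the three Littlewood--Paley peelings change width with $n$, so $n$ cannot itself be removed by Lemma \ref{lem2.2}; one has to keep the localization $\xi_2\in[c,d]$ alive throughout and then exploit it, together with the disjointness of the cones $\widetilde\Delta_\ell$ built into the choice of $\mathscr F_1$, to recast the combined $(n,\ell)$-sum as a bounded-overlap family of plates to which the Córdoba-type bound applies. The other delicate point is the bookkeeping of logarithmic losses — verifying that every maximal operator introduced stays bounded on $L^{2/s}$ uniformly as $s\to1^+$, and that the only $N$-dependent losses are the factors of $(\log N)^{\alpha}$ that trace back to Lemma \ref{lem2.1} — so that one lands on exactly the exponents $\beta=3\Theta+2\beta_0$ and $3\alpha$ asserted.
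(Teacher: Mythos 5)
Your overall scheme (dualize the $L^4$ norm, peel off index sums with the weighted Littlewood--Paley inequality of Lemma \ref{lem2.2}, dominate the accumulated weights by powers of $M_{\Omega_N}$ and $M_1$, choose $s=1+(\log N)^{-1}$, and invoke Lemma \ref{lem2.1}) is the right one, and your exponent bookkeeping happens to land on $\beta+3\alpha$. But there is a genuine gap in how you dispose of the $\ell$-sum. The paper removes $\ell$ \emph{first}, while the localization $T_{2^{-n}(\omega_k\times H_j)}$ is still present: each $T_{2^{-n}\Delta_\ell}$ is a product of two half-plane multipliers with boundary directions in $\Omega_{N_*}$, so it admits a weighted $L^2$ bound costing $(1/(s-1))^{2\beta_0}$ (this is where the $2\beta_0$ in $\beta=3\Theta+2\beta_0$ comes from), and then Lemma \ref{lem3.8} — which you never use — guarantees that for fixed $(k,j)$ only $O(1)$ cones $\widetilde{\Delta}_\ell$ meet $\omega_k\times H_j$, so the $\ell$-sum collapses with bounded multiplicity. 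You instead strip off $h$, $k$, $j$ first and defer the combined $(n,\ell)$-sum to a ``C\'ordoba-type'' estimate for $\sum_{n,\ell}|T_{B_{n,\ell}}f|^2$. At that stage the only remaining $\ell$-dependence is the sharp cone multiplier itself, and a naive peeling (apply the individual weighted bound for $T_{\widetilde{\Delta}_\ell}$ and sum over $\ell$) multiplies by the number of cones, i.e.\ by $N\sim\delta^{-1/2}$, not by a power of $\log N$. The bounded overlap of the $B_{n,\ell}$ in frequency that you cite gives an $L^2$ bound, not an $L^4$ one.

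The sector square function estimate you assert, $\|(\sum_{n,\ell}|T_{B_{n,\ell}}f|^2)^{1/2}\|_4\le C(\log N)^{\beta_0+\alpha}\|f\|_4$, is a theorem of essentially the same depth as Proposition \ref{prop3.7} itself; the standard proof of it is exactly the argument you are trying to avoid (reintroduce a $\delta^{1/2}\times\delta^{1/2}$ grid, use the finite-overlap lemma, and run the duality scheme with the Kakeya maximal function), so as written your argument is partly circular. Moreover the ingredient you propose for it — the pointwise bound $|T_{B_{n,\ell}}g|\lesssim M_{B_{n,\ell}^*}|g|$ up to tails — is false for sharp characteristic-function multipliers such as $\chi_{B_{n,\ell}}$ (their kernels are not integrable; this is precisely why one must pass through the directional Hilbert transform and the weighted estimate \eqref{e2.1} rather than through pointwise maximal-function domination). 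To repair the proof, keep the factor $T_{2^{-n}(\omega_k\times H_j)}$ alive until after the cone multipliers have been removed, invoke Lemma \ref{lem3.8} to collapse the $\ell$-sum, and only then peel $j$ and $k$; the residual $\sum_n|T_{2^{-n}(\Bbb R\times[c,d])}f|^2$ is handled by the classical Littlewood--Paley inequality and the Schwarz inequality, exactly as in your final step.
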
 
To prove this we also need the following. 
\begin{lemma} \label{lem3.8}
Let 
$\omega_k\times H_j\subset 
[-b,b]\times [c,d]$. Then there exists a constant $C$ 
independent of $k, j$ such that 
\begin{equation*} 
\card\{\ell: \widetilde{\Delta}_\ell\cap (\omega_k\times H_j)\neq \emptyset\}
\leq C.  
\end{equation*} 
\end{lemma}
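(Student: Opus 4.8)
The plan is to pass to the ``direction variable'' $\xi_1/\xi_2$, in which both $\widetilde{\Delta}_\ell$ and $\omega_k\times H_j$ become thin slabs of width comparable to $\delta^{1/2}$, and then to count by a pigeonhole argument. Throughout, only those $\ell$ with $[a_{\ell-2},a_{\ell+1}]\subset I$ are relevant, so all values of $\psi$ and $\Psi(t)=t/\psi(t)$ in sight are taken on $I$, where in case $(B.1)$ we have $\psi>0$ and (by assumption) $\Psi'>0$.

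First I would record that, directly from the definition of $\Delta_m$, a point $\xi=(\xi_1,\xi_2)$ lies in $\Delta_m$ precisely when $\xi_2>0$ and $\xi_1/\xi_2\in J_m:=[\Psi(a_{m-1}),\Psi(a_m)]$. As $\omega_m=[a_{m-1},a_m]$ runs over consecutive subintervals of $I$, the intervals $J_m$ are consecutive and non-overlapping; and as in the proof of Lemma~\ref{lem3.4}, condition $(A.2)$ together with compactness of $I$ yields constants $0<c_1\le c_2$ with $c_1\le\Psi'\le c_2$ on $I$ (recall $\Psi'=(\psi-t\psi')/\psi^2$), so that $|J_m|=\Psi(a_m)-\Psi(a_{m-1})\ge c_1(a_m-a_{m-1})=c_1\delta^{1/2}$.

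Next I would check that the direction variable of $\omega_k\times H_j$ lies in a single interval $V_{k,j}$ with $\diam(V_{k,j})\le C_0\delta^{1/2}$. Since $\omega_k\times H_j\subset[-b,b]\times[c,d]$, on this rectangle $|\xi_2|\ge c$ and $|\xi_1|\le b$, whence for $\xi,\xi'\in\omega_k\times H_j$ one has $|\xi_1/\xi_2-\xi_1'/\xi_2'|\le c^{-1}|\xi_1-\xi_1'|+bc^{-2}|\xi_2-\xi_2'|\le(c^{-1}+bc^{-2})\delta^{1/2}$, using $|\omega_k|=|H_j|=\delta^{1/2}$; one may take $C_0=c^{-1}+bc^{-2}$.

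Finally I would combine the two facts. Writing $\widetilde{\Delta}_\ell=\Delta_{\ell-1}\cup\Delta_\ell\cup\Delta_{\ell+1}$: if $\widetilde{\Delta}_\ell$ meets $\omega_k\times H_j$ then $\Delta_m$ meets $\omega_k\times H_j$ for some $m\in\{\ell-1,\ell,\ell+1\}$, and conversely each such $m$ is obtained this way from at most three values of $\ell$, so it is enough to bound $\card\{m:\Delta_m\cap(\omega_k\times H_j)\ne\emptyset\}$. If $\Delta_m$ meets $\omega_k\times H_j$ at a point $\xi$, then $\xi_1/\xi_2\in J_m\cap V_{k,j}$, so $J_m$ meets $V_{k,j}$; since the $J_m$ are non-overlapping with $|J_m|\ge c_1\delta^{1/2}$ while $\diam(V_{k,j})\le C_0\delta^{1/2}$, at most $C_0/c_1+2$ of them can meet $V_{k,j}$. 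Hence $\card\{\ell:\widetilde{\Delta}_\ell\cap(\omega_k\times H_j)\ne\emptyset\}\le 3(C_0/c_1+2)=:C$, independent of $k,j$ and of $\delta$, which is the claim. There is no real obstacle here; the only thing needing a little care is the bookkeeping through the threefold union defining $\widetilde{\Delta}_\ell$ and the explicit choice of $C_0$, the geometric substance being simply that in the variable $\xi_1/\xi_2$ both families are $\delta^{1/2}$-slabs while the $\Delta_m$ are disjoint.
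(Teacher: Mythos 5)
Your argument is correct and is essentially the paper's own proof in slightly different clothing: both reduce matters to the direction variable $\xi_1/\xi_2$, use $(A.2)$ (via $|\Psi'|\ge c$ on $I$) to see that each $\Delta_m$ occupies an interval of length $\gtrsim\delta^{1/2}$ in that variable while the rectangle $\omega_k\times H_j$ (on which $\xi_2\ge c>0$) occupies one of length $\lesssim\delta^{1/2}$, and then count. The paper reaches the same comparison by a short case analysis on the sign of $a_{k-1},a_k$ to bound $\Psi(\alpha_\ell)$ between $a_{k-1}/b_j$ and $a_k/b_{j-1}$, whereas you bound the diameter of $\{\xi_1/\xi_2\}$ over the rectangle directly; the difference is only cosmetic.
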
 
\begin{proof} 
Let $\Delta_\ell\cap (\omega_k\times H_j)\neq \emptyset$. 
Then 
$L_{\alpha_\ell}\cap (\omega_k\times H_j)\neq \emptyset$ 
for some $\alpha_\ell\in [a_{\ell-1}, a_\ell]$, where 
$L_{\alpha_\ell}=\{\xi: \xi_1=\Psi(\alpha_\ell) \xi_2\}$, 
$\Psi(t)=t/\psi(t)$.  
\par 
Suppose that $a_{k-1}\geq 0$. If $\omega_k=[a_{k-1},a_k]$,  
$H_j=[b_{j-1}, b_j]$, then 
\begin{equation*} 
a_{k-1}/b_{j}\leq \Psi(\alpha_\ell)\leq a_k/b_{j-1}.   
\end{equation*} 
If  also $\Delta_m\cap (\omega_k\times H_j)\neq \emptyset$,  
then we take $\alpha_m\in [a_{m-1}, a_m]$ such that 
\begin{equation*} 
a_{k-1}/b_{j}\leq \Psi(\alpha_m)\leq a_k/b_{j-1}.   
\end{equation*} 
Thus 
\begin{equation} \label{e3.13}
\left|\Psi(\alpha_\ell)-\Psi(\alpha_m) \right|
\leq a_k/b_{j-1}-a_{k-1}/b_{j}=
\frac{\delta^{1/2}(b_j+a_{k-1})}{b_jb_{j-1}}. 
\end{equation} 
Since $\psi'(t)\neq \psi(t)/t$ on $I$, we see that $|\Psi'(t)|\geq c$ 
on $I$ with a constant $c>0$. Thus by \eqref{e3.13} 
we have 
\begin{equation*} 
c|\alpha_\ell-\alpha_m|\leq C\delta^{1/2},  
\end{equation*} 
which implies that $|\ell-m|\leq C$.  From this we can derive what is claimed.  \par 
Suppose that $a_k\leq 0$.  Then 
\begin{equation*} 
a_{k-1}/b_{j-1}\leq \Psi(\alpha_\ell)\leq a_k/b_{j}.   
\end{equation*} 
Since 
\begin{equation*} \label{}
a_k/b_{j}-a_{k-1}/b_{j-1}=
\frac{\delta^{1/2}(b_{j-1}-a_{k-1})}{b_jb_{j-1}},  
\end{equation*} 
 arguing as above, we can handle this case. 
\par 
Suppose that $a_{k-1}<0<a_k$.  Then 
\begin{equation*} 
a_{k-1}/b_{j-1}\leq \Psi(\alpha_\ell)\leq a_k/b_{j-1}.   
\end{equation*} 
 Using this we can also get the desired result.   
\end{proof}  

\begin{proof}[Proof of Proposition $\ref{prop3.7}$] 
Let $\Omega_{N_*}=\{(1,\psi'(a_k))', (a_m,\psi(a_m))': 0\leq k, m\leq N\}$, 
$N_*=2(N+1)$. 
Let $w$ be a bounded, non-negative function with compact support. Then 
by Lemma \ref{lem2.2} and Lemma \ref{lem3.8} we see that 
\begin{align*} \label{}
&\int \sum_{n,k,j,\ell,h}\left|T_{2^{-n}P^{\ell,k,j}_h} f\right|^2
w\, dx 
\\ 
&\leq C\left(\frac{1}{s-1}\right)^{\Theta} 
\sum_{n,k,j,\ell}\int \left|T_{2^{-n}\Delta_\ell}
T_{2^{-n}(\omega_k\times H_j)}
T_{2^{-n}(\Bbb R\times [c,d])} f\right|^2
(M_{\Omega_{N_*}}(w^s))^{1/s}\, dx 
\\ 
&\leq C\left(\frac{1}{s-1}\right)^{\Theta+2\beta_0} 
\sum_{n,k,j}\int \left|T_{2^{-n}(\omega_k\times H_j)}
T_{2^{-n}(\Bbb R\times [c,d])} f\right|^2
(M_{\Omega_{N_*}}^3(w^s))^{1/s}\, dx 
\\ 
&\leq C\left(\frac{1}{s-1}\right)^{2\Theta+2\beta_0} 
\sum_{n,k}\int \left|T_{2^{-n}(\omega_k\times \Bbb R)}
T_{2^{-n}(\Bbb R\times [c,d])} f\right|^2
(M_1M_{\Omega_{N_*}}^3(w^s))^{1/s}\, dx 
\\ 
&\leq C\left(\frac{1}{s-1}\right)^{3\Theta+2\beta_0}
\sum_{n} \int \left|T_{2^{-n}(\Bbb R\times [c,d])} f\right|^2\, 
(M_1^2M_{\Omega_{N_*}}^3(w^s))^{1/s}\, dx, 
\end{align*} 
where the second inequality follows by Lemma \ref{lem3.8} and estimates shown 
similarly to \eqref{e2.2} by applying \eqref{e2.1}, and the other 
inequalities are derived from Lemma \ref{lem2.2}. 
\par 
Thus, applying the Schwarz inequality, we have 
\begin{align} \label{e3.14}
&\int \sum_{n,k,j,\ell,h}\left|T_{2^{-n}P^{\ell,k,j}_h} f\right|^2
w\, dx
\\ 
&\leq  C\left(\frac{1}{s-1}\right)^{3\Theta+2\beta_0} \left\|
\left(\sum_{n}\left|T_{2^{-n}(\Bbb R\times [c,d])} f\right|^2\right)^{1/2}
\right\|_4^2 \left\|(M_1^2M_{\Omega_{N_*}}^3(w^s))^{1/s}\right\|_2  \notag 
\\ 
&\leq  C\left(\frac{1}{s-1}\right)^{3\Theta+2\beta_0} \left\|f\right\|_4^2 
\left\|(M_1^2M_{\Omega_{N_*}}^3(w^s))^{1/s}\right\|_2,   \notag   
\end{align} 
where the last inequality follows by the Littlewood-Paley estimates. 
\par 
Let $\beta=3\Theta+2\beta_0$.  
We estimate $A_0:=\left(1/(s-1)\right)^{\beta}\left
\|(M_1^2M_{\Omega_{N_*}}^3(w^s))^{1/s}\right\|_2$ as follows. 
Let $s=1+(\log N)^{-1}$. Interpolating between the estimates 
$\|M_{\Omega_{N_*}}(f)\|_{4/3}\leq CN\|f\|_{4/3}$ and $\|M_{\Omega_{N_*}}(f)\|_2\leq C(\log N)^\alpha\|f\|_2$ (Lemma \ref{lem2.1}), 
we have $\|M_{\Omega_{N_*}}(f)\|_{2/s}\leq C_N\|f\|_{2/s}$, where 
$C_N\leq CN^{1-\theta}(\log N)^{\theta\alpha}$ with $1-\theta=2(\log N)^{-1}$, 
which implies that 
\begin{equation*} 
C_N\leq CN^{2(\log N)^{-1}}(\log N)^{\alpha(1-2(\log N)^{-1})}
\leq C(\log N)^\alpha. 
\end{equation*} 
Also, $(1/(s-1))^{\beta} =(\log N)^\beta$.  
Thus 
\begin{align*} \label{} 
A_0&\leq C(\log N)^\beta\left\|M_{\Omega_{N_*}}^3(w^s)\right\|_{2/s}^{1/s} 
\\ 
&\leq C(\log N)^\beta(\log N)^{3\alpha/s}\|w\|_2 
\\ 
&\leq C(\log N)^{\beta+3\alpha}\|w\|_2.   
\end{align*} 
Taking the supremum in \eqref{e3.14} over $w$ with $\|w\|_2\leq 1$, we have the conclusion of 
Proposition $\ref{prop3.7}$. 
\end{proof} 
Take a non-negative $w\in C_0^\infty(\Bbb R^2)$. 
By Lemma \ref{lem2.1} and Proposition \ref{prop3.7} we see that 
\begin{align*} 
&\int \sum_{n,k,j,\ell,h}\left||E_{\ell,n,\nu}|^{-1}
\chi_{E_{\ell,n,\nu}}*\left|T_{2^{-n}P^{\ell,k,j}_h} f\right|(x)
\right|^2 w(x)\, dx 
\\  
&\leq   \int \sum_{n,k,j,\ell,h}|E_{\ell,n,\nu}|^{-1}
\chi_{E_{\ell,n,\nu}}*\left|T_{2^{-n}P^{\ell,k,j}_h} f\right|^2(x)
 w(x)\,dx 
\\
&= \int \sum_{n,k,j,\ell,h}
\left|T_{2^{-n}P^{\ell,k,j}_h} f(y)\right|^2|E_{\ell,n,\nu}|^{-1}
\chi_{E_{\ell,n,\nu}}* w(y)\,dy 
\\
&\leq \left\|\left(\sum_{n,k,j,\ell,h}
\left|T_{2^{-n}P^{\ell,k,j}_h} f\right|^2
\right)^{1/2} \right\|_4^2\|M_{\Omega_{N_*}}w\|_2 
\\ 
&\leq C\left(\log\frac{1}{\delta}\right)^{\beta+4\alpha}\|f\|_4^2\|w\|_2.   
\end{align*} 
Taking the supremum over $w$ with $\|w\|_2\leq 1$, we have 
\begin{equation*} 
\left\|\left(\sum_{n,k,j,\ell,h}\left||E_{\ell,n,\nu}|^{-1}
\chi_{E_{\ell,n,\nu}}*\left|T_{2^{-n}P^{\ell,k,j}_h} f
\right|\right|^2\right)^{1/2} 
\right\|_4 \leq C\left(\log\frac{1}{\delta}\right)^{(\beta/2)+2\alpha}\|f\|_4. 
\end{equation*} 
Using this in \eqref{e3.12}, we see that 
\begin{equation*} 
\|V(f)\|_4 \leq 
C\delta^{1/2}\left(\log\frac{1}{\delta}\right)^{(\beta/2)+2\alpha}\|f\|_4. 
\end{equation*}  
By \eqref{e3.6} this proves \eqref{e3.3} for $k=0$ and $i=1$. 
As we have already seen, 
this will lead to the estimate \eqref{e3.4}.  
This completes the proof of Theorem \ref{thm1.2} under $(B.1)$ with $\Psi'>0$. 

\begin{proof}[Proof of Theorem $\ref{thm1.3}$ in the
 case $(B.1)$ with $\Psi'>0$]   
Let $\phi_0 \in C_0^\infty(\Bbb R)$ be supported in $[1/2, 2]$ and 
$\sum_{n=0}^\infty \phi_0(2^n t)=1$ for $0<t<1$. 
Decompose 
\begin{align*} 
\Psi_{\lambda,\theta}(\xi)
&=b(\xi)\varphi_{\lambda,\theta}(\xi_2-\psi(\xi_1)) =r(\xi)+
\sum_{n=1}^\infty b(\xi)\varphi_{\lambda,\theta}(\xi_2-\psi(\xi_1)) 
\phi_0(2^n(\xi_2-\psi(\xi_1)))
\\ 
&= r(\xi)+ \sum_{n=1}^\infty  2^{-n\lambda}n^{-\theta} 
b(\xi)\left(2^{n\lambda}n^{\theta}\varphi_{\lambda,\theta}(\xi_2-\psi(\xi_1))
\right) \phi_0(2^n(\xi_2-\psi(\xi_1))) 
\\ 
&= r(\xi)+ \sum_{n=1}^\infty  2^{-n\lambda}n^{-\theta} 
b(\xi)\varphi_{n,\lambda,\theta}(2^n(\xi_2-\psi(\xi_1))), 
\end{align*} 
where $r\in C_0^\infty(\Bbb R^2)$, $0\notin \supp(r)$, 
and 
$\varphi_{n,\lambda,\theta}(u)=u^\lambda n^\theta 
(\log(2+2^n/u))^{-\theta}\phi_0(u)$. 
Put 
$\Psi_{n, \lambda, \theta}(\xi)
= b(\xi)\varphi_{n,\lambda,\theta}(2^n(\xi_2-\psi(\xi_1)))$. 
Applying Theorem \ref{thm1.2} and a triangle inequality, if $\lambda$ and 
$\theta$ satisfy the conditions of Theorem \ref{thm1.3}, 
we see that 
\begin{align*} 
\|g_{\mathscr{F}^{-1}(\Psi_{\lambda,\theta})}(f)\|_4 &\leq 
\|g_{\mathscr{F}^{-1}(r)}(f)\|_4 +
\sum_{n=1}^\infty 2^{-n\lambda}n^{-\theta}
\|g_{\mathscr{F}^{-1}(\Psi_{n,\lambda,\theta})}(f)\|_4 
\\ 
&\leq C\|f\|_4 +C\sum_{n=1}^\infty 2^{-n\lambda}n^{-\theta} n^\tau 2^{-n/2}
\|f\|_4
\\ 
&\leq C\|f\|_4.  
\end{align*} 
This completes the proof of Theorem \ref{thm1.3} for the case $(B.1)$ with 
$\Psi'>0$.  
\end{proof}

\section{ Proof of Theorem \ref{thm1.1} for case $(B.1)$ with  $\Psi'>0$}
\label{sec4}  
In this section we prove Theorem \ref{thm1.1} 
under the conditions that 
$\Gamma \subset (-b,b)\times (c,d)$, $0<b$, $0<c<d$, 
$I=[A,B]\subset (-b,b)$ and $\Psi'>0$.  
\par 
We first prepare some results involving homogeneous functions of 
degree one in a cone, which will be used in what follows.  
Let $H$ be an interval in $\Bbb R$ with $|H|<\pi$. Define 
a cone $C_H$ by 
\begin{equation*} 
C_H=\{r(\cos\theta, \sin\theta): \theta\in H, \, r>0\}. 
\end{equation*}    
For an appropriate function $f$, let 
\begin{align*} 
B^\lambda_R f(x)&=\int_{\rho(\xi)<R}\hat{f}(\xi)
b(R^{-1}\xi)(1-R^{-1}\rho(\xi))_+^\lambda 
e^{2\pi i\langle x, \xi\rangle}\, d\xi
\\ 
&= R^{-\lambda}
\int_{\rho(\xi)<R}\hat{f}(\xi)b(R^{-1}\xi)(R-\rho(\xi))_+^\lambda 
e^{2\pi i\langle x, \xi\rangle}\, d\xi,     
\end{align*} 
where  $\rho$ is a non-negative function 
 on $\Bbb R^2$ and $b \in C_0^\infty(\Bbb R^2)$ such that 
$0 \not\in \supp(b) \subset C_H$ with an open interval $H$, 
 $\supp(b)\subset \{0< r_1<|\xi|< r_2\}$.  
 We assume that $\rho(t\xi)=t\rho(\xi)$ if $\xi \in  C_H$ and $ t>0$.  
Then we have the following  
(see \cite[\S 5, Chap.\,VII]{SW} and also \cite[Lemma 4]{Sa-0} for relevant 
results). 

\begin{lemma}\label{lem4.1} 
If $\beta>1/2$ and $\delta>-1/2$, then 
\begin{equation*} 
\left|B^{\delta+\beta}_R f(x)\right|\leq C_{\delta,\beta}
\left(\int_0^1(1-s)^{2(\beta-1)}s^{2\delta}\, ds\right)^{1/2}
\left(R^{-1}\int_0^R\left| (\mathscr F^{-1}b)_{R^{-1}}*\widetilde{B}_s^\delta 
f(x)\right|^2\, ds\right)^{1/2}.   
\end{equation*} 
Here  $C_{\delta,\beta}
=\Gamma(\delta+\beta+1)/(\Gamma(\delta+1)\Gamma(\beta))$ and 
\begin{align*} 
\widetilde{B}^\delta_s f(x)=\int_{\rho(\xi)<s}\hat{f}(\xi)
\widetilde{b}(s^{-1}\xi)(1-s^{-1}\rho(\xi))_+^\delta  
e^{2\pi i\langle x, \xi\rangle}\, d\xi, 
\end{align*} 
where $\widetilde{b}$ is a function in $C^\infty(\Bbb R^2)$ such that $0\not\in \supp(\widetilde{b}) \subset C_{H}$ 
and such that there exists a compact subinterval $H'$ of $H$ for which 
we have $\supp(b) \subset C_{H'}$ and $\widetilde{b}(\xi)=1$ for all 
$\xi \in C_{H'}$ with $|\xi|\geq r_1$.   
\end{lemma}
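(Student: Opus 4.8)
The plan is to reduce the estimate to a one-dimensional Euler-type subordination identity followed by a routine interchange of integrations and Cauchy--Schwarz. The identity I would use is
\begin{equation*}
(1-u)_+^{\delta+\beta}=\frac{\Gamma(\delta+\beta+1)}{\Gamma(\delta+1)\Gamma(\beta)}\int_u^1(1-s)^{\beta-1}(s-u)_+^\delta\,ds,\qquad \delta>-1,\ \beta>0,
\end{equation*}
which follows from the substitution $s=u+(1-u)v$: this turns the right-hand side into $(1-u)^{\delta+\beta}\int_0^1(1-v)^{\beta-1}v^\delta\,dv$, and the beta integral equals $\Gamma(\delta+1)\Gamma(\beta)/\Gamma(\delta+\beta+1)$. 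The normalising constant is then exactly the $C_{\delta,\beta}$ in the statement.

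First I would insert this with $u=R^{-1}\rho(\xi)$ into the integral defining $B^{\delta+\beta}_Rf(x)$ and interchange the $s$-integration with the $\xi$-integration; this is legitimate because $f\in\mathscr S$, $b\in C_0^\infty$, and $\delta>-1$ keeps $(s-R^{-1}\rho(\xi))_+^\delta$ integrable in $s$. One obtains $B^{\delta+\beta}_Rf(x)=C_{\delta,\beta}\int_0^1(1-s)^{\beta-1}G_s(x)\,ds$, where
\begin{equation*}
G_s(x)=\int_{\rho(\xi)<sR}\hat f(\xi)\,b(R^{-1}\xi)\bigl(s-R^{-1}\rho(\xi)\bigr)_+^\delta e^{2\pi i\langle x,\xi\rangle}\,d\xi .
\end{equation*}

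Next I would show that $G_s(x)=s^\delta\,(\mathscr F^{-1}b)_{R^{-1}}*\widetilde B^\delta_{sR}f(x)$ for $0<s\le 1$. The key observation is that $\widetilde b\bigl((sR)^{-1}\xi\bigr)=1$ on $\supp\bigl(b(R^{-1}\cdot)\bigr)$: if $R^{-1}\xi\in\supp(b)\subset C_{H'}$ then $|R^{-1}\xi|\ge r_1$, the cone $C_{H'}$ is invariant under positive dilations, and $s\le1$ forces $|(sR)^{-1}\xi|\ge|R^{-1}\xi|\ge r_1$, so the defining property of $\widetilde b$ applies. Using this, the homogeneity $\rho(t\xi)=t\rho(\xi)$ --- which gives $(s-R^{-1}\rho(\xi))_+^\delta=s^\delta\bigl(1-(sR)^{-1}\rho(\xi)\bigr)_+^\delta$ --- and factoring the product of the Fourier multipliers $b(R^{-1}\cdot)$ and $\widetilde b((sR)^{-1}\cdot)\bigl(1-(sR)^{-1}\rho(\cdot)\bigr)_+^\delta$ into the corresponding convolution, with $\mathscr F^{-1}\bigl(b(R^{-1}\cdot)\bigr)=(\mathscr F^{-1}b)_{R^{-1}}$ and the second factor being exactly the multiplier defining $\widetilde B^\delta_{sR}$, yields the claimed formula for $G_s$.

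Finally I would apply the Cauchy--Schwarz inequality in $s$ against the weight $(1-s)^{2(\beta-1)}s^{2\delta}$, whose integrability on $(0,1)$ is precisely what the hypotheses $\beta>1/2$ and $\delta>-1/2$ guarantee; this gives $|B^{\delta+\beta}_Rf(x)|\le C_{\delta,\beta}\bigl(\int_0^1(1-s)^{2(\beta-1)}s^{2\delta}\,ds\bigr)^{1/2}\bigl(\int_0^1|(\mathscr F^{-1}b)_{R^{-1}}*\widetilde B^\delta_{sR}f(x)|^2\,ds\bigr)^{1/2}$, and the substitution $u=sR$ turns $\int_0^1|\cdots|^2\,ds$ into $R^{-1}\int_0^R|\cdots|^2\,du$, which is the assertion. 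I expect the main difficulty to lie in the third step: checking the support inclusion carefully enough to insert $\widetilde b$ at no cost, and justifying the passage from a product of multipliers to a convolution when $\delta<0$, where $\widetilde B^\delta_{sR}f$ is defined through a mildly singular multiplier; the remaining steps are routine given the stated hypotheses.
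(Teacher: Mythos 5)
Your proposal is correct and follows essentially the same route as the paper: the paper starts from the unnormalized form of the same beta-integral identity, $(R-\rho(\xi))_+^{\delta+\beta}=C_{\delta,\beta}\int_0^{(R-\rho(\xi))_+}((R-\rho(\xi))_+-u)^{\beta-1}u^\delta\,du$, changes variables $u=s-\rho(\xi)$, inserts $\widetilde b(s^{-1}\xi)$ for free on the support of $b(R^{-1}\cdot)$ exactly as you argue, recognizes the resulting multiplier as $(\mathscr F^{-1}b)_{R^{-1}}*\widetilde B^\delta_s f$, and concludes by the Schwarz inequality with the weight $(1-s)^{2(\beta-1)}s^{2\delta}$. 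The only cosmetic difference is that you normalize to $s\in(0,1]$ at the start and rescale back at the end, whereas the paper works on $[0,R]$ throughout; the substance is identical.
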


\begin{proof} 
Using the formula 
\begin{equation*} 
(R-\rho(\xi))_+^{\delta+\beta}= C_{\delta,\beta}\int_0^{(R-\rho(\xi))_+}
((R-\rho(\xi))_+-u)^{\beta-1}u^\delta \, du, 
\end{equation*} 
we have 
\begin{multline*} 
B^{\delta+\beta}_R f(x) 
\\ 
= C_{\delta,\beta}R^{-\delta-\beta}
\int_{\rho(\xi)<R}
\int_0^{(R-\rho(\xi))_+}
((R-\rho(\xi))_+-u)^{\beta-1}u^\delta \, du\, 
b(R^{-1}\xi)\hat{f}(\xi) 
e^{2\pi i\langle x, \xi\rangle}\, d\xi.      
\end{multline*} 
Changing variables $u=s-\rho(\xi)$, we see that this is equal to 
\begin{align*} 
&C_{\delta,\beta}R^{-\delta-\beta}
\int_{\rho(\xi)<R}
\int_{\rho(\xi)}^{R}
(R-s)^{\beta-1}(s-\rho(\xi))^\delta \, ds\, 
b(R^{-1}\xi)\hat{f}(\xi) e^{2\pi i\langle x, \xi\rangle}\, d\xi 
\\ 
&= C_{\delta,\beta}R^{-\delta-\beta}
\int_{0}^{R}
\int_{\rho(\xi)<s}
(s-\rho(\xi))^\delta 
b(R^{-1}\xi)\hat{f}(\xi) e^{2\pi i\langle x, \xi\rangle}\, d\xi\,  
(R-s)^{\beta-1}\, ds 
\\ 
&=C_{\delta,\beta}R^{-\delta-\beta} 
\\ 
&\quad\times 
\int_{0}^{R}(R-s)^{\beta-1}s^\delta 
\int_{\rho(\xi)<s}
(1-\rho(s^{-1}\xi))^\delta 
\widetilde{b}(s^{-1}\xi)b(R^{-1}\xi)\hat{f}(\xi) 
e^{2\pi i\langle x, \xi\rangle}\, d\xi  \, ds 
\\ 
&=C_{\delta,\beta}R^{-\delta-\beta}
\int_{0}^{R}(R-s)^{\beta-1}s^\delta (\mathscr F^{-1} b)_{R^{-1}}*
\widetilde{B}^\delta_s f(x)  \, ds.   
\end{align*} 
Thus by applying the Schwarz inequality, we have 
\begin{align*} 
&\left|B^{\delta+\beta}_R f(x)\right|
\\ 
&\leq  
C_{\delta,\beta}R^{-\delta-\beta}
\left(\int_{0}^{R}(R-s)^{2(\beta-1)}s^{2\delta} \, ds\right)^{1/2}  
\left(\int_{0}^{R}\left|(\mathscr F^{-1} b)_{R^{-1}}*
\widetilde{B}^\delta_s f(x)\right|^2  \, ds\right)^{1/2}
\\ 
&=C_{\delta,\beta}R^{-1/2}
\left(\int_{0}^{1}(1-s)^{2(\beta-1)}s^{2\delta} \, ds\right)^{1/2}  
\left(\int_{0}^{R}\left|(\mathscr F^{-1} b)_{R^{-1}}*\widetilde{B}^\delta_s 
f(x)\right|^2  \, ds\right)^{1/2}. 
\end{align*} 
This completes the proof of Lemma \ref{lem4.1}. 
\end{proof}

\begin{corollary} \label{cor4.2} 
Let $\beta, \delta$, $\rho$, $b$, $\widetilde{b}$, $B_R^\lambda$ and 
$\widetilde{B}_R^\lambda$
 be as in Lemma $\ref{lem4.1}$.  Let $B_*^\lambda f(x)
=\sup_{R>0} |B_R^\lambda f(x)|$ and 
$\psi^\delta(\xi)=\widetilde{b}(\xi)(1-\rho(\xi))_+^\delta$.  Then 
\begin{equation*} 
B_*^{\delta+\beta}f(x)\leq C M(g_{\mathscr{F}^{-1}(\psi^\delta)}f)(x), 
\end{equation*} 
where $M$ denotes the Hardy-Littlewood maximal operator. 
\end{corollary}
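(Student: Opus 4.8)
The plan is to deduce Corollary \ref{cor4.2} directly from Lemma \ref{lem4.1} by recognizing each $\widetilde B^\delta_s f$ as a dilate of $\mathscr F^{-1}(\psi^\delta)*f$ and then pushing the $L^2$ average in $s$ through the convolution with $(\mathscr F^{-1}b)_{R^{-1}}$ by means of Minkowski's integral inequality. First I would record that the numerical factor $C_{\delta,\beta}\bigl(\int_0^1(1-s)^{2(\beta-1)}s^{2\delta}\,ds\bigr)^{1/2}$ appearing in Lemma \ref{lem4.1} is a finite constant $C_0$, since $\beta>1/2$ and $\delta>-1/2$ force both exponents to exceed $-1$. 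Next, setting $\Psi=\mathscr F^{-1}(\psi^\delta)$, I would verify the identity $\widetilde B^\delta_s f=\Psi_{1/s}*f$: taking Fourier transforms, $\mathscr F(\widetilde B^\delta_s f)(\xi)=\widetilde b(s^{-1}\xi)\bigl(1-s^{-1}\rho(\xi)\bigr)_+^\delta\hat f(\xi)$, and because $\supp(\widetilde b)\subset C_H$ and $\rho$ is positively homogeneous of degree one on $C_H$ we have $s^{-1}\rho(\xi)=\rho(s^{-1}\xi)$ on that set, so this equals $\psi^\delta(s^{-1}\xi)\hat f(\xi)=\hat\Psi(s^{-1}\xi)\hat f(\xi)=\mathscr F(\Psi_{1/s}*f)(\xi)$, using $\widehat{h_t}(\xi)=\hat h(t\xi)$.

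With $K=\mathscr F^{-1}(b)$, a Schwartz function since $b\in C_0^\infty(\Bbb R^2)$, the inner factor in Lemma \ref{lem4.1} becomes $\bigl(R^{-1}\int_0^R|K_{R^{-1}}*\Psi_{1/s}*f(x)|^2\,ds\bigr)^{1/2}$. I would then apply Minkowski's inequality for integrals in the variable $y$ against the (finite) measure $|K_{R^{-1}}(y)|\,dy$, regarding $s\mapsto(\Psi_{1/s}*f)(x-y)$ as an element of $L^2\bigl((0,R),R^{-1}\,ds\bigr)$, to bound the above by $\int|K_{R^{-1}}(y)|\bigl(R^{-1}\int_0^R|(\Psi_{1/s}*f)(x-y)|^2\,ds\bigr)^{1/2}\,dy$. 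For the inner average I would substitute $t=1/s$, obtaining $R^{-1}\int_{1/R}^\infty|(\Psi_t*f)(x-y)|^2\,t^{-2}\,dt\le\int_{1/R}^\infty|(\Psi_t*f)(x-y)|^2\,\frac{dt}{t}\le g_\Psi(f)(x-y)^2$, where I used $t^{-2}=t^{-1}\cdot t^{-1}\le R\,t^{-1}$ on $t\ge 1/R$. Hence the inner factor of Lemma \ref{lem4.1} is at most $\int|K_{R^{-1}}(y)|\,g_\Psi(f)(x-y)\,dy$.

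Finally, since $K$ is Schwartz one has $|K(z)|\le C_N(1+|z|)^{-N}$, so $|K_{R^{-1}}(y)|=R^2|K(Ry)|$ is dominated, uniformly in $R>0$, by a fixed radially decreasing integrable kernel of the approximate-identity type; therefore $\int|K_{R^{-1}}(y)|\,g_\Psi(f)(x-y)\,dy\le C\,M\bigl(g_\Psi(f)\bigr)(x)$ uniformly in $R$. Combining this with Lemma \ref{lem4.1} and taking the supremum over $R>0$ yields $B_*^{\delta+\beta}f(x)\le C_0 C\,M\bigl(g_{\mathscr F^{-1}(\psi^\delta)}f\bigr)(x)$, which is the claim. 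The one step that must be done with care is the use of Minkowski's integral inequality, rather than the Schwarz inequality, when moving the $L^2_s$ average inside the $y$-integration: that is precisely what produces $M$ of the square function instead of the weaker $\bigl(M(g_\Psi(f)^2)\bigr)^{1/2}$; the remaining ingredients are bookkeeping with the dilations and the change of variable $s\mapsto 1/s$.
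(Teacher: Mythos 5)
Your proposal is correct and follows essentially the same route as the paper: Lemma \ref{lem4.1} combined with Minkowski's integral inequality to move the $L^2$ average in $s$ through the convolution with $(\mathscr F^{-1}b)_{R^{-1}}$, the identification of $\widetilde B_s^\delta f$ as a dilate of $\mathscr F^{-1}(\psi^\delta)*f$ so that the average is dominated by $g_{\mathscr F^{-1}(\psi^\delta)}f$, and finally the standard domination of $\sup_{R>0}|(\mathscr F^{-1}b)_{R^{-1}}|*g$ by $CM g$. You merely spell out steps (the change of variables $t=1/s$ and the Schwartz decay of $\mathscr F^{-1}b$) that the paper leaves implicit.
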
 
\begin{proof} 
 By Lemma \ref{lem4.1} and Minkowski's inequality we have 
\begin{multline*}
\left|B^{\delta+\beta}_R f(x)\right|
\\ 
\leq C_{\delta,\beta}
\left(\int_0^1(1-s)^{2(\beta-1)}s^{2\delta}\, ds\right)^{1/2}
\left|(\mathscr F^{-1}b)_{R^{-1}}\right|*\left(R^{-1}\int_0^R
\left|\widetilde{B}_s^\delta f(\cdot)\right|^2\, ds\right)^{1/2}(x).   
\end{multline*} 
Thus 
\begin{equation*}
B^{\delta+\beta}_* f(x)\leq C 
\sup_{R>0}\left(\left|(\mathscr F^{-1}b)_{R^{-1}}
\right|*g_{\mathscr{F}^{-1}(\psi^\delta)}f(x)\right) 
\leq CM(g_{\mathscr{F}^{-1}(\psi^\delta)} f)(x).  
\end{equation*} 
\end{proof} 
\par 
We also consider 
\begin{align*} 
C^\lambda_R f(x)&=\int_{\rho(\xi)>R}\hat{f}(\xi)
b(R^{-1}\xi)(R^{-1}\rho(\xi)-1)_+^\lambda 
e^{2\pi i\langle x, \xi\rangle}\, d\xi
\\ 
&= R^{-\lambda}
\int_{\rho(\xi)>R}\hat{f}(\xi)b(R^{-1}\xi)(\rho(\xi)-R)_+^\lambda 
e^{2\pi i\langle x, \xi\rangle}\, d\xi,     
\end{align*} 
where $\rho$ and $b \in C_0^\infty(\Bbb R^2)$ 
are as in the definition of $B^\lambda_R$. 
Then we have the following. 

\begin{lemma}\label{lem4.3} 
Let $\beta>1/2$, $\delta>-1/2$. 
Suppose that $c_0=\sup_{\xi\in C_H}\rho(\xi')<\infty$, 
with $\xi'=\xi/|\xi|$. Put  $d_0=c_0r_2$.  Then, 
$C^{\delta+\beta}_R f=0$ if $d_0<1$ and if $d_0\geq 1$ we have 
\begin{equation*} 
\left|C^{\delta+\beta}_R f(x)\right|\leq C_{\delta,\beta}
\left(\int_1^{d_0}(s-1)^{2(\beta-1)}s^{2\delta}\, ds\right)^{1/ 2}
\left(R^{-1}\int_R^{d_0R}\left| (\mathscr F^{-1}b)_{R^{-1}}*\widetilde{C}_s^\delta 
f(x)\right|^2\, ds\right)^{1/2},  
\end{equation*} 
where  the constant $C_{\delta,\beta}$ is as in Lemma $\ref{lem4.1}$ and 
\begin{align*} 
\widetilde{C}^\delta_s f(x)=\int_{\rho(\xi)>s}\hat{f}(\xi)
\widetilde{b}(s^{-1}\xi)(s^{-1}\rho(\xi)-1)_+^\delta  
e^{2\pi i\langle x, \xi\rangle}\, d\xi, 
\end{align*} 
where $\widetilde{b}$ is a function in $C^\infty_0(\Bbb R^2)$ such that 
$0\not\in \supp(\widetilde{b}) \subset C_{H}$ 
and such that 
there exists a compact subinterval $H'$ of $H$ for which 
we have $\supp(b) \subset C_{H'}$ and $\widetilde{b}(\xi)=1$ if 
 $d_0^{-1}r_1 \leq |\xi|\leq r_2$ and $\xi \in C_{H'}$. 
\end{lemma}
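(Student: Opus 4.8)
The plan is to adapt the proof of Lemma~\ref{lem4.1} to the exterior region $\{\rho(\xi)>R\}$; the two features not present there are that the auxiliary parameter now ranges over a \emph{bounded} interval and that the cutoff $\widetilde b$ has been renormalised so as to equal $1$ throughout that interval. First, on $\supp\bigl(b(R^{-1}\cdot)\bigr)$ we have $R^{-1}\xi\in\supp(b)\subset\{|\eta|<r_2\}\cap C_H$, hence $\rho(\xi)=|\xi|\,\rho(\xi')\le c_0|\xi|<c_0 r_2 R=d_0R$. Consequently, if $d_0<1$ then $\rho(\xi)<R$ on the whole domain of integration, so $(R^{-1}\rho(\xi)-1)_+^{\delta+\beta}\equiv 0$ and $C^{\delta+\beta}_Rf=0$, as claimed.

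Assume now $d_0\ge 1$. Applying the Beta-integral identity $a^{\delta+\beta}=C_{\delta,\beta}\int_0^a(a-u)^{\beta-1}u^\delta\,du$ ($a\ge 0$, with the same $C_{\delta,\beta}$ as in Lemma~\ref{lem4.1}) to $a=(\rho(\xi)-R)_+$ and substituting $u=\rho(\xi)-s$, one obtains, on $\{\rho(\xi)>R\}$,
\[
(\rho(\xi)-R)^{\delta+\beta}=C_{\delta,\beta}\int_R^{\rho(\xi)}(s-R)^{\beta-1}(\rho(\xi)-s)^\delta\,ds .
\]
Inserting this into $C^{\delta+\beta}_Rf(x)=R^{-\delta-\beta}\int_{\rho(\xi)>R}\hat f(\xi)\,b(R^{-1}\xi)(\rho(\xi)-R)^{\delta+\beta}e^{2\pi i\langle x,\xi\rangle}\,d\xi$ and interchanging the order of integration (legitimate by Fubini--Tonelli: $b(R^{-1}\cdot)$ has compact support, $\beta-1>-1$, and $(s-R)^{\beta-1},(\rho(\xi)-s)^\delta\ge 0$), the $s$-integral runs over $(R,d_0R)$ since $\rho(\xi)<d_0R$ on $\supp(b(R^{-1}\cdot))$, and for each such $s$ the inner integral is over $\{\rho(\xi)>s\}$. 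Writing $(\rho(\xi)-s)^\delta=s^\delta(s^{-1}\rho(\xi)-1)^\delta$, and using that for $s\in(R,d_0R)$ one has $d_0^{-1}r_1<|s^{-1}\xi|<r_2$ and $s^{-1}\xi\in C_{H'}$ on $\supp(b(R^{-1}\cdot))$, whence $\widetilde b(s^{-1}\xi)=1$ there, we insert the factor $\widetilde b(s^{-1}\xi)$ at no cost and identify the inner integral as $(\mathscr F^{-1}b)_{R^{-1}}*\widetilde C^\delta_sf(x)$. This yields
\[
C^{\delta+\beta}_Rf(x)=C_{\delta,\beta}\,R^{-\delta-\beta}\int_R^{d_0R}(s-R)^{\beta-1}s^\delta\,(\mathscr F^{-1}b)_{R^{-1}}*\widetilde C^\delta_sf(x)\,ds .
\]
An application of the Schwarz inequality in $s$, followed by the rescaling $s=Rt$ in the first resulting factor — which contributes $R^{\delta+\beta-1/2}\bigl(\int_1^{d_0}(t-1)^{2(\beta-1)}t^{2\delta}\,dt\bigr)^{1/2}$, the integral being finite exactly because $\beta>1/2$ (on $[1,d_0]$ the factor $t^{2\delta}$ is harmless) — and collecting the powers of $R$ against the prefactor $R^{-\delta-\beta}$, gives the asserted inequality.

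The only step that is not pure bookkeeping is the uniform assertion $\widetilde b(s^{-1}\xi)=1$ for $s\in(R,d_0R)$ and $\xi\in\supp(b(R^{-1}\cdot))$: on that support $Rr_1<|\xi|<Rr_2$, so $|s^{-1}\xi|<r_2$ while $|s^{-1}\xi|>Rr_1/(d_0R)=d_0^{-1}r_1$, and this is precisely why the hypothesis normalises $\widetilde b$ to equal $1$ on $\{\,d_0^{-1}r_1\le|\xi|\le r_2\,\}\cap C_{H'}$, with the lowered radial threshold $d_0^{-1}r_1$ in place of the $r_1$ used in Lemma~\ref{lem4.1}. The remaining ingredients — the Beta-function identity, Fubini, the two substitutions, and the integrability of $(t-1)^{2(\beta-1)}t^{2\delta}$ on $(1,d_0)$ — all parallel the proof of Lemma~\ref{lem4.1}.
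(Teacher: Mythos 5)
Your proposal is correct and follows essentially the same route as the paper: the Beta-function identity for $(\rho(\xi)-R)_+^{\delta+\beta}$, the substitution $u=\rho(\xi)-s$, Fubini, insertion of $\widetilde b(s^{-1}\xi)=1$ on the support, and the Schwarz inequality with the rescaling $s=Rt$. Your additional verifications (the vanishing when $d_0<1$ and the check that $d_0^{-1}r_1<|s^{-1}\xi|<r_2$ justifies the lowered threshold in the normalisation of $\widetilde b$) are consistent with, and slightly more explicit than, the paper's argument.
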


\begin{proof} 
As in the case of $B^\lambda_R$, using the formula 
\begin{equation*} 
(\rho(\xi)-R)_+^{\delta+\beta}= C_{\delta,\beta}\int_0^{(\rho(\xi)-R)_+}
((\rho(\xi)-R)_+-u)^{\beta-1}u^\delta \, du, 
\end{equation*} 
we have 
\begin{multline*} 
C^{\delta+\beta}_R f(x) 
\\ 
= C_{\delta,\beta}R^{-\delta-\beta}
\int_{\rho(\xi)>R}
\int_0^{(\rho(\xi)-R)_+}
((\rho(\xi)-R)_+-u)^{\beta-1}u^\delta \, du\, 
b(R^{-1}\xi)\hat{f}(\xi) 
e^{2\pi i\langle x, \xi\rangle}\, d\xi.      
\end{multline*} 
Changing variables $u=\rho(\xi)-s$, we see that this is equal to 
\begin{align*} 
&C_{\delta,\beta}R^{-\delta-\beta}
\int_{\rho(\xi)>R}
\int^{\rho(\xi)}_{R}
(s-R)^{\beta-1}(\rho(\xi)-s)^\delta \, ds\, 
b(R^{-1}\xi)\hat{f}(\xi) e^{2\pi i\langle x, \xi\rangle}\, d\xi 
\\ 
&= C_{\delta,\beta}R^{-\delta-\beta}
\int_{R}^{d_0R}
\int_{\rho(\xi)>s}
(\rho(\xi)-s)^\delta 
b(R^{-1}\xi)\hat{f}(\xi) e^{2\pi i\langle x, \xi\rangle}\, d\xi\,  
(s-R)^{\beta-1}\, ds 
\\ 
&=C_{\delta,\beta}R^{-\delta-\beta} 
\\ 
&\quad\times 
\int_{R}^{d_0 R}(s-R)^{\beta-1}s^\delta 
\int_{\rho(\xi)>s}
(\rho(s^{-1}\xi)-1)^\delta 
\widetilde{b}(s^{-1}\xi)b(R^{-1}\xi)\hat{f}(\xi) 
e^{2\pi i\langle x, \xi\rangle}\, d\xi  \, ds 
\\ 
&=C_{\delta,\beta}R^{-\delta-\beta}
\int_{R}^{d_0 R}(s-R)^{\beta-1}s^\delta (\mathscr F^{-1} b)_{R^{-1}}*
\widetilde{C}^\delta_s f(x)  \, ds.   
\end{align*} 
By the Schwarz inequality, it follows that  
\begin{align*} 
&\left|C^{\delta+\beta}_R f(x)\right|
\\ 
&\leq  
C_{\delta,\beta}R^{-\delta-\beta}
\left(\int_{R}^{d_0R}(s-R)^{2(\beta-1)}s^{2\delta} \, ds\right)^{1/2}  
\left(\int_{R}^{d_0R}\left|(\mathscr F^{-1} b)_{R^{-1}}*
\widetilde{C}^\delta_s f(x)\right|^2  \, ds\right)^{1/2}
\\ 
&=C_{\delta,\beta}R^{-1/2}
\left(\int_{1}^{d_0}(s-1)^{2(\beta-1)}s^{2\delta} \, ds\right)^{1/2}  
\left(\int_{R}^{d_0R}\left|(\mathscr F^{-1} b)_{R^{-1}}*\widetilde{C}^\delta_s 
f(x)\right|^2  \, ds\right)^{1/2}. 
\end{align*} 
This completes the proof of Lemma \ref{lem4.3}. 
\end{proof}

\begin{corollary} \label{cor4.4} 
Let $\beta, \delta$, $\rho$, $b$, $\widetilde{b}$, $C_R^\lambda$ and 
$\widetilde{C}_R^\lambda$
 be as in Lemma $\ref{lem4.3}$.  Let $C_*^\lambda f(x)
=\sup_{R>0} |C_R^\lambda f(x)|$. Put  
$\varphi^\delta(\xi)=\widetilde{b}(\xi)(\rho(\xi)-1)_+^\delta$.  Then 
\begin{equation*} 
C_*^{\delta+\beta}f(x)\leq C M(g_{\mathscr{F}^{-1}(\varphi^\delta)}f)(x), 
\end{equation*} 
where $M$ denotes the Hardy-Littlewood maximal operator as above. 
\end{corollary}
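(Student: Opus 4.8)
The plan is to mirror the proof of Corollary \ref{cor4.2} almost verbatim, replacing the role of Lemma \ref{lem4.1} with Lemma \ref{lem4.3}. First I would recall the pointwise identity established in Lemma \ref{lem4.3}: for $\beta>1/2$ and $\delta>-1/2$, in the nontrivial case $d_0\geq 1$, we have
\begin{equation*}
\left|C^{\delta+\beta}_R f(x)\right|\leq C_{\delta,\beta}
\left(\int_1^{d_0}(s-1)^{2(\beta-1)}s^{2\delta}\, ds\right)^{1/2}
\left(R^{-1}\int_R^{d_0R}\left|(\mathscr F^{-1}b)_{R^{-1}}*\widetilde{C}_s^\delta f(x)\right|^2\, ds\right)^{1/2}.
\end{equation*}
The first factor is a finite constant depending only on $\delta$ and $\beta$ (the integrand is integrable near $s=1$ precisely because $\beta>1/2$, and the interval of integration is bounded), so it may be absorbed into $C_{\delta,\beta}$. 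When $d_0<1$, Lemma \ref{lem4.3} gives $C^{\delta+\beta}_Rf=0$ and the assertion is trivial.

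Next I would dominate the right-hand factor by a maximal function applied to $g_{\mathscr F^{-1}(\varphi^\delta)}f$. Writing $\varphi^\delta(\xi)=\widetilde b(\xi)(\rho(\xi)-1)_+^\delta$, note that $\widetilde C^\delta_s f=(\varphi^\delta)_s^\vee$-type dilate of $f$ in the precise sense $\mathscr F(\widetilde C^\delta_s f)(\xi)=\varphi^\delta(s^{-1}\xi)\hat f(\xi)$, so that $\widetilde C^\delta_s f = (\mathscr F^{-1}\varphi^\delta)_{s^{-1}}*f$ up to the usual normalization; hence
\begin{equation*}
\left(R^{-1}\int_R^{d_0R}\left|\widetilde C^\delta_s f(x)\right|^2\,ds\right)^{1/2}
\leq \left(\int_0^\infty \left|\widetilde C^\delta_s f(x)\right|^2\,\frac{ds}{s}\cdot\frac{d_0R}{R}\right)^{1/2}
\leq C\, g_{\mathscr F^{-1}(\varphi^\delta)}f(x),
\end{equation*}
using $s\in[R,d_0R]$ so that $1/s\geq 1/(d_0R)$ and $R^{-1}\leq d_0 s^{-1}$ on that range. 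Then Minkowski's integral inequality moves the convolution with $(\mathscr F^{-1}b)_{R^{-1}}$ outside the square-integral in $s$, exactly as in the proof of Corollary \ref{cor4.2}, yielding
\begin{equation*}
\left|C^{\delta+\beta}_R f(x)\right|\leq C_{\delta,\beta}\,\left|(\mathscr F^{-1}b)_{R^{-1}}\right|*g_{\mathscr F^{-1}(\varphi^\delta)}f(x).
\end{equation*}

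Finally, taking the supremum over $R>0$ and using that $\mathscr F^{-1}b\in\mathscr S(\Bbb R^2)$ has an integrable radially decreasing majorant, the standard pointwise bound $\sup_{R>0}\bigl(|(\mathscr F^{-1}b)_{R^{-1}}|*h\bigr)(x)\leq C\,Mh(x)$ (the maximal function controlled by an approximate identity) gives
\begin{equation*}
C^{\delta+\beta}_* f(x)\leq CM\bigl(g_{\mathscr F^{-1}(\varphi^\delta)}f\bigr)(x),
\end{equation*}
which is the assertion of the corollary. I do not anticipate any real obstacle here: every step is a transcription of the argument for Corollary \ref{cor4.2}, and the only point requiring a word of care is that the $s$-integral in Lemma \ref{lem4.3} runs over the bounded range $[R,d_0R]$ rather than $(0,R)$, so one must note that on this range $ds/s$ and $R^{-1}ds$ are comparable with constant $d_0$ before comparing with the full Littlewood--Paley integral defining $g_{\mathscr F^{-1}(\varphi^\delta)}$.
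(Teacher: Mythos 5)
Your proposal is correct and follows essentially the same route as the paper: Lemma \ref{lem4.3}, Minkowski's inequality to pull the convolution with $(\mathscr F^{-1}b)_{R^{-1}}$ outside, comparability of $R^{-1}\,ds$ with $ds/s$ on $[R,d_0R]$ to dominate the inner integral by $g_{\mathscr F^{-1}(\varphi^\delta)}f$, and the standard bound of $\sup_{R>0}|(\mathscr F^{-1}b)_{R^{-1}}|*h$ by $Mh$. The only differences are cosmetic (order of the Minkowski step and the explicit verification that $\mathscr F(\widetilde C^\delta_s f)(\xi)=\varphi^\delta(s^{-1}\xi)\hat f(\xi)$, which the paper leaves implicit).
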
 
\begin{proof} 
 As in the proof of Corollary \ref{cor4.2}, by Lemma \ref{lem4.3} and 
Minkowski's inequality we see that 
\begin{multline*}
\left|C^{\delta+\beta}_R f(x)\right|
\\ 
\leq C_{\delta,\beta}
\left(\int_1^{d_0}(s-1)^{2(\beta-1)}s^{2\delta}\, ds\right)^{1/2}
\left|(\mathscr F^{-1}b)_{R^{-1}}\right|*\left(R^{-1}\int_R^{d_0R}
\left|\widetilde{C}_s^\delta f(\cdot)\right|^2\, ds\right)^{1/2}(x).   
\end{multline*} 
Thus 
\begin{equation*}
C^{\delta+\beta}_* f(x)\leq C_{d_0} 
\sup_{R>0}\left(\left|(\mathscr F^{-1}b)_{R^{-1}}
\right|*g_{\mathscr{F}^{-1}(\varphi^\delta)}f(x)\right) 
\leq CM(g_{\mathscr{F}^{-1}(\varphi^\delta)} f)(x).  
\end{equation*} 
\end{proof} 

\par 
Now we can start the proof of the theorem in the situation stated. 
For $a_1<a_2$ let 
\begin{equation*} 
C(a_1,a_2)=\{\xi\in \Bbb R^2\setminus \{0\}: a_1\xi_2<\xi_1<a_2\xi_2,
\, \xi_2>0\}. 
\end{equation*}  
Take intervals $I_1=[\sigma, \tau]$, $I_2=[\sigma', \tau']$, 
$I_3=[\sigma'', \tau'']$, 
$-b<\sigma''<\sigma'<\sigma<\tau<\tau'<\tau''<b$ such that 
$\supp(\sigma_\lambda) \subset I_1\times \Bbb R$ and 
 $\Psi'>0$ on $I_3$. 
Let a function $\rho$ be defined on $C(a_1'',a_2'')$, 
$a_1''=\sigma''/\psi(\sigma'')$, $a_2''=\tau''/\psi(\tau'')$,   by 
\begin{equation*} 
\rho(u,v)=\frac{u}{\Psi^{-1}(\frac{u}{v})}, 
\end{equation*} 
if $u\neq 0$, where $\Psi: I_3 \to J:=\Psi(I_3)$, $\Psi^{-1}: J \to I_3$; 
also let $\rho(0,v)=v\Psi'(0)=v/\psi(0)$ if $0\in I_3$. 
We note that $\rho$ is non-negative and homogeneous of degree one on 
$C(a_1'',a_2'')$,  and 
$\rho(u, \psi(u))=1$ for $u\in I_3$.  We also note that 
$\Psi^{-1}(s)=(\Psi'(0))^{-1}s + O(s^2)$ near $s=0$ when $0\in J$. 
We can see that $\rho$ is infinitely differentiable in $C(a_1'',a_2'')$.  
Further, by taking account of a suitable partition of unity, 
we may assume that 
$\supp(a)\subset C(a_1,a_2)\cap (I_1\times \Bbb R)$, $a_1=\sigma/\psi(\sigma)$,  $a_2=\tau/\psi(\tau)$,   where $a$ is as in Theorem \ref{thm1.1} 
(see \eqref{e1.1}).  
\par 
 We have 
\begin{equation} \label{e4.1}
\widetilde{a}(\xi)(\rho(\xi)-1)_+^\lambda
=a(\xi)(\xi_2-\psi(\xi_1))_+^\lambda  
\end{equation}
for some $\widetilde{a}\in C_0^\infty(\Bbb R^2)$ with 
$\supp(a)=\supp(\widetilde{a})$.   
This can be seen as follows.  
Since $\partial_2 \rho>0$ on  $C(a_1'',a_2'')$, where 
$\partial_2=\partial/\partial v$, for $\xi\in \supp(a)$ 
we see  that   
\begin{align*} 
a(\xi)(\rho(\xi)-1)_+^\lambda &=
a(\xi)(\rho(\xi)-\rho(\xi_1,\psi(\xi_1)))_+^\lambda 
\\
&=a(\xi)\left(\int_0^1 \frac{\partial}{\partial t}
\rho(\xi_1,t(\xi_2-\psi(\xi_1))+\psi(\xi_1))\, dt\right)_+^\lambda  \notag 
\\ 
&= a(\xi)\left((\xi_2-\psi(\xi_1))\int_0^1 \partial_2 
\rho(\xi_1,t(\xi_2-\psi(\xi_1))+\psi(\xi_1))\, dt\right)_+^\lambda  \notag
\\ 
&= a(\xi)(\xi_2-\psi(\xi_1))_+^\lambda
\left(\int_0^1 \partial_2 
\rho(\xi_1,t(\xi_2-\psi(\xi_1))+\psi(\xi_1))\, dt\right)^\lambda  \notag
\\ 
&= a(\xi)g(\xi)(\xi_2-\psi(\xi_1))_+^\lambda     \notag
\end{align*} 
for some $g\in C_0^\infty(\Bbb R^2)$ such that $g \neq 0$ on $\supp(a)$. 
This implies \eqref{e4.1} by setting $\widetilde{a}=a/g$.     
\par 
Thus to prove Theorem \ref{thm1.1}  it suffices to show that  
$\|T_*^\lambda(f)\|_4 \leq C\|f\|_4$, where 
\begin{equation*} 
T_*^\lambda(f)(x)=\sup_{R>0}\left| 
\int \widetilde{a}(R^{-1}\xi)(\rho(R^{-1}\xi)-1)_+^\lambda  
\hat{f}(\xi)e^{2\pi i\langle x,\xi\rangle}\, d\xi \right|.  
\end{equation*} 
We have 
 $T_*^\lambda(f)\leq CM(g_{\mathscr{F}^{-1}(\varphi^\delta)}(f))$ 
with some $\delta>-1/2$ by Corollary \ref{cor4.4}, where
$\varphi^\delta(\xi)=A(\xi)(\rho(\xi)-1)^\delta_+$ and $A$ is related to 
$\widetilde{a}$ as $\widetilde{b}$ is related to $b$  
in Corollary \ref{cor4.4}; also we can assume that 
$\supp (A) \subset C(a_1,a_2)$.  
Therefore, to prove Theorem \ref{thm1.1} 
it suffices to show that $\|g_{\mathscr{F}^{-1}(\varphi^\delta)}(f)\|_4 
\leq C\|f\|_4$ for $\delta>-1/2$. 
\par  
Let $b_1(\xi_1)\in C_0^\infty(\Bbb R)$ satisfy that $b_1=1$ on $I_2$ and 
$\supp(b_1)\subset I_3$. Decompose $A(\xi)=b_1(\xi_1)A(\xi)+b_2(\xi_1)A(\xi)=: 
A_1(\xi)+A_2(\xi)$, where $b_2=1-b_1$. We observe that the function 
$B_2(\xi):=A_2(\xi)(\rho(\xi)-1)_+^\delta$ belongs to $C_0^\infty(\Bbb R^2)$, 
since $|\rho(\xi)-1|\geq c>0$ on $\supp(B_2)$,  and 
vanishes near the origin. Thus $g_{\mathscr{F}^{-1}(B_2)}$ is bounded on 
$L^p$, $1<p<\infty$. 
Since $\supp (A_1) \subset C(a_1,a_2)\cap(I_3\times \Bbb R)$,  in the same way 
as in \eqref{e4.1} we see that 
\begin{equation*} \label{e4.2}
B_1(\xi):=A_1(\xi)(\rho(\xi)-1)_+^\delta
=\widetilde{A_1}(\xi)(\xi_2-\psi(\xi_1))_+^\delta, 
\end{equation*}
where $\widetilde{A_1}\in C_0^\infty(\Bbb R^2)$ with 
$\supp(A_1)=\supp(\widetilde{A}_1)$.  
Therefore, applying Theorem \ref{thm1.3} 
for the case $\lambda>-1/2$ and $\theta=0$, we have 
the boundedness of $g_{\mathscr{F}^{-1}(B_1)}$, which concludes  
the proof of Theorem \ref{thm1.1} in the case $(B.1)$ with $\Psi'>0$.

\section{ Proofs of Theorems \ref{thm1.1}, \ref{thm1.2}, \ref{thm1.3} 
in full generality}
\label{sec5}

First we prove Theorem \ref{thm1.2} under the general 
conditions stated in the theorem.  
We have already proved the theorem in the case $(B.1)$  
when $\Psi'>0$ on $I$.
\par 
Proof of Theorem \ref{thm1.2} for the case $(B.1)$ with $\Psi'<0$ on $I$. 
We argue similarly to the case $\Psi'>0$. 
Recall that 
$\omega_k=[a_{k-1}, a_k]$, $a_k-a_{k-1}=\delta^{1/2}$, 
 $\cup \omega_k=[-b,b]$,  $H_j=[b_{j-1},b_j]$, $b_j-b_{j-1}=\delta^{1/2}$, 
$\cup H_j=[c,d]$. 
  Define, for $\ell$ with $[a_{\ell-2},a_{\ell+1}]\subset I=[A,B]$,  
\begin{gather*}   
\Delta_\ell=\{(\xi_1,\xi_2): \Psi(a_{\ell})\xi_2\leq \xi_1 
\leq \Psi(a_{\ell-1})\xi_2, \, \xi_2>0\}, \quad 
\widetilde{\Delta}_\ell=\cup_{|\ell'-\ell|\leq 1} \Delta_{\ell'} 
\end{gather*}  
and  recall $S^\ell_h$. 
Also define  $P^{\ell}_h$, $P^{\ell,k,j}_h$ as in Section \ref{sec3} 
by using  $\Delta_\ell$. 
\par 
Decompose $\phi^{(\delta)}$ as in \eqref{e3.2}. 
Arguing as in Section \ref{sec3}, by considering $g_{\mathscr F_i}^{(k)}(f)$, 
$1\leq i\leq4$, $0\leq k\leq L-1$,  
we need to prove estimates \eqref{e3.3}, from which 
we can deduce the desired estimates for $g_\eta(f)$ 
 (see \eqref{e3.4}) by reasoning as in Section \ref{sec3}. 
\par 
We give more specific arguments in the following, focusing on 
the case $k=0$ and $i=1$ in \eqref{e3.3}.  
We can estimate $g_{\mathscr F_1}^{(0)}(f)$ as in Section \ref{sec3} 
and we have an analogue of \eqref{e3.6}: 
\begin{equation}\label{e5.1}
\|g_{\mathscr F_1}^{(0)}(f)\|_4\leq  C\|V(f)\|_4, 
\end{equation} 
where $V(f)$ is defined as in Section \ref{sec3}. 
Also, we have  analogues of Lemma \ref{lem3.2} and Lemma \ref{lem3.3} with 
similar proofs. 
\par   
Let $t\in [2^n,2^{n+1}]$. Then $2^{-n}t\in [1,2]$ and by the analogue of 
Lemma \ref{lem3.2} we have  
\begin{align*} 
s^{(\delta,t)}_\ell(\xi)
=\sum_{k, j,h} 
\chi_{2^{-n}P^{\ell,k,j}_h}(\xi) s^{(\delta,t)}_\ell(\xi)  
\end{align*}    
as in Section \ref{sec3}. 
Using this and applying an analogue of Lemma \ref{lem3.3}, 
as in \eqref{e3.8} we see that  
\begin{equation}\label{e5.2} 
V(f)(x) 
\leq C\delta^{1/2}
\left(\sum_{n,k,j,\ell,h}
 \sup_{t\in [2^n,2^{n+1}]} 
\left|S^{(\delta,t)}_\ell T_{2^{-n}P^{\ell,k,j}_h} f\right|^2\right)^{1/2}, 
\end{equation} 
where $S^{(\delta,t)}_\ell$ is defined in the same way as in 
Section \ref{sec3}. 
\par    
We also have the following result by applying Lemmas \ref{lem2.1} and 
\ref{lem2.2}.  
\begin{proposition}\label{prop5.1} 
Let $\Theta$ and $\beta_0$ be as in Lemma $\ref{lem2.2}$ and \eqref{e2.1}, 
respectively, and let $\alpha$ be as in Lemma $\ref{lem2.1}$. Then 
\begin{equation*} 
\left\|\left(\sum_{n,k,\ell,j,h}\left|T_{2^{-n}P^{\ell,k,j}_h} f\right|^2\right)^{1/2}\right\|_4 \leq 
C\left(\log\frac{1}{\delta}\right)^{(\beta+3\alpha)/2} \|f\|_4, \quad 
\beta=3\Theta+2\beta_0.       
\end{equation*} 
\end{proposition}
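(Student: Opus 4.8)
\textbf{Proof proposal for Proposition \ref{prop5.1}.}
The plan is to mimic the proof of Proposition \ref{prop3.7} essentially verbatim, since the only structural difference between the case $\Psi'>0$ and the case $\Psi'<0$ is the orientation of the sectors $\Delta_\ell$ (the inequalities $\Psi(a_{\ell-1})\xi_2 \le \xi_1 \le \Psi(a_\ell)\xi_2$ are replaced by $\Psi(a_\ell)\xi_2 \le \xi_1 \le \Psi(a_{\ell-1})\xi_2$). First I would record that the analogue of Lemma \ref{lem3.8} holds with the same proof: the key input is only that $|\Psi'(t)| \ge c > 0$ on $I$, which follows from $(A.2)$ regardless of the sign of $\Psi'$, so the number of indices $\ell$ with $\widetilde{\Delta}_\ell \cap (\omega_k \times H_j) \ne \emptyset$ is bounded by an absolute constant. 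The decomposition $\chi_{P^{\ell,k,j}_h}$ of $s^{(\delta,t)}_\ell$ and the disjointness of the $P^{\ell,k,j}_h$ for fixed $\ell$ are likewise unaffected.

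Next I would set $\Omega_{N_*} = \{(1,\psi'(a_k))',\ (a_m,\psi(a_m))' : 0 \le k, m \le N\}$ with $N_* = 2(N+1)$, exactly as before, and run the same chain of four applications of Lemma \ref{lem2.2}. Concretely, writing $T_{2^{-n}P^{\ell,k,j}_h} = T_{2^{-n}\Delta_\ell}\,T_{2^{-n}(\omega_k\times H_j)}\,T_{2^{-n}(\mathbb R\times[c,d])}$ on the relevant frequency piece, one peels off successively: the sum over $h$ (the strips $S^\ell_h$, whose normal direction $(1,\psi'(a_{\ell-1}))'$ lies in $\Omega_{N_*}$) via Lemma \ref{lem2.2}, costing a factor $(1/(s-1))^\Theta$ and replacing $w^s$ by $(M_{\Omega_{N_*}}(w^s))^{1/s}$; then the sum over $\ell$ using Lemma \ref{lem3.8} together with an estimate like \eqref{e2.2}, costing $(1/(s-1))^{2\beta_0}$; then the sums over $j$ and over $k$ by two more applications of Lemma \ref{lem2.2} with the strong maximal operator $M_1$, each costing $(1/(s-1))^\Theta$. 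This produces
\begin{equation*}
\int \sum_{n,k,j,\ell,h}\left|T_{2^{-n}P^{\ell,k,j}_h} f\right|^2 w\, dx
\le C\left(\frac{1}{s-1}\right)^{3\Theta+2\beta_0}
\sum_n \int \left|T_{2^{-n}(\mathbb R\times[c,d])} f\right|^2 (M_1^2 M_{\Omega_{N_*}}^3(w^s))^{1/s}\, dx.
\end{equation*}
After this I would apply the Schwarz inequality and the Littlewood-Paley estimate for the family $\{2^{-n}(\mathbb R\times[c,d])\}$ to bound the $f$-factor by $\|f\|_4^2$, as in \eqref{e3.14}, and then choose $s = 1+(\log N)^{-1}$ and estimate $\left(1/(s-1)\right)^{3\Theta+2\beta_0}\|(M_1^2 M_{\Omega_{N_*}}^3(w^s))^{1/s}\|_2 \le C(\log N)^{\beta+3\alpha}\|w\|_2$ by interpolating Lemma \ref{lem2.1} with the trivial $L^{4/3}$ bound $\|M_{\Omega_{N_*}}\|_{4/3\to 4/3} \le CN$, exactly as in the proof of Proposition \ref{prop3.7}. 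Taking the supremum over $w$ with $\|w\|_2 \le 1$ and recalling $\log N \sim \log(1/\delta)$ yields the claimed bound with exponent $(\beta+3\alpha)/2$.

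I do not expect a genuine obstacle here: the argument is purely a transcription, and the sign of $\Psi'$ enters only through the geometry of the sectors, which does not affect any of the quantitative estimates (Lemma \ref{lem3.8} needs only $|\Psi'| \ge c$, and the directions populating $\Omega_{N_*}$ are the same tangent and ray directions as before). The one point deserving a sentence of care is verifying that the analogue of Lemma \ref{lem3.2}—hence the pointwise decomposition of $s^{(\delta,t)}_\ell$ used to pass to \eqref{e5.2}—indeed holds in the present orientation; but this is again immediate from the fact that $\supp(s^{(\delta,t)}_\ell)$ is contained in a $c\delta \times c\delta^{1/2}$ parallelogram about $t^{-1}(a_{\ell-1},\psi(a_{\ell-1}))$ with long side along $(1,\psi'(a_{\ell-1}))$, which is insensitive to whether $\Psi$ is increasing or decreasing. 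Thus the proof of Proposition \ref{prop5.1} is complete once these routine modifications are noted.
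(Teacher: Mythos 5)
Your proposal is correct and follows essentially the same route as the paper, which proves Proposition \ref{prop5.1} exactly by repeating the proof of Proposition \ref{prop3.7} with the reoriented sectors $\Delta_\ell$, using Lemmas \ref{lem2.1} and \ref{lem2.2} together with Lemma \ref{lem5.2} (the analogue of Lemma \ref{lem3.8}, whose proof again only uses $|\Psi'|\geq c>0$ from $(A.2)$). Your transcription of the weighted duality chain, the choice $s=1+(\log N)^{-1}$, and the interpolation for $M_{\Omega_{N_*}}$ matches the paper's argument, so no further changes are needed (only note that $(1,\psi'(a_{\ell-1}))'$ is the direction parallel to the strips $S^\ell_h$, not their normal).
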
 
The following result is also needed in proving Proposition \ref{prop5.1}. 
\begin{lemma} \label{lem5.2}
Let 
$\omega_k\times H_j\subset [-b,b]\times [c,d]$. Suppose that $\Psi'<0$ on $I$. 
Then there exists a constant $C$ 
independent of $k, j$ such that 
\begin{equation*} 
\card\{\ell: \widetilde{\Delta}_\ell\cap (\omega_k\times H_j)\neq \emptyset\}
\leq C.  
\end{equation*} 
\end{lemma}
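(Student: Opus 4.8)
The statement is the analogue of Lemma \ref{lem3.8} in the case $\Psi'<0$, so the plan is to mimic the argument there, keeping careful track of the orientation change. As in Lemma \ref{lem3.8}, if $\Delta_\ell\cap(\omega_k\times H_j)\neq\emptyset$ then the ray $L_{\alpha_\ell}=\{\xi:\xi_1=\Psi(\alpha_\ell)\xi_2\}$ meets $\omega_k\times H_j$ for some $\alpha_\ell\in[a_{\ell-1},a_\ell]$, and similarly for any other index $m$ with $\Delta_m\cap(\omega_k\times H_j)\neq\emptyset$ we get $\alpha_m\in[a_{m-1},a_m]$ with $L_{\alpha_m}$ meeting the same rectangle. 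The point is that both $\Psi(\alpha_\ell)$ and $\Psi(\alpha_m)$ lie in the interval of slopes $\{\xi_1/\xi_2:\xi\in\omega_k\times H_j\}$, whose length is bounded by $C\delta^{1/2}$ (computed exactly as in the three cases $a_{k-1}\geq0$, $a_k\leq0$, $a_{k-1}<0<a_k$ treated in Lemma \ref{lem3.8}, since that computation does not use the sign of $\Psi'$). Hence $|\Psi(\alpha_\ell)-\Psi(\alpha_m)|\leq C\delta^{1/2}$.

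\textbf{Key steps.} First I would record the slope-interval estimate: in each of the three sign cases for $\omega_k$, the set $\{\xi_1/\xi_2:\xi_1\in\omega_k,\ \xi_2\in H_j\}$ is an interval of length $\leq C\delta^{1/2}$ (with $C$ depending only on $c,d$ and $b$), exactly as in \eqref{e3.13}. Second, I would invoke condition $(A.2)$: since $\psi(t)\neq t\psi'(t)$ on $I$ and $\Psi'(t)=(\psi(t)-t\psi'(t))/\psi(t)^2$, we have $|\Psi'(t)|\geq c>0$ on $I$ — this bound is independent of the sign of $\Psi'$, so it applies verbatim here. Combining the two, $c|\alpha_\ell-\alpha_m|\leq|\Psi(\alpha_\ell)-\Psi(\alpha_m)|\leq C\delta^{1/2}$, whence $|\alpha_\ell-\alpha_m|\leq C\delta^{1/2}$ and therefore $|\ell-m|\leq C$, since consecutive $a_k$ are spaced $\delta^{1/2}$ apart. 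Third, passing from $\Delta_\ell$ to $\widetilde{\Delta}_\ell=\cup_{|\ell'-\ell|\leq1}\Delta_{\ell'}$ only enlarges the index set by a bounded amount, so $\card\{\ell:\widetilde{\Delta}_\ell\cap(\omega_k\times H_j)\neq\emptyset\}\leq C$ as claimed.

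\textbf{Main obstacle.} There is essentially no obstacle: the only place the sign of $\Psi'$ entered Lemma \ref{lem3.8} was through the definition of $\Delta_\ell$ (here $\Psi(a_\ell)\xi_2\leq\xi_1\leq\Psi(a_{\ell-1})\xi_2$ rather than $\Psi(a_{\ell-1})\xi_2\leq\xi_1\leq\Psi(a_\ell)\xi_2$), which merely swaps the roles of the two endpoints in the chain of inequalities bounding $\Psi(\alpha_\ell)$ and does not affect the length estimate or the lower bound $|\Psi'|\geq c$. So the proof is the same as that of Lemma \ref{lem3.8} with the obvious interchange of endpoints, and I would simply write ``The proof is identical to that of Lemma \ref{lem3.8}, with the roles of $a_{k-1}$ and $a_k$ (and of the corresponding endpoints) interchanged, using that $|\Psi'|\geq c>0$ on $I$ by $(A.2)$.''
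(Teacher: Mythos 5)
Your proposal is correct and follows exactly the route the paper takes: the paper's proof of this lemma consists of the single remark that it is similar to the proof of Lemma \ref{lem3.8}, and your write-up simply makes explicit that the slope-interval estimate \eqref{e3.13} and the lower bound $|\Psi'|\geq c$ from $(A.2)$ are insensitive to the sign of $\Psi'$, with only the endpoint roles in the definition of $\Delta_\ell$ interchanged. Nothing further is needed.
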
 
\begin{proof} 
The proof is similar to the one for Lemma \ref{lem3.8}. 
\end{proof}  
\par 
By \eqref{e5.2}, Lemma \ref{lem2.1} and Proposition \ref{prop5.1} we have 
\begin{equation*} 
\|V(f)\|_4 \leq 
C\delta^{1/2}\left(\log\frac{1}{\delta}\right)^{(\beta/2)+2\alpha}\|f\|_4. 
\end{equation*}  
By this and \eqref{e5.1}, we have an analogue of \eqref{e3.3} for $k=0$ and 
$i=1$. Thus we have  estimates for $g_\eta$ analogous to \eqref{e3.4} 
 under the conditions of $(B.1)$ and $\Psi'<0$. 
\par 
Proof of Theorem \ref{thm1.2} for the case $(B.2)$. 
We consider $-\psi$ in place of $\psi$ in the definition of $g_\eta$ and apply 
the result of 
case $(B.1)$. We can see that this proves the result in the case 
$(B.2)$ by changing variables $\xi_2\to -\xi_2$ in $\phi$.  
\par 
Proof of Theorem \ref{thm1.2} for the case $(B.3)$. 
Similarly to Section \ref{sec3}, we decompose $[a, b]$ and $[-d, d]$: 
$[a, b]=\cup \omega_k$, $\omega_k=[a_{k-1}, a_k]$, $|\omega_k|=\delta^{1/2}$; 
$[-d, d]=\cup H_j$, $H_j=[b_{j-1},b_j]$, $|H_j|=\delta^{1/2}$.  
Recall that $\Psi_*(t)=\psi(t)/t$. Then we also have $\Psi_*'\neq 0$ on $I$ by 
$(A.2)$.   
We use $\Psi_*$ in the definition of $\Delta_\ell$ in place of $\Psi$: 
\begin{equation*}
\Delta_\ell=\{(\xi_1,\xi_2): \Psi_*(a_{\ell-1})\xi_1\leq \xi_2 
\leq \Psi_*(a_{\ell})\xi_1, \, \xi_1>0\}
\end{equation*} 
if $\Psi_*'>0$ on $I$ and 
\begin{equation*}
\Delta_\ell=\{(\xi_1,\xi_2): \Psi_*(a_{\ell})\xi_1\leq \xi_2 
\leq \Psi_*(a_{\ell-1})\xi_1, \, \xi_1>0\}
\end{equation*} 
if $\Psi_*'<0$ on $I$. 
Similarly, we also define $\widetilde{\Delta}_\ell$ and 
\begin{gather*} 
S^\ell_h=\{(\xi_1,\xi_2): \psi'(a_{\ell-1})\xi_1 +h\delta \leq \xi_2 
\leq \psi'(a_{\ell-1})\xi_1 +(h+1)\delta \}, \quad h\in \Bbb Z, 
\\ 
P^{\ell}_h=S^\ell_h\cap \widetilde{\Delta}_\ell, \quad 
P^{\ell,k,j}_h=S^\ell_h\cap (\omega_k\times H_j)\cap \widetilde{\Delta}_\ell 
\cap ([a,b] \times \Bbb R). 
\end{gather*} 
Then, arguing similarly to the case $(B.1)$, we can reach the conclusion of 
the theorem in the case $(B.3)$.   
\par 
Proof of Theorem \ref{thm1.2} for the case $(B.4)$. 
Apply the case $(B.3)$ with $\widetilde{\psi}(\xi_1)=\psi(-\xi_1)$ 
in place of $\psi$. Then by change of variables: $\xi_1\to -\xi_1$ 
we have the desired result.   
\par 
This completes the proof of Theorem  \ref{thm1.2} in its full generality.  
\par 
Theorem \ref{thm1.3} follows from Theorem \ref{thm1.2} 
in the same way as the theorem was shown in Section \ref{sec3} 
for the case $(B.1)$ with $\Psi'>0$. 
\par 
Now we prove Theorem \ref{thm1.1}.  
\par 
Proof of Theorem \ref{thm1.1} for the case  $(B.1)$.  
We have already proved the theorem in the case $(B.1)$ 
when $\Psi'>0$ on $I$.
Suppose that $\Psi'<0$ on $I$. We note that $0\notin I$, since if $0\in I$, 
then $\Psi'(0)=1/\psi(0)>0$. 
\par 
Recall that $I_1=[\sigma, \tau]$, $I_2=[\sigma', \tau']$, 
$I_3=[\sigma'', \tau'']$, 
$-b<\sigma''<\sigma'<\sigma<\tau<\tau'<\tau''<b$. 
We may assume that $\supp(\sigma_\lambda) \subset I_1\times \Bbb R$ and  
 $\Psi'<0$ on $I_3$,  $\Psi(t)=t/\psi(t)$.   
Let a function $\rho$ be defined on $C(a_1'',a_2'')$, 
$a_1''=\tau''/\psi(\tau'')$, $a_2''=\sigma''/\psi(\sigma'')$,   by 
\begin{equation*} 
\rho(u,v)=\frac{u}{\Psi^{-1}(\frac{u}{v})}, 
\end{equation*} 
where $\Psi: I_3 \to J:=\Psi(I_3)$, $\Psi^{-1}: J \to I_3$. 
We note that $\rho(u, \psi(u))=1$ for $u\in I_3$.  
Also, we may assume that 
$\supp(a)\subset C(a_1,a_2)\cap (I_1\times \Bbb R)$, $a_1=\tau/\psi(\tau)$, 
$a_2=\sigma/\psi(\sigma)$,  where $a$ is as in Theorem \ref{thm1.1}.  
\par 
 Then we have 
\begin{equation} \label{e5.3}
\widetilde{a}(\xi)(1-\rho(\xi))_+^\lambda
=a(\xi)(\xi_2-\psi(\xi_1))_+^\lambda  
\end{equation}
for some $\widetilde{a}\in C_0^\infty(\Bbb R^2)$ with  
$\supp(a)=\supp(\widetilde{a})$.   
To see this, note that $\partial_2 \rho<0$ on  $C(a_1'',a_2'')$. So, for 
$\xi\in \supp(a)$ we have   
\begin{align*} 
a(\xi)(1-\rho(\xi))_+^\lambda &=
a(\xi)(\rho(\xi_1,\psi(\xi_1))-\rho(\xi))_+^\lambda 
\\
&=a(\xi)\left(\int_0^1 \frac{\partial}{\partial t}
\rho(\xi_1,t(\psi(\xi_1)-\xi_2)+\xi_2)\, dt\right)_+^\lambda  \notag 
\\ 
&= a(\xi)\left((\psi(\xi_1)-\xi_2)\int_0^1 \partial_2 
\rho(\xi_1,t(\psi(\xi_1)-\xi_2)+\xi_2)\, dt\right)_+^\lambda  \notag
\\ 
&= a(\xi)(\xi_2-\psi(\xi_1))_+^\lambda
\left(\int_0^1 (-1)\partial_2 
\rho(\xi_1,t(\psi(\xi_1)-\xi_2)+\xi_2)\, dt\right)^\lambda  \notag
\\ 
&= a(\xi)h(\xi)(\xi_2-\psi(\xi_1))_+^\lambda     \notag
\end{align*} 
for some $h\in C_0^\infty(\Bbb R^2)$ such that $h \neq 0$ on $\supp(a)$. 
This implies \eqref{e5.3} by setting $\widetilde{a}=a/h$.     
\par 
Thus to prove Theorem \ref{thm1.1}  it suffices to show that  
$\|U_*^\lambda(f)\|_4 \leq C\|f\|_4$, where 
\begin{equation*} 
U_*^\lambda(f)(x)=\sup_{R>0}\left| 
\int \widetilde{a}(R^{-1}\xi)(1-\rho(R^{-1}\xi))_+^\lambda  
\hat{f}(\xi)e^{2\pi i\langle x,\xi\rangle}\, d\xi \right|.  
\end{equation*} 
We have $U_*^\lambda(f)
\leq CM(g_{\mathscr{F}^{-1}(\varphi^\delta)}(f))$ 
with some $\delta>-1/2$ by Corollary \ref{cor4.2}, where
$\varphi^\delta(\xi)=A(\xi)(1-\rho(\xi))^\delta_+$ and $A$ is related to 
$\widetilde{a}$ as $\widetilde{b}$ is related to $b$  
in Corollary \ref{cor4.2} with $\supp (A) \subset C(a_1,a_2)$.   
Thus Theorem \ref{thm1.1} follows from the estimates 
 $\|g_{\mathscr{F}^{-1}(\varphi^\delta)}(f)\|_4 
\leq C\|f\|_4$ for $\delta>-1/2$. To prove this we may 
replace $A(\xi)$ with $A_0(\xi)=w(\xi)A(\xi)$ for a suitable 
$w\in C_0^\infty(\Bbb R^2)$,  
since we may assume that 
\begin{equation*} 
\{\rho(\xi)<s, \, \xi\in C(a_1,a_2)\}
\subset \{|\xi|<Cs\}, \quad \forall s>0, 
\end{equation*}   
with some positive constant $C$. 
\par 
Choose $b_1(\xi_1)\in C_0^\infty(\Bbb R)$ such that $b_1=1$ on $I_2$ and 
$\supp(b_1)\subset I_3$ and let $A_0(\xi)=
b_1(\xi_1)A_0(\xi)+b_2(\xi_1)A_0(\xi)=: 
A_1(\xi)+A_2(\xi)$ with $b_2=1-b_1$.
Let $B_i(\xi):=A_i(\xi)(1-\rho(\xi))_+^\delta$, $i=1, 2$.  Then 
$B_2 \in C_0^\infty(\Bbb R^2)$ and 
$\supp(B_2)\subset \Bbb R^2\setminus \{0\}$. 
Thus $g_{\mathscr{F}^{-1}(B_2)}$ is bounded on $L^p$, $1<p<\infty$. 
On the other hand, since $\supp (A_1) \subset C(a_1,a_2)\cap(I_3\times \Bbb R)$,  arguing in the same way as in the proof of \eqref{e5.3},  we see that 
\begin{equation*} \label{e4.2}
B_1(\xi)
=\widetilde{A_1}(\xi)(\xi_2-\psi(\xi_1))_+^\delta, 
\end{equation*}
where $\widetilde{A_1}$ is in $C_0^\infty(\Bbb R^2)$ and 
$\supp(A_1)=\supp(\widetilde{A}_1)$.  
Therefore, by Theorem \ref{thm1.3} 
for the case $\lambda>-1/2$ and $\theta=0$, 
the boundedness of $g_{\mathscr{F}^{-1}(B_1)}$ follows. This completes   
the proof of Theorem \ref{thm1.1} in the case $(B.1)$ with $\Psi'<0$. 
\par 
Proof of Theorem \ref{thm1.1} in the case $(B.2)$. 
We have the following. 
\begin{proposition} \label{prop5.3} 
Let $\psi$, $b$, $\varphi_{\lambda,\theta}$ be as in Theorem $\ref{thm1.3}$. 
Put  
$\Phi_{\lambda,\theta}(\xi)
=b(\xi)\varphi_{\lambda,\theta}(\psi(\xi_1)-\xi_2)$. 
Then, under the same conditions on $\lambda$, $\theta$ as in Theorem 
$\ref{thm1.3}$ we have 
\begin{equation*} 
\|g_{\mathscr{F}^{-1}(\Phi_{\lambda,\theta})}(f)\|_4 \leq C\|f\|_4.   
\end{equation*} 
\end{proposition}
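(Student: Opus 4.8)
\emph{Proof sketch.} The plan is to reduce Proposition \ref{prop5.3} to Theorem \ref{thm1.3} by a reflection in the second coordinate, exactly as the case $(B.2)$ of Theorem \ref{thm1.2} was reduced to the case $(B.1)$. First I would record the elementary fact that the $L^p(\Bbb R^2)$ mapping property of $g_{\mathscr{F}^{-1}(m)}$ is unaffected by replacing the multiplier $m(\xi)$ with $m^\sharp(\xi):=m(\xi_1,-\xi_2)$. Indeed, setting $f^\sharp(x_1,x_2)=f(x_1,-x_2)$ and $h^\sharp=\mathscr{F}^{-1}(m^\sharp)$, one has $h^\sharp(x_1,x_2)=h(x_1,-x_2)$, and since the dilations are isotropic, $(h^\sharp)_t(x_1,x_2)=h_t(x_1,-x_2)$; a change of variables $y_2\to -y_2$ in the convolution then gives $(h^\sharp)_t*f(x_1,x_2)=(h_t*f^\sharp)(x_1,-x_2)$, whence $g_{h^\sharp}(f)(x_1,x_2)=g_{\mathscr{F}^{-1}(m)}(f^\sharp)(x_1,-x_2)$ and therefore $\|g_{\mathscr{F}^{-1}(m^\sharp)}(f)\|_p=\|g_{\mathscr{F}^{-1}(m)}(f^\sharp)\|_p$ with $\|f^\sharp\|_p=\|f\|_p$.

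Next I would apply this with $m=\Phi_{\lambda,\theta}$. Put $\widetilde{\psi}=-\psi$ and $\widetilde{b}(\xi)=b(\xi_1,-\xi_2)$. Then $\widetilde{b}\in C_0^\infty(\Bbb R^2)$ with $\supp(\widetilde{b})\subset I^\circ\times\Bbb R$ and $\widetilde{b}=0$ near the origin, so $\widetilde{b}$ is of the type admitted in \eqref{e1.3}; moreover $\widetilde{\psi}''=-\psi''\neq 0$ on $I$ and $\widetilde{\psi}(t)-t\widetilde{\psi}'(t)=-(\psi(t)-t\psi'(t))\neq 0$, so $\widetilde{\psi}$ satisfies the hypotheses of Theorem \ref{thm1.1}, hence those of Theorem \ref{thm1.3}. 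Since $\varphi_{\lambda,\theta}$ is supported in $(0,\infty)$ and $\psi(\xi_1)+\xi_2=\xi_2-\widetilde{\psi}(\xi_1)$ as real numbers, we obtain the identity of multipliers
\[
\Phi_{\lambda,\theta}(\xi_1,-\xi_2)=b(\xi_1,-\xi_2)\,\varphi_{\lambda,\theta}(\psi(\xi_1)+\xi_2)=\widetilde{b}(\xi)\,\varphi_{\lambda,\theta}(\xi_2-\widetilde{\psi}(\xi_1)),
\]
and the right-hand side is precisely the multiplier $\Psi_{\lambda,\theta}$ of Theorem \ref{thm1.3} built from the pair $(\widetilde{\psi},\widetilde{b})$.

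Finally I would invoke Theorem \ref{thm1.3} for this pair: the exponents $\lambda,\theta$ satisfy the same conditions, and the threshold $\tau$ appearing there is a universal constant (it is $(\beta/2)+2\alpha+1$ with $\beta=3\Theta+2\beta_0$, depending only on the exponents $\alpha$, $\beta_0$, $\Theta$ of Lemma \ref{lem2.1}, \eqref{e2.1} and Lemma \ref{lem2.2}, not on $\psi$), so $\|g_{\mathscr{F}^{-1}(\Phi_{\lambda,\theta}^\sharp)}(f)\|_4\leq C\|f\|_4$. The reflection identity of the first step then yields $\|g_{\mathscr{F}^{-1}(\Phi_{\lambda,\theta})}(f)\|_4\leq C\|f\|_4$, as claimed. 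There is no genuine analytic obstacle here; the only points requiring care are the verification that all the structural assumptions on $b$ and $\psi$ (the support conditions, $(A.2)$, and $\psi''\neq 0$) survive the substitution $\psi\mapsto-\psi$, $b(\xi)\mapsto b(\xi_1,-\xi_2)$, and the observation that the isotropic dilations in the definition of $g$ commute with the reflection $\xi_2\mapsto-\xi_2$. If one wishes to avoid citing Theorem \ref{thm1.3} in its full generality, the same reflection reduces Proposition \ref{prop5.3} directly to the cases $(B.1)$ with $\Psi'>0$ and with $\Psi'<0$ treated in Sections \ref{sec3} and \ref{sec5}.
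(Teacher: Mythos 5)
Your argument is correct, but it is not the route the paper takes. The paper disposes of Proposition \ref{prop5.3} in one line: it repeats the proof of Theorem \ref{thm1.3}, i.e.\ it decomposes $\varphi_{\lambda,\theta}(\psi(\xi_1)-\xi_2)$ dyadically in the variable $\psi(\xi_1)-\xi_2$ and applies Theorem \ref{thm1.2} to each piece, using the fact that Theorem \ref{thm1.2} only requires $\supp(\Phi)\subset[-1,1]$, so a cutoff living on the side $\xi_2<\psi(\xi_1)$ of the curve is already covered. You instead treat Theorem \ref{thm1.3} as a black box and reduce to it by the reflection $\xi_2\mapsto-\xi_2$, $\psi\mapsto-\psi$, $b(\xi)\mapsto b(\xi_1,-\xi_2)$ --- which is exactly the device the paper itself uses to pass from case $(B.1)$ to case $(B.2)$ of Theorems \ref{thm1.2} and \ref{thm1.1}, just deployed at a different point. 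Your verifications are the right ones and they all check out: the reflection commutes with the isotropic dilations so $\|g_{\mathscr{F}^{-1}(m^\sharp)}(f)\|_p=\|g_{\mathscr{F}^{-1}(m)}(f^\sharp)\|_p$; the hypotheses $\psi''\neq0$ and $(A.2)$ are odd in $\psi$ and hence preserved; the multiplier identity $\Phi_{\lambda,\theta}(\xi_1,-\xi_2)=b(\xi_1,-\xi_2)\varphi_{\lambda,\theta}(\xi_2-(-\psi)(\xi_1))$ is immediate; and, importantly, the threshold $\tau$ in Theorem \ref{thm1.3} is universal (built from $\alpha$, $\beta_0$, $\Theta$), so ``the same conditions on $\lambda,\theta$'' transfer. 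There is also no circularity, since Theorem \ref{thm1.3} is established in full generality earlier in Section \ref{sec5} than the point where Proposition \ref{prop5.3} is invoked. What each approach buys: yours avoids re-running the decomposition argument entirely and isolates the symmetry cleanly; the paper's avoids having to check that the structural hypotheses survive the substitution, at the cost of silently re-using the proof of Theorem \ref{thm1.3} with the sign of $\xi_2-\psi(\xi_1)$ reversed. Only one small caution: your notation $\widetilde{b}$ for the reflected cutoff collides with the $\widetilde{b}$ of Lemma \ref{lem4.1} and Corollary \ref{cor4.2}; rename it if this text is to sit near Section \ref{sec4}.
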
 
This can be shown by using Theorem \ref{thm1.2} in the same way as 
Theorem \ref{thm1.3} is proved. 
Arguing as in the proof of Theorem \ref{thm1.1} for 
the case $(B.1)$ and using Proposition \ref{prop5.3}  
for the case $\lambda>-1/2$ and $\theta=0$, 
we can prove the following.

\begin{proposition} \label{prop5.4} 
In the case $(B.1)$ it holds that  
\begin{equation*} 
\|\widetilde{S}^\lambda_*f\|_4\leq C_\lambda\|f\|_4
\end{equation*} 
for $\lambda>0$, where 
\begin{equation*} 
\widetilde{S}^\lambda_*f(x)=\sup_{R>0}\left|\widetilde{S}^\lambda_Rf(x)\right|, \quad 
\widetilde{S}^\lambda_Rf(x)=\int_{\Bbb R^2} 
\widetilde{\sigma}_\lambda(R^{-1}\xi) 
\hat{f}(\xi)e^{2\pi i\langle x, \xi\rangle}\, d\xi,    
\end{equation*} 
 $\widetilde{\sigma}_\lambda(\xi)=
a(\xi)(\psi(\xi_1)-\xi_2)_+^\lambda$
 and $a$ is as in  Theorem $\ref{thm1.1}$. 
\end{proposition}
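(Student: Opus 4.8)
The plan is to run the proof of Theorem~\ref{thm1.1} in the case $(B.1)$ essentially verbatim, with Proposition~\ref{prop5.3} playing the role that Theorem~\ref{thm1.3} played there. By a partition of unity and $(A.2)$ I would first reduce to the two sub-cases $\Psi'>0$ on $I$ and $\Psi'<0$ on $I$; in the latter $0\notin I$, because $\Psi'(0)=1/\psi(0)>0$ forces $\Psi'>0$ near the origin.

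In each sub-case I would introduce, exactly as in Sections~\ref{sec4} and~\ref{sec5}, the function $\rho(u,v)=u/\Psi^{-1}(u/v)$ (with $\rho(0,v)=v/\psi(0)$ when $0\in I_3$), homogeneous of degree one and smooth on a cone $C(a_1'',a_2'')$, with $\rho(u,\psi(u))=1$ on $I_3$ and $\partial_2\rho>0$ (resp. $\partial_2\rho<0$) when $\Psi'>0$ (resp. $\Psi'<0$). On $\supp(a)$ the condition $\psi(\xi_1)-\xi_2>0$ is then equivalent to $\rho(\xi)<1$ in the first sub-case and to $\rho(\xi)>1$ in the second, and the fundamental-theorem-of-calculus computation used for \eqref{e4.1} and \eqref{e5.3} (writing $\rho(\xi_1,\psi(\xi_1))-\rho(\xi)=(\psi(\xi_1)-\xi_2)\int_0^1\partial_2\rho(\xi_1,\xi_2+t(\psi(\xi_1)-\xi_2))\,dt$ and pulling out the smooth, non-vanishing factor) gives $\widetilde a\in C_0^\infty(\Bbb R^2)$ with $\supp(\widetilde a)=\supp(a)$ such that $\widetilde a(\xi)(1-\rho(\xi))_+^\lambda=a(\xi)(\psi(\xi_1)-\xi_2)_+^\lambda$ in the first sub-case and $\widetilde a(\xi)(\rho(\xi)-1)_+^\lambda=a(\xi)(\psi(\xi_1)-\xi_2)_+^\lambda$ in the second.

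Next I would reduce the maximal bound to a square-function bound. Writing $\lambda=\delta+\beta$ with $1/2<\beta<\lambda+1/2$, so $\delta>-1/2$, Corollary~\ref{cor4.2} in the first sub-case (and Corollary~\ref{cor4.4} in the second) yields $\widetilde S^\lambda_*f\leq CM(g_{\mathscr F^{-1}(\varphi^\delta)}f)$ with $\varphi^\delta(\xi)=A(\xi)(1-\rho(\xi))_+^\delta$ (resp. $A(\xi)(\rho(\xi)-1)_+^\delta$), $\supp(A)\subset C(a_1,a_2)$; since $M$ is $L^4$-bounded it suffices to prove $\|g_{\mathscr F^{-1}(\varphi^\delta)}f\|_4\leq C\|f\|_4$. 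Splitting $A=A_1+A_2$ with $A_1=b_1(\xi_1)A(\xi)$, $b_1\equiv1$ on $I_2$, $\supp(b_1)\subset I_3$: the piece $A_2(1-\rho)_+^\delta$ (resp. $A_2(\rho-1)_+^\delta$) is a $C_0^\infty$ multiplier vanishing near the origin, because $|1-\rho|\geq c>0$ on its support, so its $g$-function is $L^p$-bounded for $1<p<\infty$; and since $\supp(A_1)\subset C(a_1,a_2)\cap(I_3\times\Bbb R)$, the factorization of the previous step applied with $\delta$ in place of $\lambda$ gives $A_1(\xi)(1-\rho(\xi))_+^\delta=\widetilde{A_1}(\xi)(\psi(\xi_1)-\xi_2)_+^\delta$ (resp. with $\rho-1$) for some $\widetilde{A_1}\in C_0^\infty(\Bbb R^2)$, whence Proposition~\ref{prop5.3} with exponent $\delta>-1/2$ and $\theta=0$ gives the desired $L^4$ estimate. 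In the first sub-case one first replaces $A$ by $w(\xi)A(\xi)$ for a suitable cutoff $w$, using $\{\rho(\xi)<s,\ \xi\in C(a_1,a_2)\}\subset\{|\xi|<Cs\}$ as in Section~\ref{sec5}; in the second sub-case $\rho$ is bounded on unit vectors of the cone, so Corollary~\ref{cor4.4} applies with finite $d_0$. Summing over the partition of unity and the two sub-cases finishes the proof; case $(B.2)$ of Theorem~\ref{thm1.1} then follows by applying Proposition~\ref{prop5.4} with $-\psi$ in place of $\psi$ and the change of variables $\xi_2\mapsto-\xi_2$.

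I do not anticipate a genuine obstacle: the construction of $\rho$, the factorization identity, the reduction from the maximal operator to $g_{\mathscr F^{-1}(\varphi^\delta)}$ via Corollaries~\ref{cor4.2}--\ref{cor4.4}, and the localization argument are all already carried out in Sections~\ref{sec4}--\ref{sec5} for the companion multiplier $(\xi_2-\psi(\xi_1))_+^\lambda$. The only points needing care are the sign bookkeeping---whether the relevant region sits on the side $\rho<1$ or $\rho>1$, hence whether $B^\lambda_R$ or $C^\lambda_R$ is the operator to compare against---and confirming that Proposition~\ref{prop5.3}, obtained by the same argument as Theorem~\ref{thm1.3}, does cover the exponent range ($\delta>-1/2$, $\theta=0$) that the reduction requires.
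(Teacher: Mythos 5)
Your proposal is correct and follows essentially the same route as the paper, which proves Proposition \ref{prop5.4} by repeating the proof of Theorem \ref{thm1.1} in case $(B.1)$ with Proposition \ref{prop5.3} (for $\lambda>-1/2$, $\theta=0$) in place of Theorem \ref{thm1.3}. Your sign bookkeeping is the right one: since $(\psi(\xi_1)-\xi_2)_+$ corresponds to $(1-\rho)_+$ when $\Psi'>0$ and to $(\rho-1)_+$ when $\Psi'<0$, the roles of Corollaries \ref{cor4.2} and \ref{cor4.4} are exactly interchanged relative to the proof of Theorem \ref{thm1.1}, and the auxiliary cutoff $w$ (resp.\ the finiteness of $d_0$) is needed precisely in the sub-case where the $B$-type (resp.\ $C$-type) operator is used.
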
 
By applying Proposition \ref{prop5.4} to $-\psi$ and by changing variables 
$\xi_2\to -\xi_2$ we can prove Theorem \ref{thm1.1} in the case $(B.2)$.  
\par 
Proof of Theorem \ref{thm1.1} in the case $(B.3)$.   
The proof is similar to that for the case $(B.1)$. In the proof we apply 
Theorem \ref{thm1.3} and the homogeneous function $\rho$ needed in applying 
Corollaries \ref{cor4.2}, \ref{cor4.4} 
is defined by using $\Psi_*(t)=\psi(t)/t$; it is of the form 
\begin{equation*} 
\rho(u,v)=\frac{u}{\Psi_*^{-1}\left(\frac{v}{u}\right)}.   
\end{equation*} 
\par 
Proof of Theorem \ref{thm1.1} in the case $(B.4)$.   
We apply results of the case $(B.3)$ to $\widetilde{\psi}(\xi_1)=\psi(-\xi_1)$ 
and apply the change of variables: $\xi_1 \to -\xi_1$.

\section{Proof of Theorem $\ref{thm1.5+}$}\label{sec6}  

It is sufficient to treat separately the cases $(B.i)$, $1\leq i\leq 4$. 
In each case we argue similarly as follows. 
\par 
By the Plancherel theorem we see that 
\begin{equation}\label{e.6.1}  
\|g_\eta(f)\|_2^2=\int_{\Bbb R^2}\int_0^\infty |\phi(t\xi)|^2\, \frac{dt}{t} 
|\hat{f}(\xi)|^2\, d\xi.  
\end{equation} 
Fix $\xi \in \Bbb R^2$.  Suppose that $t_0\xi\in \supp(\phi)$. Then by 
Lemma \ref{lem3.4} there exists $s_0\in [1-B_1\delta, 1+B_1\delta]$ such that 
$t_0\xi\in s_0\Gamma$.   Thus $\xi\in t_0^{-1}s_0\Gamma$.  If a positive 
number $t$ satisfies that $t\xi \in \supp(\phi)$, Lemma \ref{lem3.4} implies 
that $t\xi\in s\Gamma$ for some $s \in [1-B_1\delta, 1+B_1\delta]$. 
Since we have also $t\xi\in tt_0^{-1}s_0\Gamma$, it follow that  
$tt_0^{-1}s_0=s$ by Lemma \ref{lem3.5}. Thus we see that 
$tt_0^{-1}s_0 \in [1-B_1\delta, 1+B_1\delta]$, which implies that 
$t\in t_0s_0^{-1}[1-B_1\delta, 1+B_1\delta]$.  
Therefore we see that 
\begin{align*}
\int_0^\infty |\phi(t\xi)|^2\, \frac{dt}{t}&\leq 
\int_{t_0s_0^{-1}(1-B_1\delta)}^{t_0s_0^{-1}(1+B_1\delta)} |\phi(t\xi)|^2\, 
\frac{dt}{t}
\leq \|b\|_\infty^2\|\Phi\|_\infty^2\log\frac{1+B_1\delta}{1-B_1\delta} 
\\ 
&\leq C\|b\|_\infty^2\|\Phi\|_\infty^2\delta.  
\end{align*} 
Thus 
\begin{equation}\label{e.6.2} 
 \int_0^\infty |\phi(t\xi)|^2\, \frac{dt}{t}\leq  
C\|b\|_\infty^2\|\Phi\|_\infty^2\delta.
\end{equation} 
This is also true when there is no $t_0$ such that  $t_0\xi\in \supp(\phi)$. 
Thus \eqref{e.6.2} holds for all $\xi\in \Bbb R^2$. 
Using \eqref{e.6.2} in \eqref{e.6.1} and applying the Plancherel theorem 
again, we can obtain the conclusion of Theorem $\ref{thm1.5+}$.  
\par 
To conclude this note, 
we recall some results from \cite{Sa}. 
Let $\sigma_\lambda$, $\lambda>0$, and $S^\lambda_{R}f$ be as in  
\eqref{e1.1} and \eqref{e1.2}, respectively, 
where $\psi''$ is allowed to have a finite number of zeros of finite order in 
$I$.  The conditions $(A.1)$, $(A,2)$ need not be assumed. 
Then  the following vector valued inequality was proved in \cite{Sa}. 

\begin{theorem}\label{thm5.5} 
Let $\{R_\ell\}_{\ell=1}^\infty$ be a sequence of positive numbers and let   
 $p\in [4/3, 4]$.  Then  we have 
\begin{equation*}  
\left\|\left(\sum_{\ell=1}^\infty 
\left|S^\lambda_{R_\ell}f_\ell\right|^2\right)^{1/2}\right\|_p \leq 
C_\lambda \left\|\left(\sum_{\ell=1}^\infty |f_\ell|^2\right)^{1/2}\right\|_p.  
\end{equation*}  
\end{theorem}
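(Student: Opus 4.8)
The plan is to derive the vector valued bound from the scalar square function estimates already available, using the Stein--Weiss subordination together with the Fefferman--Stein vector valued maximal inequality. First I would cut down the range of $p$. The adjoint of $S^\lambda_{R_\ell}$ with respect to the $L^2$ pairing is the Fourier multiplier operator with symbol $\overline{\sigma_\lambda(R_\ell^{-1}\xi)}=\overline{a(R_\ell^{-1}\xi)}(R_\ell^{-1}\xi_2-\psi(R_\ell^{-1}\xi_1))_+^\lambda$, which is of exactly the same form (with $a$ replaced by $\bar a$, satisfying the same hypotheses). Hence, by duality of $L^{4/3}(\ell^2)$ against $L^4(\ell^2)$, the $L^{4/3}$ estimate follows from the $L^4$ estimate, and interpolation between $p=4$ and $p=4/3$ gives the full range $p\in[4/3,4]$. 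So it suffices to treat $p=4$; the restriction to $p\le 4$ (and dually to $p\ge 4/3$) is inherited from the sharp range of the scalar square function estimate in the critical regime below.

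For $p=4$, write $\lambda=\delta+\beta$ with $\beta>1/2$ and $\delta=\lambda-\beta>-1/2$, which is possible since $\lambda>0$. By a partition of unity in $\xi$ I would reduce to pieces on which either the symbol is smooth and compactly supported away from $\Gamma$ and the origin --- in which case the corresponding maximal operator is pointwise dominated by $Mf$ and the vector valued bound is immediate from Fefferman--Stein --- or else the curve admits a homogeneous extension $\rho$ of degree one of the sort constructed in Section \ref{sec4} (built from $\Psi(t)=t/\psi(t)$ or $\Psi_*(t)=\psi(t)/t$). On such a piece, exactly as in \eqref{e4.1}, \eqref{e5.3} and Corollaries \ref{cor4.2} and \ref{cor4.4}, one has
\begin{equation*}
|S^\lambda_{R_\ell}f_\ell(x)|\le \sup_{R>0}|S^\lambda_Rf_\ell(x)|\le C\,M\bigl(g_{\mathscr{F}^{-1}(\varphi^\delta)}f_\ell\bigr)(x),\qquad \varphi^\delta(\xi)=A(\xi)\bigl(\rho(\xi)-1\bigr)_+^\delta,
\end{equation*}
with $C$ independent of $\ell$ and of $R_\ell$. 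Squaring, summing in $\ell$, and applying the Fefferman--Stein inequality for $M$, the matter is reduced to the $\ell^2$-valued square function estimate
\begin{equation*}
\left\|\left(\sum_{\ell}\bigl|g_{\mathscr{F}^{-1}(\varphi^\delta)}f_\ell\bigr|^2\right)^{1/2}\right\|_4\le C\left\|\left(\sum_{\ell}|f_\ell|^2\right)^{1/2}\right\|_4,
\end{equation*}
which, after undoing the change of variables $\xi_2-\psi(\xi_1)\leftrightarrow\rho(\xi)-1$, is the $\ell^2$-valued analogue of Corollary \ref{cor1.8} (equivalently of Theorem \ref{thm1.3} with $\theta=0$ and with $\delta>-1/2$ in the role of $\lambda$).

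I expect this last step to be the main obstacle. Since $\delta$ may be taken arbitrarily close to $-1/2$, the $L^2(dt/t)$-valued convolution kernel defining $g_{\mathscr{F}^{-1}(\varphi^\delta)}$ is too singular for the standard Banach valued Calder\'on--Zygmund theory to upgrade the scalar $L^4$ bound to the $\ell^2$-valued one for free; instead I would re-run the proof of Section \ref{sec3} with $\ell^2$-valued functions $(f_m)_m$ in place of a scalar $f$. The Plancherel step, the bilinear splitting into $Q_1+Q_2$ via \cite[Lemma 5.3]{Sa} (leading to \eqref{e3.5}), the reduction \eqref{e3.6}, the support geometry of Lemmas \ref{lem3.1}--\ref{lem3.8}, and the Kakeya and strong maximal function estimates of Lemmas \ref{lem2.1}--\ref{lem2.2} and Proposition \ref{prop3.7} all accommodate an extra summation index $m$ without structural change, since the vector $(f_m)_m$ only enters after an application of Cauchy--Schwarz and Plancherel; concretely, Proposition \ref{prop3.7} is applied to the family $\{T_{2^{-n}P^{\ell,k,j}_h}f_m\}$ with the index $m$ kept inside the $\ell^2$ norm throughout. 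Finally, since $\psi''$ is now only assumed to vanish to finite order at finitely many points, one localizes further near those points, where after a change of variables the curve is modelled on $\xi_2=c\,\xi_1^{\,\kappa}$ and the corresponding square function bound is obtained on dyadic frequency annuli and summed, again as in \cite{Sa}. Combining these with the first paragraph proves Theorem \ref{thm5.5}.
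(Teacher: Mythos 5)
Your opening reduction (duality of $L^{4/3}(\ell^2)$ against $L^4(\ell^2)$ using that the adjoints are multipliers of the same form, then interpolation) is fine, and an $\ell^2$-valued rerun of Section \ref{sec3} is plausible in the curvature-nondegenerate case. But there is a genuine gap at the heart of the argument: Theorem \ref{thm5.5} is stated \emph{without} the hypotheses $(A.1)$, $(A.2)$ and with $\psi''$ allowed to vanish at finitely many points of finite order, whereas your central step, the pointwise domination $|S^\lambda_{R_\ell}f_\ell|\leq S^\lambda_*f_\ell\leq CM\bigl(g_{\mathscr F^{-1}(\varphi^\delta)}f_\ell\bigr)$ via Corollaries \ref{cor4.2}, \ref{cor4.4} and Theorem \ref{thm1.3}, needs exactly those hypotheses. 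Without $(A.2)$ the homogeneous gauge $\rho$ of Section \ref{sec4} does not exist on the part of $\supp(a)$ where the tangent line to $\Gamma$ passes through the origin, so the dichotomy you set up after the partition of unity ("away from $\Gamma$, or a homogeneous extension exists") is false; and without $\psi''\neq 0$ the $L^4$ square function bound itself is unavailable, since it rests on \cite[Lemma 5.3]{Sa} through \eqref{e3.5}.

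Your proposed repair for the degenerate points --- localize, rescale to the model $\xi_2=c\,\xi_1^{\kappa}$ on dyadic annuli, and sum "as in \cite{Sa}" --- does not go through once you have passed to the maximal operator. The whole point of the quantifier structure in Theorem \ref{thm5.5} is that each $f_\ell$ is paired with one fixed dilate $R_\ell$, so each operator is a single Fourier multiplier which can be decomposed near the degenerate points and anisotropically rescaled with summable norms; this is H\"ormander's idea from \cite{Ho}, and it is precisely how the theorem is obtained in \cite{Sa} from the nondegenerate case (the present paper does not reprove it; it quotes it). Your very first inequality $|S^\lambda_{R_\ell}f_\ell|\leq S^\lambda_*f_\ell$ discards this structure, and the paper states explicitly at the end of Section \ref{sec6} that the H\"ormander argument cannot be applied to the maximal operator under the hypotheses of Corollary \ref{cor5.6}; if your route worked, it would yield exactly such a maximal bound, which is beyond what is known. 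So the argument proves the claim only under the extra assumptions $\psi''\neq 0$, $(A.1)$, $(A.2)$, not in the generality asserted.
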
 
Using a special case of this, 
we can prove the following lacunary maximal theorem 
(see \cite[Remark 6.2]{Sa}).  
\begin{corollary} \label{cor5.6}
Let $\{R_\ell\}_{\ell=-\infty}^\infty$ be a sequence of positive numbers 
such that $1<q\leq \inf_\ell R_{\ell+1}/R_\ell$.  
Suppose that $S^\lambda_{R}f$ and $\sigma_\lambda$ are as in Theorem 
$\ref{thm5.5}$ and that the condition $(A.1)$ holds. 
Then we have 
\begin{equation*} \label{e.1.4}  
\left\|\sup_\ell\left|S^\lambda_{R_\ell}f\right|\right\|_p 
\leq C_\lambda\|f\|_p, \quad 4/3\leq p\leq 4.  
\end{equation*}
\end{corollary}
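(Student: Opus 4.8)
The plan is to convert the lacunary maximal operator into an $\ell^2$-valued operator to which the vector valued inequality of Theorem~\ref{thm5.5} applies, after first reproducing each $S^\lambda_{R_\ell}f$ from a Littlewood--Paley piece of $f$ adapted to the scale $R_\ell$. First I would use $(A.1)$ together with $a\in C_0^\infty(\Bbb R^2)$ to fix numbers $0<r_1<r_2<\infty$ with $\supp(\sigma_\lambda)\subset\{\xi:r_1\le|\xi|\le r_2\}$ (this is exactly the point where $(A.1)$ is needed: it separates $\supp(\sigma_\lambda)$ from the origin). Choose a radial $\Upsilon\in C_0^\infty(\Bbb R^2)$ with $\Upsilon\equiv1$ on $\{r_1\le|\xi|\le r_2\}$ and $\supp(\Upsilon)\subset\{r_1/2\le|\xi|\le 2r_2\}$, and set $\Upsilon_Rf=\mathscr F^{-1}\bigl(\Upsilon(R^{-1}\cdot)\hat f\bigr)$. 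Since $\sigma_\lambda(R^{-1}\xi)$ is supported where $\Upsilon(R^{-1}\xi)=1$, one has $S^\lambda_Rf=S^\lambda_R(\Upsilon_Rf)$ for every $R>0$; hence pointwise
\[
\sup_\ell\bigl|S^\lambda_{R_\ell}f\bigr|
=\sup_\ell\bigl|S^\lambda_{R_\ell}(\Upsilon_{R_\ell}f)\bigr|
\le\Bigl(\sum_\ell\bigl|S^\lambda_{R_\ell}(\Upsilon_{R_\ell}f)\bigr|^2\Bigr)^{1/2}.
\]
Applying Theorem~\ref{thm5.5} (which holds verbatim for any countable index set, in particular for $\ell\in\Bbb Z$) with $f_\ell=\Upsilon_{R_\ell}f$ yields, for $4/3\le p\le4$,
\[
\bigl\|\sup_\ell|S^\lambda_{R_\ell}f|\bigr\|_p\le C_\lambda\,\Bigl\|\Bigl(\sum_\ell|\Upsilon_{R_\ell}f|^2\Bigr)^{1/2}\Bigr\|_p.
\]

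It then remains to prove the Littlewood--Paley type bound
\[
\Bigl\|\Bigl(\sum_\ell|\Upsilon_{R_\ell}f|^2\Bigr)^{1/2}\Bigr\|_p\le C_{p,q}\,\|f\|_p,\qquad 1<p<\infty,
\]
using only the lacunarity $\inf_\ell R_{\ell+1}/R_\ell\ge q>1$. Here $\Upsilon(R_\ell^{-1}\xi)$ is supported in $\{R_\ell r_1/2\le|\xi|\le 2R_\ell r_2\}$, so for a fixed $\xi$ at most $1+\log(4r_2/r_1)/\log q$ values of $\ell$ contribute; consequently $\sum_\ell|\Upsilon(R_\ell^{-1}\xi)|\le C_q$, and more generally
\[
\Bigl|\partial^\gamma\Bigl(\sum_\ell\varepsilon_\ell\,\Upsilon(R_\ell^{-1}\xi)\Bigr)\Bigr|\le C_{q,\gamma}\,|\xi|^{-|\gamma|}
\]
uniformly over sign sequences $\varepsilon_\ell\in\{-1,1\}$, since on the support of the $\ell$-th term $R_\ell\sim|\xi|$ and $\bigl|\partial^\gamma[\Upsilon(R_\ell^{-1}\xi)]\bigr|\le C_\gamma R_\ell^{-|\gamma|}$. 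By the H\"ormander--Mikhlin multiplier theorem the operator $f\mapsto\sum_\ell\varepsilon_\ell\Upsilon_{R_\ell}f$ is bounded on $L^p$, $1<p<\infty$, uniformly in the signs, and Khintchine's inequality turns this into the displayed square function estimate. Combining it with the previous display proves Corollary~\ref{cor5.6}.

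I expect the only genuine care to be in the bookkeeping of constants in this last step: one must verify that the Littlewood--Paley constant depends solely on $p$, $q$ and on $r_1,r_2$ (equivalently, on $\sigma_\lambda$), so that the resulting bound has the form claimed, the $q$-dependence being harmless since $q$ is part of the hypothesis. It is also worth recording that the range $4/3\le p\le4$ in the conclusion is precisely the one inherited from Theorem~\ref{thm5.5}, the reproducing step and the Littlewood--Paley step both being available on the full range $1<p<\infty$.
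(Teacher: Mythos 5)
Your proposal is correct and is essentially the standard derivation the paper has in mind: the paper itself gives no details, deferring to \cite[Remark 6.2]{Sa}, and the intended argument is exactly yours — use $(A.1)$ to insert a reproducing cut-off $\Upsilon_{R_\ell}$ at each scale, dominate the supremum by the $\ell^2$ sum, apply Theorem \ref{thm5.5} with $f_\ell=\Upsilon_{R_\ell}f$, and finish with the lacunary Littlewood--Paley square function bound (uniform H\"ormander--Mikhlin multipliers plus Khintchine), whose constant depends only on $p$, $q$, $r_1$, $r_2$.
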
 
Theorem \ref{thm5.5} was shown in \cite{Sa} from the results for the case when 
$\psi''\neq 0$ by an idea of H\"{o}rmander in \cite{Ho} and it was applied to 
show Corollary \ref{cor5.6} (see \cite{Sjo} for related results). 
Unfortunately, we cannot apply the idea to prove an analogue of 
Theorem \ref{thm1.1}  for $S^\lambda_{R}f$ with $\sigma_\lambda$ defined 
under the conditions of Corollary \ref{cor5.6}.

\end{document}